\numberwithin{equation}{section}
\theoremstyle{plain}
\newtheorem{lemma}[equation]{Lemma}
\newtheorem{theorem}[equation]{Theorem}
\newtheorem{proposition}[equation]{Proposition}
\newtheorem{corollary}[equation]{Corollary}
\newtheorem{remark}[equation]{Remark}
\theoremstyle{definition}
\newtheorem{definition}[equation]{Definition}
\newtheorem{example}[equation]{Example}
\newcommand{\R}{\mathbb{R}}
\newcommand{\Z}{\mathbb{Z}}
\newcommand{\C}{\mathcal{C}}
\newcommand{\D}{\mathcal{D}}
\newcommand{\N}{\mathbb{N}}
\newcommand{\ann}{\mathcal{A}}
\newcommand{\ball}{\mathcal{B}}
\newcommand\xqed[1]{%
  \leavevmode\unskip\penalty9999 \hbox{}\nobreak\hfill
  \quad\hbox{#1}}
\newcommand\qee{\xqed{$\fullmoon$}}
\newcommand{\pow}{\mathcal{P}}
\newcommand{\follow}{\mathcal{F}}
\newcommand{\pat}{\mathscr{P}}
\newcommand{\qat}{\mathscr{Q}}
\newcommand{\lang}{\mathscr{L}}
\newcommand{\Cay}{\textrm{Cay}}
\newcommand{\colo}{\;:\;}
\title{Avoshifts}
\author{
Ville Salo \\
vosalo@utu.fi
}
\begin{document}
\maketitle

\begin{abstract}
An avoshift is a subshift where for each set $C$ from a suitable family of subsets of the shift group, the set of all possible valid extensions of a globally valid pattern on $C$ to the identity element is determined by a bounded subpattern. This property is shared (for various families of sets $C$) by for example cellwise quasigroup shifts, TEP subshifts, and subshifts of finite type with a safe symbol. In this paper we concentrate on avoshifts on polycyclic groups, when the sets $C$ are what we call ``inductive intervals''. We show that then avoshifts are a recursively enumerable subset of subshifts of finite type. Furthermore, we can effectively compute lower-dimensional projective subdynamics and certain factors (avofactors), and we can decide equality and inclusion for subshifts in this class. These results were previously known for group shifts, but our class also covers many non-algebraic examples as well as many SFTs without dense periodic points. The theory also yields new proofs of decidability of inclusion for SFTs on free groups, and SFTness of subshifts with the topological strong spatial mixing property.
\end{abstract}


\section{Introduction}

Let $\Sigma$ be a finite set called the \emph{alphabet} and $G$ a discrete group. A \emph{subshift of finite type} or \emph{SFT} $X \subset \Sigma^G$ is a set closed under the \emph{shift maps} $\sigma_g(x)_h = gx_h = x_{g^{-1}h}$, and which is defined by removing exactly the \emph{points} $x \in \Sigma^G$ whose orbit hits a particular clopen set $F$, or equivalently contain a forbidden pattern from a finite set. 

It is well-known that one-dimensional SFTs (the case $G = \Z$) behave in a much nicer and uniform way than multidimensional SFTs ($G = \Z^d, d \geq 2$). As two examples of nice dynamical properties that hold in one dimension, topologically transitive subshifts of finite type in one dimension have a unique measure of maximal entropy and have dense periodic points (these fail for general subshifts of finite type, but in completely manageable ways). Furthermore, SFTs (and even their subshift factors called \emph{sofic shifts}) can be algorithmically compared for equality and inclusion. See e.g.\ the standard reference \cite{LiMa21}.

In higher dimensions, even under strong irreducibility (a strong mixing/niceness assumption), subshifts may have multiple measures of maximal entropy \cite{BuSt94}, $\Z^3$-SFTs are not known to have dense periodic points (see e.g.\ \cite{Ho24}), and it is not even possible to algorithmically tell whether a given subshift of finite type is empty \cite{Be66}. For this reason, a recurring endeavor in the field of symbolic dynamics is to try to find classes of multidimensional subshifts which capture some of the interesting phenomena from the two-dimensional setting, but where at least some properties are decidable or otherwise understandable.

Group shifts -- groups where the elements are points of the subshift, and group operations are continuous and shift-commuting -- are the most successful such a class. They are of finite type when the shift group $G$ is polycyclic \cite{Sc95}. They have dense periodic points at least on abelian groups \cite{KiSc88}. They also of course have a ``most uniform'' measure, namely the Haar measure. Many of their basic properties (inclusions, and even some dynamical properties) are decidable \cite{BeKa24}. An extensive theory of group shifts (with emphasis on the abelian case) is developed in \cite{Sc95}.

In \cite{Sa22d}, the author introduced the so-called $k$-TEP subshifts. They share some properties of group shifts and one-dimensional SFTs: there is a natural measure (the measure that ``samples patterns uniformly on convex sets''). They have dense periodic points, and there are also some decidable properties, for example inclusion is easily seen to be decidable for this class.\footnote{The last results are not explicitly stated in \cite{Sa22d}, but are obvious corollaries of results proved therein; in any case, they follow from the results proved in the present paper.} 

Different mixing assumptions can have useful implications also in higher dimensions (even if usually not as nice as in one dimension). Strong irreducibility implies dense periodic points at least for two-dimensional SFTs \cite{Li03,Li04}. Two stronger mixing properties are the existence of a \emph{safe symbol}, and the more general \emph{topological strong spatial mixing}. Here, we say $0 \in \Sigma$ is a safe symbol for an SFT $X$ if turning a non-$0$ symbol to $0$ in a valid configuration can never introduce a forbidden pattern, for the latter property see Definition~\ref{def:TSSM}. All SFTs in these classes dense periodic points, and thus their languages and inclusion are decidable. 

In the present paper, we define a new class of subshifts called \emph{avoshifts}. This class is defined with respect to a family of subsets of the acting shift group $G$, and the definition is that if a partial configuration $x \in \Sigma^C$ (with $C$ in the specified class of shapes) extends to a complete configuration of the subshift, then the set of valid extensions to the identity element is determined by a finite subpattern of the configuration, which is bounded as a function of $x$, but not necessarily $C$.


We consider the class of avoshifts for a quite restricted\footnote{This is a good thing -- the smaller the class of shapes, the larger the class of avoshifts.} class of shapes called inductive intervals. With this choice, avoshifts generalize (cellwise quasi-)group shifts, $k$-TEP subshifts (on $\Z^d$ with the standard convex geometry). It also covers all SFTs with a safe symbol, and topological strong spatial mixing in fact precisely coincides with the avo property for the class of all possible shapes. Inductive intervals make sense on any polycyclic group, and depend on the chosen subnormal series.


The following is our main ``practical'' result.

\begin{theorem}
\label{thm:Main}
Let $G$ be polycyclic, and let $X \subset A^G$ be an avoshift for inductive intervals. Then
\begin{itemize}
\item $X$ is of finite type,
\item the language of $X$ can be computed algorithmically (uniformly in $X$),
\item $X \subset Y$ can be decided for any given SFT $Y$,
\item forbidden patterns for the restrictions of $X$ to the groups in the subnormal series (a.k.a.\ projective subdynamics) can be effectively computed,
\item ``avofactors'' can be effectively computed,
\item $X$ is uniformly SFT on inductive intervals, and the corresponding forbidden patterns can be computed, and
\item we can algorithmically semidecide that $X$ is an avoshift from its forbidden patterns.
\end{itemize}
\end{theorem}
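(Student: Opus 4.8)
The plan is to prove each bullet of Theorem~\ref{thm:Main} in turn, since they are really separate claims bundled together, and the key conceptual tool is the defining ''avo'' property: for $C$ an inductive interval, the valid extensions of a globally admissible pattern on $C$ to the identity are determined by a bounded subpattern. My strategy throughout is to exploit the inductive structure of the shape family, peeling off one term of the subnormal series at a time so that statements on $G$ reduce to statements about a subgroup of smaller Hirsch length, together with a one-dimensional ($\Z$ or finite cyclic) ''outer'' direction.

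Let me sketch the reductions. First, for the SFT claim: the bounded-determination property should directly produce a finite set of forbidden patterns. The idea is that the avo property gives, for each inductive interval $C$ and each globally valid $x \in \Sigma^C$, a uniform radius $r$ such that the admissible symbols at the identity depend only on $x$ restricted to a ball of radius $r$; collecting these local constraints over all relevant $C$ yields finitely many forbidden patterns defining $X$, and one must check this finite family actually cuts out exactly $X$ (no spurious points appear) by an induction over the series. For computing the language uniformly, I would run an induction on Hirsch length: the restriction of $X$ to the smaller subgroup in the series is itself an avoshift of one lower ''dimension'' (this is the projective-subdynamics bullet, which I would prove first or simultaneously), so by induction its language is computable, and then the avo property lets me extend patterns one coset at a time, each extension being a finite computation because the set of valid continuations is bounded-determined. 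Deciding $X \subset Y$ for an SFT $Y$ then follows: having computed enough of the language of $X$, I check that no forbidden pattern of $Y$ occurs, which is a finite check once we know $X$ is SFT and can enumerate its globally valid patterns on large enough windows.

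For the avofactors bullet, I would show the avo property is preserved under the appropriate factor construction, so the image is again an avoshift for inductive intervals and its forbidden patterns can be computed by pushing forward the computed language of $X$. The ''uniformly SFT on inductive intervals'' claim is essentially a uniform-in-$C$ version of the first bullet: I would extract, from the avo property together with the compactness that the bounded radius provides, a single finite window size that works simultaneously for every inductive interval, and read off the forbidden patterns from the language computation. Finally, for semideciding that a given SFT $X$ is an avoshift from its forbidden patterns: the avo property over inductive intervals is, for each fixed $C$ and each bound $r$, a statement quantifying over globally valid patterns, which becomes checkable once the language is computable; so I would enumerate candidate radii and, for each, verify the determination property on the (computable) language, halting when a valid radius is found -- this gives a semidecision procedure.

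The main obstacle, I expect, is the inductive step that reduces $G$ to a subgroup in the subnormal series while preserving the avo property -- in other words, showing that projective subdynamics of an avoshift for inductive intervals is again an avoshift for inductive intervals, with effectively computable data. This is delicate because inductive intervals depend on the chosen subnormal series, and one must verify that intersecting an inductive interval of $G$ with the subgroup yields an inductive interval there, and that the bounded-determination radius descends in an effective and uniform way. Once this closure-under-projective-subdynamics lemma is in place with explicit bounds, the remaining bullets become bookkeeping: each is a finite computation layered on top of the inductive hypothesis, and the decidability statements follow from the computability of the language together with the finiteness guaranteed by the SFT property.
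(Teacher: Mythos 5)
Your overall architecture differs from the paper's in a way that exposes two genuine gaps. First, the uniformization step. The avo property as defined gives a determining set $B \Subset C$ that depends on the inductive interval $C$; to get finitely many forbidden patterns (and a single window) you need one avoradius working for \emph{all} inductive intervals simultaneously. You invoke ``compactness'' for this, but topological compactness of the family of shapes is not enough: the paper's argument (Lemma~\ref{lem:CommonRadius}) needs that inductive intervals form a \emph{well-quasi-order} under inclusion and are closed under increasing unions, so that a hypothetical sequence of intervals with unbounded avoradii can be refined to an increasing chain whose union is again an inductive interval, to which the (non-uniform) avo hypothesis applies and yields a contradiction. Without the wqo and union-closure facts (Lemmas~\ref{lem:wqo} and~\ref{lem:Union}), the passage from avo to uniformly avo --- on which every subsequent bullet rests --- does not go through. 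Relatedly, your verification that the finite family ``cuts out exactly $X$'' needs a well-ordering of $G$ all of whose proper prefixes are translates of inductive intervals (constructibility, Lemma~\ref{lem:IIWellOrder}); this is a transfinite induction of order type roughly $\omega^n$ with limit stages handled by compactness of $X$, not an induction on the subnormal series.

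Second, and more seriously, your plan has a circular dependency. You propose to compute the language by induction on Hirsch length, using the projective subdynamics $X|H_{n-1}$ as the inductive input --- but computing forbidden patterns for $X|H_{n-1}$ from a presentation of $X$ is precisely the hard projective-subdynamics problem (undecidable for general $\Z^2$-SFTs), and the paper only solves it \emph{after} obtaining a finite family $\qat$ that defines $X$ uniformly on all inductive intervals (then $H_i$, being itself an inductive interval, inherits its defining patterns by restriction). Likewise you propose to ``read off the forbidden patterns from the language computation'' while the language computation itself (Lemma~\ref{lem:DecidableLanguage}) takes the uniformly-defining family as input. The paper breaks this circle with an enumerate-and-verify scheme (Lemma~\ref{lem:ComputingPatterns}): enumerate all finite pattern families $\qat$ defining the same SFT (equality of SFTs is semidecidable), and for each one semidecide, using upper semicomputability and \emph{extendability} of the family of inductive intervals plus a compactness lemma, that $\qat$ defines the restrictions uniformly. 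This search-and-certify step is the algorithmic heart of the theorem and is what also yields the last bullet (semideciding the avo property via goodness of the family and Lemma~\ref{lem:SFTimpliesAvo}); your proposal has no substitute for it. The bullets on inclusion and on avofactors are otherwise handled in essentially the paper's way once these two gaps are filled.
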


Avofactors are factors that can be expressed as projections from a product that satisfies a similar property as that defining avoshifts. This includes all factors given by algebraic maps between group shifts.

The first five results (see below for a discussion of the sixth) were previously known for group shifts (at least for abelian groups). The first is proved in \cite{Ki87,Sc95} (in the former on $\Z$, in the latter for general polycyclic groups), the second and third are from \cite{KiSc88}, and the fourth and fifth are from \cite{BeKa24}.

An important takeaway are that avoshifts on polycyclic groups satisfy the basic decidability properties, and SFTness, of group shifts. Notably, our proof of these results uses the group structure ``only once'', and the only fact used is ``equal extension counts'' (see Section~\ref{sec:EEC} for the definition) for all finite patterns of the same shape, which is trivially true for (cellwise quasi)group shifts. For group shifts, both \cite{KiSc88,BeKa24} strongly use Wang's algorithm \cite{Wa61}, which is based on the density of periodic points. Avoshifts do not have dense periodic points, and our methods are not related to Wang's algorithm in any obvious way.

The fact that avoshifts are of finite type does not seem to generalize beyond polycyclic groups \cite{Sa18e}, but the other properties are really consequences of the uniform avo property (meaning the set of valid extensions of $x \in\Sigma^C$ to the identity element is determined by a subpattern which is bounded as a function of $C$ and $x$), which in the case of polycyclic groups and inductive intervals is automatic. We illustrate this by giving an avoshift proof of the well-known fact that SFTs on free groups have decidable languages and decidable inclusion, by showing that SFTs are characterized by the uniform avo property on the tree convex sets from \cite{Sa22d}. 

The sixth property in Theorem~\ref{thm:Main} means that there is a finite set of forbidden patterns such that for $C \subset G$ in the family, configurations that are locally legal on $C$ are globally legal (similarly to the main property of $k$-TEP subshifts described in the first bullet point of \cite[Theorem~1.1]{Sa22d}). This is closely tied with the uniform avo property.

We also make the following two observations that may be of interest despite not having decidability implications: The topological Markov property \cite{BaGoMaTa20} is equivalent to being avo for the class of cofinite sets (Proposition~\ref{prop:TMP}). The property of being topologically $k$-mixing for all $k$ can be stated equivalently as being uniformly SFT on the sets of cardinality $n$ for all $n$.

A lengthy discussion of practicality of these methods, possible extensions of the theory, and future work is included in Section~\ref{sec:Discussion}.

\section{Definitions}

We have $0 \in \N$. We denote the power set of a set $S$ by $\pow(S)$. A subset (not necessarily proper) is written $A \subset B$. A finite subset is written $A \Subset B \iff A \subset B \wedge |A| < \infty$. We write $\Z_k$ for the cyclic group on $k$ elements, for $k$ finite. By $\vec 0$ we denote the zero vector of $\Z^d$ for any $d$. 

Throughout, $G$ will denote a group. Our groups are for the most part finitely generated and countable. The identity element is denoted by $e_G$ or just $e$. We typically think of $G$ as acting on itself by left translation. We consider the group to carry a generating set (usually called $S$ when a name is needed, but this symbol is not reserved for this purpose) and the resulting left-invariant word metric (typically called $d$). The \emph{ball of radius $n$} or \emph{$n$-ball} is $\ball_n = \{g \in G \colo d(e_G, g) \leq n\}$. Specifically this is the metric closed ball of radius $n$ around the identity element, but $g\ball_n$ is the metric ball around the element $g$ so we do not need special notation for it. We usually have in mind the right Cayley graph $\Cay(G, S)$ with nodes $G$ and edges $(g, gs)$ for $s \in S$. Subsets of $G$ are sometimes called \emph{areas} or \emph{shapes}. For $A, B \subset G$ we write $d(A, B) = \min\{d(a, b) \colo a \in A, b \in B\}$.

An \emph{alphabet} is a finite (discrete) set $\Sigma$. A \emph{subshift} is $X \subset \Sigma^G$ which is topologically closed ($\Sigma^G$ having the product topology) and closed under the \emph{shift} or \emph{translation} action of $G$ defined by formula $gx_h = x_{g^{-1}h}$, where in turn $x_h$ is the notation for indexing $x$ at $h$. Note that $\Sigma^G$ is compact, and thus so are subshifts.

The subsets $\pow(G)$ also carry the standard Fell topology (through identification of a set $S \subset G$ with its indicator function in $\{0,1\}^G$, this is the same as the topology used for subshifts). Subsets of $G$ are also considered as an ordered set with inclusion as the order, and $S_i \nearrow S$ means $\forall i: S_i \subset S_{i+1} \wedge \bigcup_i S_i = S$.

In this paper, we use the terminology that a \emph{point} is always an element of $\Sigma^G$ for a group $G$, a \emph{configuration} is any element of $\Sigma^D$ where $D \subset G$ for some group $G$ (possibly a point, possibly $D \subsetneq G$), and a \emph{pattern} is a configuration $x \in \Sigma^D$ whose domain $D$ is finite. We sometimes use the term ``finite pattern'' (meaning pattern), and ``partial configuration'' (meaning configuration) for emphasis, and a point is sometimes referred to as a ``complete configuration''. A \emph{$D$-pattern} is pattern with domain $D$. We mostly use $x, y, z$ to refer to points and configurations, and $P, Q, R$ to refer to patterns.

If $x \in \Sigma^D$ and $C \subset D$, we write restriction as $x|C \in \Sigma^C$, as we tend to have non-trivial formulas for the sets $C$ that we prefer not to have in a subscript. If $X \subset \Sigma^D$, $X|C = \{x|C \colo x \in X\}$. We say a configuration $x \in \Sigma^C$ is a \emph{subconfiguration} of $y \in \Sigma^D$ if $C \subset D$ and $y|C = x$. If $C$ is the set of elements of $D$ at some distance from the origin, we also call $x$ a \emph{prefix} of $y$. We say two configurations $x \in \Sigma^C, y \in \Sigma^D$ \emph{agree} on $B \subset C \cap D$ if $x|B = y|B$.

For a symbol $a \in \Sigma$ and $g \in G$, write $a^g$ for the pattern $P \in \Sigma^{\{g\}}$ with $P_g = a$. We sometimes identify $a$ with the pattern $a^e$. For two patterns $P \in \Sigma^C, Q \in \Sigma^D$ write $P \sqcup Q$ for the pattern $R \in \Sigma^{C \cup D}$ defined by $R|C = P, R|D = Q$, when such a pattern exists. If $P \in \Sigma^D$ and $g \in G$, then $gP \in \Sigma^{gD}$ is the configuration defined by $gP_{gh} = P_h$ (equivalently $gP_h = P_{g^{-1}h}$, so shifting a point $x \in \Sigma^G$ is a special case of this definition). The \emph{empty pattern} is the unique pattern with domain $\emptyset$.

We say $x$ \emph{occurs} or \emph{appears} in $y$ if $gx$ is a subconfiguration of $y$ for some $g \in G$; equivalently we say $y$ \emph{contains} $x$. We denote this by $x \sqsubset y$. We write $x \sqsubset X$ (and use the previous three terms as well) if $x \sqsubset y$ for some $y \in X$. We write $\lang(X) = \{P \colo P \sqsubset X\}$ for the \emph{language} of $X$. Note that the empty pattern is in the language of $X$ if and only $X$ is nonempty.

A \emph{subshift of finite type} or \emph{SFT} is $X \subset \Sigma^G$ defined by $X = \{x \in \Sigma^G \colo \forall g \in G: gx \notin C\}$, where $C \subset \Sigma^G$ is clopen. A clopen set is finite union of cylinders $[P] = \{x \in A^G \colo x|D = P\}$ where $P : D \to A$ for $D \Subset G$ (the notation means finite subset). The patterns $P$ giving the cylinders comprising $F$ are called the \emph{forbidden patterns}. A \emph{window} for an SFT $X \subset \Sigma^G$ is any $M \Subset G$ such that there is a set of defining forbidden patterns whose domains are contained in $M$. A \emph{window size} is any $m$ such that $\ball_m$ is a window. Note that on a group, we can always pick $M$ to be the same for all the finitely many forbidden patterns, but it will be necessary later (when we later talk about being SFT on a subset of the group) to allow for the forbidden patterns to have different domains.

If $M \Subset G$ and $X \subset \Sigma^G$ is a subshift, the \emph{$M$-SFT approximation} of $X$ is the SFT $Y \subset \Sigma^G$ defined by
\[ Y = \{y \in \Sigma^G \colo \forall g \in G: \exists x \in X: gy|M = x|M\}. \]
Equivalently, this is the set of configurations defined by forbidding precisely the patterns $P \in \Sigma^M$ which do not appear in $X$. 

Suppose a group $G$ and a subshift $X \subset \Sigma^G$ are clear from context. Then we say a configuration $x \in \Sigma^C$ is \emph{globally valid} or \emph{globally legal} if $x \in X|C$. In the case where $X$ is an SFT, and $\pat$ is a defining set of forbidden patterns, then $x \in \Sigma^C$ is \emph{locally $\pat$-valid} or \emph{locally $\pat$-legal} if no $P \in \pat$ appears in $x$ (with $\pat$ omitted if clear from context). Globally valid configurations are of course locally valid, and locally valid points (complete configurations) are globally valid.

If $X \subset \Sigma^G$ is a subshift and $x \in \Sigma^D$ for $D \subset G$, then the \emph{follower set} $\follow(x, E) = \follow_X(x, E)$ is the set of configurations $z \in \Sigma^E$ such that for some $y \in X$ we have $y|D = x \wedge y|E = z$, or equivalently $x \sqcup z \in X|D \cup E$ (and $\sqcup$ is well-defined). We refer to configurations $z \sqcup x \in X|D \cup E$ as the \emph{extensions} of $x$ to $E$. If $E$ is a singleton (indeed even if $E = \{g\} \neq \{e\}$), we may simply write the unique element of $\Sigma$ in this place to refer to an extension. 

We say a subshift $X \subset \Sigma^G$ is \emph{topologically mixing} if for any nonempty open $U, V \subset X$, for any large enough $g$ (in word norm) there is a point $x \in X$ with $x \in U, gx \in V$. More generally \emph{topological $k$-mixing} means for any nonempty open $U_1, \ldots, U_k$ there exists $n$ such that for any $(g_1, g_2, \ldots, g_k) \in G^k$ with $d(g_i, g_j) > n$ for $i \neq j$, there exists $x \in X$ such that $g_ix \in U_i$ for all $i$. In the introduction we used the term \emph{mixing/gluing property}; by this we refer to notions that in some (intuitive, informal) sense generalize the idea of topological mixing.

If $X, Y \subset \Sigma^G$ are subshifts, a \emph{conjugacy} is a shift-commuting homeomorphism $\phi : X \to Y$, and we say $X$ and $Y$ are \emph{conjugate} in this case. A \emph{factor map} is a shift-commuting continuous surjection $\phi : X \to Y$ and we say $X$ \emph{covers} $Y$ or $Y$ is a \emph{factor} of $X$ in this case. A factor of an SFT is called \emph{sofic} (note that in this paper, a factor is always a subshift).

If $\Sigma$ itself is a finite group, then a subshift $X \subset \Sigma^G$ is called a \emph{group shift} if it is closed under the operations obtained from those of $\Sigma$ by applying them cellwise. Group shifts are up to conjugacy the same as subshifts on which one can define a group structure by shift-invariant continuous operations \cite{Ki87,SaTo12e,SaTo22}. More generally, for any universal algebraic structure on $\Sigma$, we can talk about subshifts with this structure by applying the operations cellwise.

\section{Avoshifts and basic properties}

We begin with the precise definition of an avoshift. This is given for general groups and families of 
subsets of groups. 


\begin{definition}
Let $G$ be a group. Let $X \subset \Sigma^G$ be a subshift. We say $X$ is \emph{avo} for $C \subset G \setminus \{e_G\}$ 
(or \emph{$C$-avo}, or a \emph{$C$-avoshift}) if 
there exists $B\Subset C$ such that for all $x \in X|C$ we have
\[ x|B \sqcup a^e \sqsubset X \iff x \sqcup a^e \sqsubset X. \]
In this case we say $B$ is \emph{determining for $C$}, or \emph{$C$-determining}. Similarly we say $C$ is \emph{determined by $B$}, or \emph{$B$-determined}.
Let $\C \subset \pow(G \setminus \{e_G\})$. We say $X$ is \emph{$\C$-avo}, or a \emph{$\C$-avoshift} if it is avo for all $C \in \C$. 
\end{definition}



Note that to say $B$ is $C$-determining is a bit of an abuse of language: $C$-determining means that $B$ determines the possible continuations of (patterns with domain) $C$ to the identity, not that $B$ determines $C$ in any way. We use this terminology as it is concise, and hopefully not too confusing.


\begin{example}
\label{ex:Convex}
In the case of $G = \Z^d$, one natural family $\C$ we could use comes from Euclidean convexity. If $\D$ is the set of intersections $C \cap \Z^d$ where $C \subset \R^d$ is convex (for simplicity we refer to such subsets of $\Z^d$ as convex as well), then we take $\C$ to be the set of all $C \in \D$ such that also $C \setminus \{\vec 0\} \in \D$. For this choice, being an avoshift on $\Z^d$ equivalently means that if we take a convex subset $C \not\ni \vec 0$ of $\Z^d$, and add a new element $\vec v$ so that $C \cup \{\vec v\}$ stays convex, then for globally valid patterns $x$ on $C$, the set of valid extensions to $\vec v$ is determined by looking at only a bounded subpattern of $x$ (bounded as a function of $x$). This is the natural family to use for the $k$-TEP subshifts from \cite{Sa22d}, discussed in Section~\ref{sec:Examples}. \qee
\end{example}

\begin{example}
The class we concentrate on in the present paper is ``inductive intervals''. In $\Z^d$, an \emph{inductive interval} is defined by including all elements $\vec v \in \Z^d$ where the last coordinate is in a particular interval of one of the forms
\[ (-\infty, -1], [-m, -1], \emptyset, [1, m], [1, \infty) \]
or if it is zero, then the projection to the first $d-1$ coordinates belongs to an inductive interval of $\Z^{d-1}$. These are a proper subset of the convex sets defined in the previous example, so the corresponding notion of avoshifts covers a larger class of subshifts. A typical inductive interval for $\Z^3$ is illustrated in Figure~\ref{fig:minecraft}. \qee
\end{example}

We call these inductive intervals because they are defined inductively, and we prove most of their properties by induction.

\begin{figure}
\centering
\includegraphics[scale=0.25]{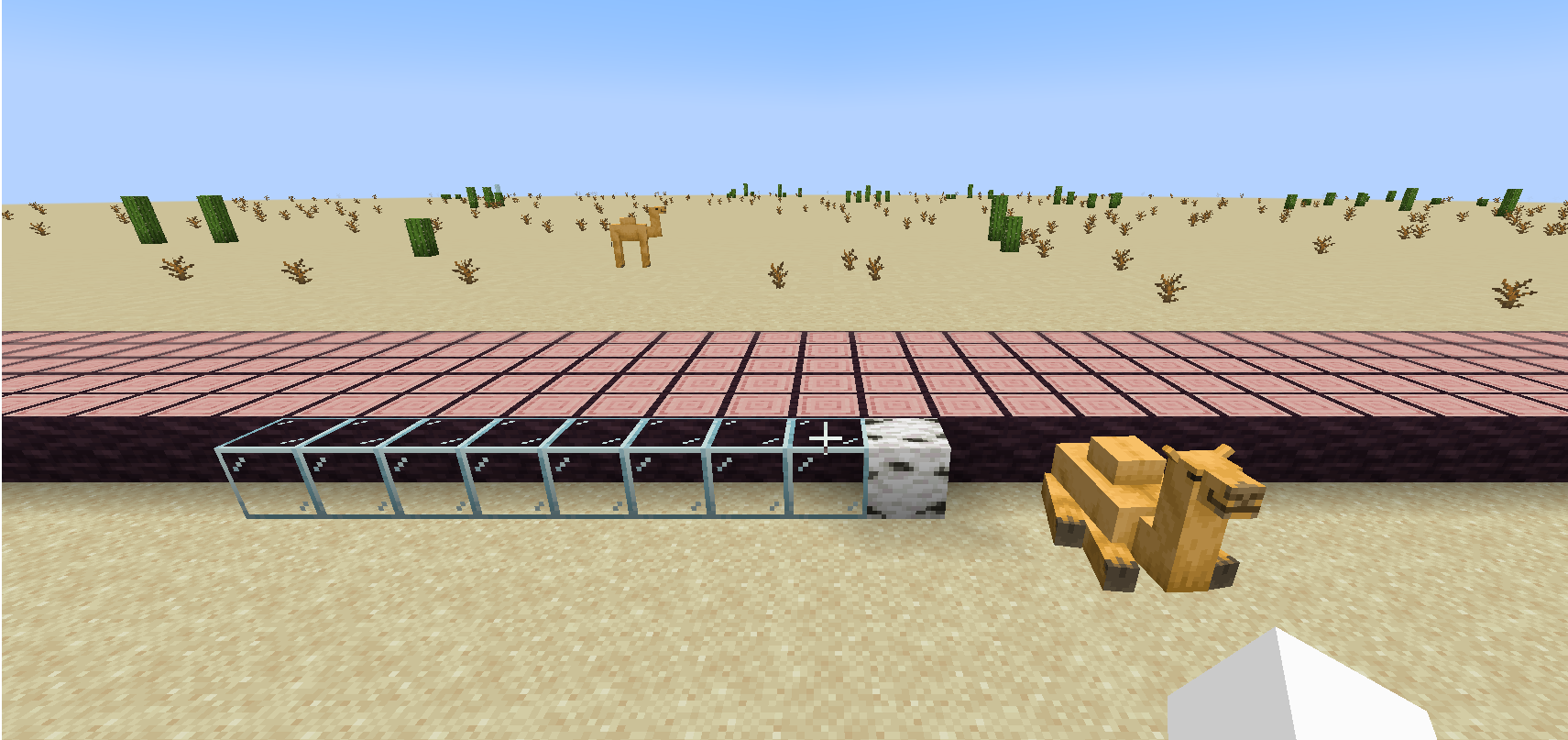}
\caption{The group $\Z^3$ visualized in Minecraft 1.21.1 \cite{minecraft}, with the first axis pointing right, the second axis forward, and the third axis upward. The inductive interval with axis intervals $([-8, -1], [1, 5], [-64, -1])$ is filled with blocks: a block of birch marks the origin, glass is used to fill on the first axis, cherry tree trunks on the second, and desert sand on the last. Plants and camels appear organically, and serve no mathematical purpose.}
\label{fig:minecraft}
\end{figure}

Another convenient way of expressing that $X$ is $C$-avo is through follower sets.

\begin{lemma}
A subshift $X \subset A^G$ is $C$-avo for $C \subset G \setminus \{e_G\}$ with determining set $B$ if and only if $\follow(x|C, e_G) = \follow(x|B, e_G)$ for all $x \in X$.
\end{lemma}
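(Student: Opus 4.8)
The plan is to unwind both the avo condition and the follower-set condition into statements about which single symbols $a \in \Sigma$ may legally occupy $e_G$, and then to observe that the two resulting statements coincide. The only genuine content is a translation between the ``occurs up to a shift'' relation $\sqsubset$ and the positional follower set $\follow$; this is where shift-invariance of $X$ enters, and everything else is bookkeeping about domains (in particular that $B, C \subset G \setminus \{e_G\}$ keeps all the relevant $\sqcup$'s well-defined, since they never overlap $e_G$).

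First I would record the bridging observation: for any configuration $w \in \Sigma^D$ with $D \subset G \setminus \{e_G\}$ and any $a \in \Sigma$,
\[ w \sqcup a^e \sqsubset X \iff a \in \follow(w, e_G). \]
The direction $\Leftarrow$ is immediate, since $a \in \follow(w, e_G)$ gives a point $y \in X$ with $y|D = w$ and $y_{e_G} = a$, which literally contains $w \sqcup a^e$ at the base position. For $\Rightarrow$, an occurrence of $w \sqcup a^e$ means that $g(w \sqcup a^e)$ is a subconfiguration of some $y \in X$ for some $g \in G$; shifting by $g^{-1}$ and using that $X$ is shift-invariant produces a point $g^{-1}y \in X$ with $(g^{-1}y)|D = w$ and $(g^{-1}y)_{e_G} = a$, i.e. $a \in \follow(w, e_G)$.

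Next I would apply this bridge twice, once with $w = x|C$ and once with $w = x|B$, to rewrite the defining biconditional of being $C$-avo. For the fixed $B \Subset C$, the condition reads: for all $x \in X|C$ and all $a$, $x|B \sqcup a^e \sqsubset X \iff x \sqcup a^e \sqsubset X$; by the bridge this is exactly $a \in \follow(x|B, e_G) \iff a \in \follow(x, e_G)$, which, quantified over $a$, says $\follow(x|B, e_G) = \follow(x, e_G)$. Finally I would reconcile the two quantifier ranges: the avo condition quantifies $x$ over $X|C$, while the lemma quantifies over $x \in X$, but $X|C = \{x|C : x \in X\}$ by definition and $(x|C)|B = x|B$, so the two quantifications produce the same family of equalities, namely $\follow(x|C, e_G) = \follow(x|B, e_G)$ for all $x \in X$. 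The step I expect to require the most care is the $\Rightarrow$ direction of the bridge: one must apply the shift convention $gx_h = x_{g^{-1}h}$ correctly to verify that translating an occurrence back to the base position lands $w$ on $D$ and $a$ on $e_G$. Everything else is routine.
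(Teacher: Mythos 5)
Your proof is correct and takes essentially the same route as the paper's: both arguments amount to translating between the occurrence relation $\sqsubset$ and the positional follower set $\follow$ via shift-invariance, and then matching the quantification over $X|C$ with that over $X$. Your version merely factors the translation out as a single reusable biconditional (correctly flagging the shift-back step as the only point needing care), where the paper verifies the two directions of the equivalence separately.
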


\begin{proof}
Suppose first $X$ is $C$-avo, and $B$ is $C$-determining. If $a \in \follow(x|C, e_G)$ then there exists $y \in X$ such that $y|C = x|C$ and $y_e = a$. Then also $y|B = x|B$ and $y_e = a$, so $a \in \follow(x|B, e_G)$. Conversely, if $a \in \follow(x|B, e_G)$ then there exists $y \in X$ such that $y|B = x|B$ and $y_e = a$. Thus, $x|B \sqcup a^e \sqsubset X$. Of course $x|C \in X|C$ and $x|C|B = x|B$. By the $C$-avo property,
\[ x|B \sqcup a^e \sqsubset X \iff x|C \sqcup a^e \sqsubset X. \]
so $x|C \sqcup a^e \sqsubset X$, meaning $a \in \follow(x|C, e_G)$.

Suppose then $\follow(x|C, e_G) = \follow(x|B, e_G)$. We show $X$ is $C$-avo with determining set $B$. Suppose $x \in X|C$, let $x = y|C$ for $y \in X$. We need to show
\[ x|B \sqcup a^e \sqsubset X \iff x \sqcup a^e \sqsubset X. \]
Suppose first $y|B \sqcup a^e = x|B \sqcup a^e \sqsubset X$. Then $\follow(y|B, e) \ni a$, so by the assumption also $a \in \follow(y|C, e) = \follow(x, e)$, meaning there exists $z \in X$ with $z|C = x, z_e = a$. Conversely, if $x \sqcup a^e \sqsubset X$, then trivially $x|B \sqcup a^e \sqsubset X$.
\end{proof}

Note that $e_G$ does not really play a special role in the definition of the avo property, since $X$ is always a subshift, and thus if $B \Subset C$ satisfies $\follow(x|C, e_G) = \follow(x|B, e_G)$ for all $x \in X$, then equivalently $\follow(x|gC, g) = \follow(x|gB, g)$ for all $g \in G$. Sometimes it simplifies life not to have to translate things back to the origin, and we use the preposition ``at'' as follows:

\begin{definition}
If $\follow(x|C, g) = \follow(x|B, g)$, then we say \emph{$B$ is determining for $C$ (or $C$-determining) at $g$}.
\end{definition}

\begin{lemma}
The follower set function $\follow$ is decreasing in the leftmost argument with respect to the subconfiguration relation.
\end{lemma}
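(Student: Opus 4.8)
The plan is first to pin down what the statement asserts. The leftmost argument of $\follow$ is a configuration, and the relevant order is the subconfiguration relation: $x \in \Sigma^C$ sits below $x' \in \Sigma^D$ exactly when $C \subset D$ and $x'|C = x$. So ``decreasing in the leftmost argument'' should mean that enlarging the conditioning configuration can only shrink the follower set, i.e.\ whenever $x$ is a subconfiguration of $x'$ we have $\follow(x', E) \subset \follow(x, E)$ for every $E \subset G$. This matches the intuition that a larger domain imposes more constraints, and it is exactly the inclusion implicitly used in the follower-set reformulation of the avo property above: since $B \subset C$ there, we have $\follow(x|C, e_G) \subset \follow(x|B, e_G)$ for free, and the avo condition is the nontrivial \emph{reverse} inclusion.

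The proof itself I expect to be immediate from the definition, using a single witness point. Fix $x \in \Sigma^C$ and $x' \in \Sigma^D$ with $C \subset D$ and $x'|C = x$, and fix $E \subset G$. Given $z \in \follow(x', E)$, the definition supplies some $y \in X$ with $y|D = x'$ and $y|E = z$. Since $C \subset D$, restricting once more gives $y|C = (y|D)|C = x'|C = x$, so the very same $y$ witnesses $z \in \follow(x, E)$. Hence $\follow(x', E) \subset \follow(x, E)$, as desired.

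There is no genuine mathematical obstacle here; the only care needed is in reading the statement correctly. In particular one must fix the direction of monotonicity (larger configuration, smaller follower set) and observe that the argument constructs no new point of $X$, so it is completely insensitive to whether $E$ meets the domains $C$ or $D$. I would simply record that one witness $y$ serves both follower sets, which makes the containment hold verbatim for arbitrary $E$.
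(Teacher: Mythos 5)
Your proposal is correct and follows exactly the same argument as the paper: interpret ``decreasing'' as $\follow(x', E) \subset \follow(x, E)$ when $x$ is a subconfiguration of $x'$, and observe that any witness point $y \in X$ for the larger configuration restricts to a witness for the smaller one. Nothing to add.
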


\begin{proof}
Let $C \subset D$. Let $x \in X|C$ and $y \in X|D$ with $y|C = x$. Suppose $z \in \follow(y, B)$ for some $B \subset G$. This means there exists $w \in X$ such that $w|D = y$ and $w|B = z$. Then $w|C = y|C = x$, so $z \in \follow(x, B)$, concluding the proof.
\end{proof}

By the previous lemma, $\follow(x|C, e_G) = \follow(x|B, e_G)$ for $B \Subset C$ implies that $\follow(x|C, e_G) = \follow(x|D, e_G)$ for all $B \subset D \subset C$. We obtain the following immediate consequence:

\begin{lemma}
If $B \Subset C$ is $C$-determining, then $B$ is $D$-determining for all $D$ such that $B \subset D \subset C$, and every $D$ with $B \subset D \Subset C$, is $C$-determining.
\end{lemma}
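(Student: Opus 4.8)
The plan is to reduce both assertions to the follower-set characterization of the avo property (the first lemma above) and then apply monotonicity of $\follow$; the two claims share essentially the same one-line computation, read off in the two possible directions. First I would restate the hypothesis: since $B$ is $C$-determining, the first lemma gives $\follow(x|C, e_G) = \follow(x|B, e_G)$ for every $x \in X$. This equality of the two ``extreme'' follower sets is the only input I will use.

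Next, fix any $D$ with $B \subset D \subset C$ and fix $x \in X$. The restrictions satisfy $x|B = (x|D)|B$ and $x|D = (x|C)|D$, so $x|B$ is a subconfiguration of $x|D$ and $x|D$ is a subconfiguration of $x|C$. Feeding these two inclusions into the monotonicity lemma (that $\follow$ is decreasing in its leftmost argument under the subconfiguration order) yields the sandwich
\[ \follow(x|C, e_G) \subset \follow(x|D, e_G) \subset \follow(x|B, e_G). \]
Because the outer two sets coincide by hypothesis, all three are equal; in particular $\follow(x|D, e_G) = \follow(x|B, e_G) = \follow(x|C, e_G)$ for all $x \in X$.

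From here both statements follow by reading the first lemma in opposite directions. Since $B \Subset C$ together with $B \subset D$ forces $B \Subset D$, the equality $\follow(x|D, e_G) = \follow(x|B, e_G)$ says exactly that $B$ is $D$-determining, which is the first claim. For the second, taking $D$ with $B \subset D \Subset C$, the equality $\follow(x|C, e_G) = \follow(x|D, e_G)$ (the left half of the sandwich) together with $D \Subset C$ says exactly that $D$ is $C$-determining. I do not foresee a genuine obstacle here, in line with the ``immediate consequence'' remark preceding the statement; the only points requiring minor care are orienting the monotonicity correctly (the larger domain carries the smaller follower set) and verifying the finiteness side-conditions $B \Subset D$ and $D \Subset C$, both of which are immediate.
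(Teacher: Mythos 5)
Your proposal is correct and follows exactly the route the paper takes: the paper derives this lemma as an immediate consequence of the monotonicity of $\follow$ in its leftmost argument, via the same sandwich $\follow(x|C, e_G) \subset \follow(x|D, e_G) \subset \follow(x|B, e_G)$ with equal outer terms. Nothing to add.
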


We call such $D$ \emph{intermediate sets}. It is useful to observe that if $X$ is $C$-avo, then, since the determining set $B$ can be any sufficiently large finite intermediate set, we may simply take $B$ to be the set of elements in $C$ which are at distance $n$ from the identity in the group, for any sufficient $n$. Any $n$ such that $C \cap \ball_n$ is $C$-determining is called an \emph{avoradius} for $C$ (the term ``radius'' comes from the theory of cellular automata, see Proposition~\ref{prop:Spacetime}). This allows us to express the following notion, which plays a key role later in the paper. 

\begin{definition}
Let $\C \subset \pow(G \setminus \{e_G\})$ be any family of sets. We say $X$ is \emph{uniformly $\C$-avo} or a \emph{uniform $\C$-avoshift} if there is a common avoradius for all $C \in \C$.  
\end{definition}

In other words, if we have a configuration with domain $C \in \C$ that is globally valid, then to know the set of its legal continuations to $e_G$, it suffices to look at the subpattern on $C \cap \ball_n$, where $n$ does not depend on $C$.

The following characterization of the avo property is mainly used to explain the name, and to point out a connection to a well-known property of SFTs in one dimension, namely that the shift map being open characterizes one-sided SFTs \cite[Theorem~1]{Pa66a}. For $C \subset G \setminus \{e_G\}$, write $\dot C = C \cup \{e_G\}$.

\begin{lemma}
A subshift $X$ is avo for a set $C \subset G \setminus \{e_G\}$ if and only if the projection from $X|\dot C$ to $X|C$ is an open map.
\end{lemma}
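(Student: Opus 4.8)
The plan is to reduce openness of the projection $\pi : X|\dot C \to X|C$, $\pi(y)=y|C$, to a statement about the sets $W_a := \{x \in X|C : a \in \follow(x, e_G)\}$ for $a \in \Sigma$, and then to match that statement against the avo property. First I would record the basic observation that $W_a = \pi(\{y \in X|\dot C : y_e = a\})$. Since $\{y \in X|\dot C : y_e = a\}$ is clopen, hence compact, and $\pi$ is continuous, each $W_a$ is automatically closed (compact) in $X|C$, regardless of whether $X$ is avo. Thus the real content of openness will always be the extra assertion that the $W_a$ are also \emph{open}, i.e.\ clopen.

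Next I would prove the reduction: $\pi$ is open if and only if every $W_a$ is open. It suffices to check that $\pi$ sends a basis of open sets to open sets. A basic open set of $X|\dot C$ has the form $U = \{y : y|F = Q\}$ with $F \Subset \dot C$. If $e_G \notin F$ then $\pi(U)$ is just the cylinder $\{x \in X|C : x|F = Q\}$, which is open. If $e_G \in F$, write $F = F' \cup \{e_G\}$ and $Q = Q' \sqcup a^e$; then $\pi(U) = \{x \in X|C : x|F' = Q'\} \cap W_a$, an intersection of a cylinder with $W_a$, so it is open precisely when $W_a$ is. Conversely, taking $U = \{y : y_e = a\}$ gives $\pi(U) = W_a$, forcing each $W_a$ to be open whenever $\pi$ is.

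The easy direction of the lemma is then that avo implies openness. If $B \Subset C$ is $C$-determining, then by the follower-set characterization $a \in \follow(x, e_G) \iff a \in \follow(x|B, e_G)$, so membership of $x$ in $W_a$ depends only on the finite subpattern $x|B$; hence $W_a$ is a finite union of cylinders and in particular open, and openness of $\pi$ follows from the reduction.

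The main work is the converse, and it is where compactness does the job. Assuming $\pi$ open, each $W_a$ is clopen (open by hypothesis, closed by the first step). Using the cylinder basis together with compactness I would write each $W_a$ as a finite union of cylinders $[P_i^{(a)}] \cap X|C$ with $P_i^{(a)}$ a pattern on a finite domain $F_i^{(a)} \Subset C$, and then set $B := \bigcup_{a \in \Sigma} \bigcup_i F_i^{(a)}$, which is finite because $\Sigma$ is finite. To check that $B$ is $C$-determining I would verify $\follow(x|C, e_G) = \follow(x|B, e_G)$ for all $x \in X$; the inclusion $\subseteq$ is automatic from monotonicity of $\follow$ in its leftmost argument. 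For $\supseteq$, if $a \in \follow(x|B, e_G)$ there is $y \in X$ with $y|B = x|B$ and $y_e = a$, so $y|C \in W_a$; then $y|C$ agrees with some defining pattern $P_i^{(a)}$ on $F_i^{(a)} \subseteq B$, and since $x$ and $y$ agree on $B$, also $x$ agrees with $P_i^{(a)}$, placing $x \in W_a$, i.e.\ $a \in \follow(x, e_G)$. The only subtlety to keep track of is handling the two subspace topologies (on $X|\dot C$ and on $X|C$) and the basic open sets constraining the $e_G$-coordinate; the genuinely essential ingredient is that openness upgrades the always-closed $W_a$ to clopen, after which extracting a single finite determining window $B$ is a routine compactness argument.
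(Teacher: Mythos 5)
Your proof is correct, but it is organized differently from the paper's. You first isolate the sets $W_a = \{x \in X|C \colon a \in \follow(x, e_G)\}$, observe they are always closed (as continuous images of compact sets), and reduce openness of $\pi$ to openness of each $W_a$; the avo property then becomes exactly the statement that each $W_a$ is a finite union of cylinders, and the converse is a direct compactness extraction of a finite subcover of the clopen set $W_a$, whose cylinder domains assemble into the determining set $B$. The paper instead argues pointwise: for the forward direction it takes an arbitrary open $U \ni y$, enlarges the determining set $B$ and the cylinder base of $U$ to a common finite window $\dot B$, and verifies that $[z|B]$ lands in $\pi(U)$; for the converse it argues by contrapositive, extracting a limit point from witnesses $x^B, y^B$ of the failure of the avo property for every finite $B$ and exhibiting a specific open set (your $\{y \colon y_e = a\}$, in fact) whose image is not a neighborhood of that limit. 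The two arguments are equivalent in substance --- your finite-subcover step and the paper's limit-point step are the same use of compactness in dual forms --- but your decomposition via the $W_a$ makes the equivalence more transparent (avo $=$ each $W_a$ is determined by a finite window $=$ each $W_a$ is open) and produces the determining set constructively, while the paper's version avoids introducing the auxiliary sets and gives slightly more explicit neighborhoods in the forward direction. One small point worth making explicit in your reduction is that $\pi$ is surjective (both $X|\dot C$ and $X|C$ are projections of $X$), which is what guarantees $\pi(U)$ is the full cylinder when $e_G$ is not in the base of $U$; the paper records this explicitly.
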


Here, the set $X|D \subset \Sigma^D$ is topologized by the relative topology inherited from the product topology on $\Sigma^D$.

\begin{proof}
Suppose $B \Subset C$ is such that for all $x \in X|C$ we have
\[ x|B \sqcup a^e \sqsubset X \iff x \sqcup a^e \sqsubset X. \]
Note that, as explained above, this property holds for any superset of $B$ (contained in $C$) as well.

Write $\pi : X|\dot C \to X|C$ for the projection. Observe that this map is surjective, since $X|\dot C$ and $X|C$ are both just projections of $X \subset A^G$ to different sets of coordinates. Let $U \subset X|\dot C$ be an open set. Let $z \in \pi(U)$, say $z = \pi(y)$ where $y \in U$. It suffices to show that some open set (in $X|C$) containing $z$ is contained entirely in $\pi(U)$.

Since $y \in U$ and $U$ is open in $X|\dot C$, there exists $B' \Subset \dot C$ such that whenever $y'|B' = y|B'$ and $y' \in X|\dot C$, then $y' \in U$. Note that this property holds for any superset of $B'$ (contained in $\dot C$) as well. Thus, we may assume $\dot B = B \cup \{e_G\} = B'$ by possibly replacing both with a larger set. We claim that then the open set $[z|B]$ (in $X|C$) is contained in $\pi(U)$. To see this, let $x \in [z|B]$ be arbitrary, meaning $x \in X|C$ and $x|B = z|B$.

We now have $z \sqcup a^e = y \in X|\dot C$ where $a = y_e$. Thus trivially, $x|B \sqcup a^e = z|B \sqcup a^e \sqsubset X$. Thus, by the defining property of $B$, $x \sqcup a^e \in X|\dot C$. 
This means there exists $w \in X$ such that $w|\dot C = x \cup a^e$. Then $\pi(w|\dot C) = x$ and $w|\dot B = y|\dot B$, so $w|\dot C \in U$, concluding the proof.

Conversely, suppose the property fails for all $B \Subset C$, meaning for some $a \in \Sigma$ we can find, for all $B \Subset C$, patterns $x^B, y^B \in X|C$ such that $x^B|B = y^B|B$ and $x^B \sqcup a^e \in X|\dot C$ but $y^B \sqcup a^e \notin X|\dot C$. Let $(x, y)$ be any limit point of $(x^B, y^B)$. Then clearly $x = y \in X|C$ and $z = x \sqcup a^e = \lim_B (x^B \sqcup a^e) \in X|\dot C$. For the open set $[z|\{e\}]_{\{e\}} \ni z$, $y = x \in U|C$, but none of the points $y^B$ are in $U|C$, so $U|C$ it is not a neighborhood $y$. Thus, the projection is not open.
\end{proof}

\begin{remark}
The prefix ``avo'' \textup{[\textipa{Avo}]} refers to ``open'' in Finnish. This and the previous lemma are the source of the term ``avoshift''.
\end{remark}





The forward direction of the following result (that avoshifts are of finite type) is based on the classical proof of Kitchens for group shifts being subshifts of finite type \cite{Ki87}.

\begin{lemma}
\label{lem:ZCase}
If $G = \Z$, $X \subset \Sigma^G$ is an avoshift for the sets $(-\infty, -1]$ if and only if it is of finite type.
\end{lemma}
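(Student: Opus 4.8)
The plan is to verify both implications through the follower-set characterization of the avo property (the lemma stating that $X$ is $C$-avo with determining set $B$ iff $\follow(x|C,e_G)=\follow(x|B,e_G)$ for all $x\in X$), together with the monotonicity lemma for $\follow$. Throughout I identify $e_G$ with $0\in\Z$, and I use that, by shift-invariance, a pattern lies in $\lang(X)$ iff it occurs in $X$ at its own domain (translate the witnessing point).

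For the forward direction, suppose $X$ has window size $m$, so that membership in $X$ is equivalent to local legality with respect to forbidden patterns supported on sub-intervals of length at most $2m+1$. I claim $B=[-2m,-1]$ is determining for $C=(-\infty,-1]$. The inclusion $\follow(x|C,0)\subseteq\follow(x|B,0)$ is immediate from monotonicity. For the reverse, given $a\in\follow(x|B,0)$ pick $y\in X$ with $y|B=x|B$ and $y_0=a$, and form the spliced point $z$ with $z|(-\infty,-1]=x|(-\infty,-1]$ and $z|[0,\infty)=y|[0,\infty)$. Any forbidden window straddling the seam at $0$ has domain contained in $[-2m,2m-1]$, so it reads off $z$ only symbols on which $x$ and $y$ agree (the negative coordinates, by the choice of $B$) together with symbols of $y$ (the nonnegative coordinates); it therefore coincides with a genuine window of $y$ and is legal, while windows entirely on one side are legal because $x|(-\infty,-1]$ and $y$ are. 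Hence $z\in X$, witnessing $a\in\follow(x|(-\infty,-1],0)$, so $X$ is $(-\infty,-1]$-avo.

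The substance is the converse. Let $n$ be an avoradius, so $B=[-n,-1]$ is determining: $\follow(x|(-\infty,-1],0)=\follow(x|[-n,-1],0)$ for all $x\in X$. Let $Y$ be the $[-n,0]$-SFT approximation of $X$, i.e.\ the SFT obtained by forbidding exactly the length-$(n+1)$ words not in $\lang(X)$; clearly $X\subseteq Y$, and it remains to prove $Y\subseteq X$. The key step is a \emph{finite extension claim}: if $u\in\lang(X)$ has $|u|\ge n$ and the length-$n$ suffix $s$ of $u$ satisfies $sa\in\lang(X)$, then $ua\in\lang(X)$. To see this, realize $u$ inside some $x\in X$ with $u$ ending at coordinate $-1$; then $s=x|[-n,-1]$, and $sa\in\lang(X)$ gives $a\in\follow(x|[-n,-1],0)$, so by the determining property $a\in\follow(x|(-\infty,-1],0)$, producing $z\in X$ agreeing with $x$ on $(-\infty,-1]$ with $z_0=a$, i.e.\ $ua\sqsubset z$.

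Given $y\in Y$, every length-$(n+1)$ window of $y$ lies in $\lang(X)$. For each $N$ I seed with the window $u=y|[-N-n,-N]\in\lang(X)$ and extend rightward one cell at a time: appending $y_j$ is legal by the finite claim, because the required suffix-plus-symbol is $y|[j-n,j]$, again a window of $y$ and hence in $\lang(X)$. This yields $y|[-N-n,m]\in\lang(X)$ for all $m$, and letting $N,m\to\infty$ shows every finite restriction of $y$ is in $\lang(X)$; compactness of $X$ then gives $y\in X$. The main obstacle I anticipate is precisely this converse local-to-global passage, where one must avoid circularity since a priori only short windows of $y$ are known to be legal. The resolution is that the avo property upgrades the one-sided follower constraint into the finite extension claim, so legality can be propagated rightward from an arbitrarily far-left seed window of $y$ itself, after which compactness finishes without ever requiring a separately supplied globally valid half-line.
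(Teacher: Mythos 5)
Your proof is correct and follows essentially the same route as the paper's: the forward direction uses the same splice-at-the-seam construction, and the converse is the paper's argument of seeding with a single legal window far to the left and propagating global validity rightward one cell at a time via the determining set applied to intermediate sets, finishing by compactness. Your ``finite extension claim'' is just an explicit packaging of the step the paper performs inside its induction on the intervals $J_i$.
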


\begin{proof}
Suppose first that $X$ is of finite type. Let $n$ be the maximal diameter of a forbidden pattern. Then $(-\infty, -1]$ is determined by $[-n, -1]$: if $x \in X|(-\infty, -1]$ and $x|[-n, -1] \sqcup a^0 \sqsubset X$ then let $y \in X$ be any point with $y|[-n, 0] = x|[-n, -1] \sqcup a^0$. The point $z = x \sqcup y \in \Sigma^\Z$, i.e.\ the point defined by $z|(-\infty, -1] = x, z|[-n, \infty) = y|[-n, \infty)$ is clearly locally valid, and thus a point of $X$, and $z|(-\infty, 0] = x \sqcup a^0$, proving the avo property for $(-\infty, -1]$.

Suppose then that $X$ is an avoshift for this set. By assumption, then there is a determining set $M$ for $(\infty, -1]$, we can take $M = [-m, -1]$ and $\dot M = [-m, 0]$. 
We claim that then $X$ is equal to its $[-m, 0]$-SFT approximation. Equivalently, we claim that if every $i+\dot M$ subpattern of a point $x \in \Sigma^\Z$ is a translate of a pattern in $X|\dot M$, then $x \in X$. For this, suppose that indeed $x \in \Sigma^\Z$ is such that $x|i+\dot M \in X|i+\dot M$ for all $i \in \Z$.

For $n \in \Z$, consider the intervals $J_0 = [n, n+m-1], J_1 = [n, n+m], J_2 = [n, n+m+1], \ldots$. 
We show by induction that $x|J_i \sqsubset X$ for all $i$. For the basis of induction, our assumption directly implies $x|J_0 = x|n+m+M \sqsubset X$. Suppose then that $x|J_i$ is globally valid, and let $y \in X$ satisfy $y|J_i = x|J_i$. We have in particular $y|n+m+i+M \sqsubset X$ since $n+m+i+M \subset J_i$. Since $M$ is determining at $0$ for $(-\infty, -1]$, the set $n+m+i+M$ is determining at $n+m+i$ for $(-\infty, n+m+i-1]$. Thus it is also determining at $n+m+i$ for the intermediate set $J_i$ (recall that this means $n+m+i+M \subset J_i \subset (-\infty, n+m+i-1]$). Since $y|(-\infty, n+m+i-1] \sqsubset X$ (because even $y \in X$) and $x|n+m+i+\dot M \sqsubset X$, we conclude that $y|(-\infty, n+m+i-1] \sqcup (x_{n+m+i})^{n+m+i} \sqsubset X$, in particular $x|J_{i+1}$ is globally valid.

By compactness, we conclude that $x|\bigcup_i J_i = x|[n, n+\infty)$ is globally valid. Since $n \in \Z$ was arbitrary, we conclude that $x \in X$.
\end{proof}



The direction that all SFTs are avo is specific to dimension $1$ (or rather, it generalizes naturally to free groups, see Theorem~\ref{thm:FreeGroupSFTUniformlyAvo}). As avoshifts have nice computational properties, no decidability-theoretically useful notion of avoshift can cover all SFTs in higher dimensions. The direction that avoshifts are SFT on the other hand generalizes to all polycyclic groups, as we will see.

Note that the previous lemma implies that even in the case of $\Z$, avoshifts need not have dense periodic points, and may have any number of measures of maximal entropy (though not in the topologically mixing case).

\subsection{Equal extension counts}
\label{sec:EEC}

In this section, we define subshifts with equal extension counts. This is an intermediate class that contains all group shifts and $k$-TEP subshifts (as we show in Section~\ref{sec:Examples}), and is contained in the avoshifts. In the present paper, all results we prove are proved for general avoshifts (or uniform avoshifts), so equal extension counts do not play a direct role in the proofs.

\begin{definition}
A subshift $X$ has \emph{equal extension counts} for a set $C \subset G \setminus \{e_G\}$ if the cardinality $|\follow(x|C, e_G)|$ does not depend on $x \in X$. If $\C$ is a family of subsets of $G$, we say $X$ has equal extension counts for $\C$ if it has equal extension counts for all $C \in \C$.
\end{definition}

The following proof again should remind the reader of Kitchens' argument that group shifts are of finite type.

\begin{lemma}
If $X$ has equal extension counts for a set $C \subset G \setminus \{e_G\}$, then it is avo for this set.
\end{lemma}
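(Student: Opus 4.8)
The plan is to work through the follower-set characterization of the avo property established above, so that it suffices to exhibit a finite $B \Subset C$ with $\follow(x|C, e_G) = \follow(x|B, e_G)$ for all $x \in X$. Write $N = |\follow(x|C, e_G)|$, which by hypothesis is independent of $x$. Since $\follow$ is decreasing in its leftmost argument, for any $B \subset C$ we have $\follow(x|C, e_G) \subset \follow(x|B, e_G)$, and in particular $|\follow(x|B, e_G)| \geq N$. Thus it is enough to find a single $B \Subset C$ for which $|\follow(x|B, e_G)| \leq N$ for all $x \in X$: for such a $B$ the inclusion $\follow(x|C, e_G) \subset \follow(x|B, e_G)$ becomes an inclusion of $N$-element sets, hence an equality.

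First I would fix the exhaustion $B_i = C \cap \ball_i \nearrow C$ and argue by contradiction: if no finite $B$ works, then for each $i$ there is some $x_i \in X$ with $|\follow(x_i|B_i, e_G)| \geq N+1$, i.e.\ there are $N+1$ distinct symbols $a_{i,0}, \dots, a_{i,N}$ witnessed by points $y_{i,0}, \dots, y_{i,N} \in X$ satisfying $y_{i,k}|B_i = x_i|B_i$ and $(y_{i,k})_{e_G} = a_{i,k}$. The goal is then to pass to the limit and produce a single $x \in X$ whose follower set at $e_G$, now computed on all of $C$, still has at least $N+1$ elements, contradicting equal extension counts.

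The core of the argument is this extraction. Using compactness of $X$ (and of finite powers of $X$) I would pass to a subsequence along which $x_i \to x \in X$ and simultaneously $y_{i,k} \to y_k \in X$ for each $k$; by the pigeonhole principle on the finite alphabet I can further arrange that each value $(y_{i,k})_{e_G} = a_k$ is constant in $i$, with $a_0, \dots, a_N$ still pairwise distinct (distinctness is preserved in the limit). For any fixed $g \in C$ we have $g \in B_i$ for all large $i$, so $(y_{i,k})_g = (x_i)_g \to x_g$; passing to the limit gives $(y_k)_g = x_g$ for every $g \in C$, i.e.\ $y_k|C = x|C$ for all $k$. Hence $a_0, \dots, a_N \in \follow(x|C, e_G)$, so $|\follow(x|C, e_G)| \geq N+1$, contradicting the assumption that this cardinality equals $N$ for the point $x \in X$. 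Therefore the desired $B$ exists, and by the first paragraph $X$ is avo for $C$ with determining set $B$.

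I expect the main obstacle to be exactly the uniformity. The pointwise statement that for each individual $x$ some finite window determines its extensions is an immediate consequence of compactness (a decreasing intersection of finite subsets of the alphabet stabilizes), but turning this into one window $B$ valid for all $x$ at once is where the equal-extension-counts hypothesis is essential, and the compactness-plus-pigeonhole limiting argument above is what converts a failure of uniformity into a genuine violation of the constant-cardinality assumption.
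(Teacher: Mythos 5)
Your proof is correct and takes essentially the same route as the paper's: the paper's concrete one-liner (``$|\follow(x|S', e_G)|$ can only decrease as $S' \nearrow S$, so this is reached by some finite $S'$, and by uniformity $S'$ must be $S$-determining'') is exactly your monotonicity-plus-equal-counts observation, with your compactness and pigeonhole extraction supplying in detail the uniformity over $x \in X$ that the paper leaves implicit (and which it alternatively packages as ``a map between compact spaces with fibers of constant finite cardinality is open''). No gaps.
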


\begin{proof}
A map between compact spaces with fibers of constant finite cardinality is open. Or more concretely, $|\follow(x|S', e_G)|$ can only decrease as $S' \nearrow S$, so this is reached by some finite $S'$, and by uniformity $S'$ must be $S$-determining. 
\end{proof}

Again, we may define a notion of \emph{uniform equal extension counts} by requiring that the $B \Subset C$ in the definition such that $|\follow(x|C, e_G)| = |\follow(x|B, e_G)|$ can be taken $C \cap \ball_r$ for some fixed $r$. Uniform equal extension counts for a family of sets $\C$ implies uniform avo for the same set, by the previous proof.

\section{Examples of avoshifts}
\label{sec:Examples}

Our first example are the group shifts. The proof should again should remind the reader of Kitchens' argument \cite{Ki87} that group shifts are of finite type.

\begin{lemma}
\label{lem:Quasigroup}
Let $\Sigma$ be a finite quasigroup, and suppose $X \subset \Sigma^G$ is a subshift closed under cellwise application of operations of $\Sigma$. Then $X$ has equal extension counts for any $C \subset G \setminus \{e_G\}$. Thus, such $X$ is an avoshift for $\pow(G \setminus \{e_G\})$.
\end{lemma}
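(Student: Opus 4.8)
The second statement is immediate from the preceding lemma (equal extension counts for $C$ implies avo for $C$), applied to every $C \in \pow(G \setminus \{e_G\})$, so the plan is to establish the equal extension counts property. The idea, following Kitchens, is that the cellwise quasigroup structure lets us transport extensions of one configuration into extensions of another. Concretely, I would fix two points $x, x' \in X$ and show $|\follow(x|C, e_G)| = |\follow(x'|C, e_G)|$ by exhibiting an explicit injection between these two sets built from the quasigroup operations.

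Recall that a finite quasigroup, viewed as a universal algebra, carries the multiplication $\cdot$ together with the left and right divisions $\backslash, /$, satisfying in particular $(b/a)\cdot a = b$, and that left multiplication $L_c : d \mapsto c \cdot d$ is a bijection of $\Sigma$ for every $c$. Since $X$ is closed under cellwise application of all these operations, it is closed under any cellwise term operation. I would set $c = x'_e / x_e$ and, given any extension witnessed by $y \in X$ with $y|C = x|C$ and $y_e = z$, form the configuration $w$ defined cellwise by $w_g = (x'_g / x_g)\cdot y_g$. Closure under cellwise operations gives $w \in X$; for $g \in C$ we have $w_g = (x'_g / x_g)\cdot x_g = x'_g$, so $w|C = x'|C$; and $w_e = (x'_e/x_e)\cdot y_e = c \cdot z = L_c(z)$. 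Hence $L_c$ maps $\follow(x|C, e_G)$ into $\follow(x'|C, e_G)$.

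Since $L_c$ is injective on all of $\Sigma$, this yields $|\follow(x|C, e_G)| \le |\follow(x'|C, e_G)|$; exchanging the roles of $x$ and $x'$ (using $c' = x_e / x'_e$) gives the reverse inequality, so the two follower sets have equal cardinality. As $x, x' \in X$ were arbitrary, $X$ has equal extension counts for $C$, and the avo conclusion follows from the earlier lemma. There is no serious obstacle here; the only point requiring care is the availability of the division operations, which I would address by noting that closure under the full quasigroup signature (not merely the multiplication) is what the hypothesis provides, so that the composite term operation $(x', x, y) \mapsto (x'/x)\cdot y$ may indeed be applied cellwise inside $X$.
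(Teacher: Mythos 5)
Your proposal is correct and follows essentially the same route as the paper: both transport extensions of one configuration to extensions of another via a cellwise quasigroup multiplication (the paper multiplies the extended patterns $x_i$ by an extension of $y/x$, you multiply by the term $(x'/x)\cdot(-)$ and track the resulting bijection $L_c$ at the identity), and both conclude by injectivity of left multiplication plus symmetry. Your observation that closure under the full quasigroup signature, including the divisions, is what makes the term operation available is exactly the point the paper relies on as well.
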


\begin{proof}
We extend the cellwise quasigroup operations to any patterns with the same domain in the usual way (the operations are defined cellwise, so apply them cellwise in the input patterns). Let $C \subset G \setminus \{e_G\}$ be arbitrary. If $x, y \in X|C$ then $z \cdot x = y$ where $z = (y / x)$. List the extensions $x_1, \ldots, x_k \in \follow(x|C, C\cup\{e_G\})$. Let $z' \in \follow(z, C \cup \{e_G\})$ be arbitrary. Then clearly $(z' \cdot x_i)|C = y$. Since multiplication by a constant in a quasigroup is injective, and the operations are applied cellwise, we obtain that if $x_i \mapsto z \cdot x_i$ is injective, and the only possibility is that these patterns differ at $e_G$. Thus, $y$ has at least as many extensions to $e_G$ as $x$.
\end{proof}

Recall that group shifts are \emph{cellwiseable} \cite{Ki87,SaTo12e} meaning up to conjugacy the operations of the group can be taken cellwise (the term is from \cite{SaTo20}). Thus, this lemma indeed applies to all group shifts up to conjugacy. Quasigroup shifts are not cellwiseable \cite{SaTo12e}, so in the case of quasigroups the assumption that the alphabet itself is a quasigroup may be crucial.

\begin{example}
The Ledrappier subshift
\[ X_L = \{x \in \Z_2^{\Z^2} \colo x_{\vec v} + x_{\vec v + (1,0)} + x_{\vec v + (0,1)} = 0\} \]
is an abelian group shift with cellwise group operation addition in the two-element group $\Z_2$. In particular, it is a quasigroup subshift with cellwise operations, thus it has equal extension counts. Thus it is an avoshift for the family of all subsets of $\Z^2$. \qee
\end{example}

\begin{example}
Let $G$ be any f.g.\ infinite group. Then there is a group shift $X \subset \Z_2^G$ which is not uniformly $\pow(G \setminus \{e_G\})$-avo. For example, $X = \{0^G, 1^G\}$ is not uniformly $\pow(G \setminus \{e_G\})$-avo. \qee
\end{example}

Our next example are the $k$-TEP subshifts. In \cite{Sa22d} these are defined in high generality (with respect to any ``$S$-UCP translation-invariant convexoid $\C$'' on the shift group $G$), but we restrict here to the Euclidean case. Let $S \Subset \Z^d$, and let $\pat \subset \Sigma^S$ be a set of patterns. A \emph{corner} of $S$ is any $s \in S$ such that there exists a linear functional $\ell : \R^d \to \R$ such that $\ell(s) > \ell(t)$ for all $t \in S \setminus s$.

\begin{definition}
 We say $\pat$ is $k$-TEP if for all corners $s$ of $S$, for any $Q \in \Sigma^{S \setminus s}$ there are exactly $k$ patterns in $\pat$ which agree with $Q$ on $S \setminus s$. We say $X \subset \Sigma^{\Z^d}$ is $k$-TEP if it is defined by forbidden patterns $\Sigma^S \setminus \pat$ for some $k$-TEP set $\pat \subset \Sigma^S$. 
\end{definition}

Let $\D$ be the family of all intersections of real convex sets with $\Z^d$, and let $\C$ be the set of all $C \in \D$ such that $\vec 0 \notin C$ and $C \cup \{\vec 0\} \in \D$.

\begin{lemma}
Let $X \subset \Sigma^{\Z^d}$ be a $k$-TEP subshift. Then $X$ has uniform equal extension counts for $\C$, thus is uniformly $\C$-avo.
\end{lemma}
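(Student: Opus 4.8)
The plan is to reduce the global follower count to a purely local count on the convex set $C \cup \{\vec 0\}$, and then to show that at most one translate of the forbidden-pattern shape $S$ can constrain the cell $\vec 0$, so that the $k$-TEP property pins down the count independently of $x$.

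First I would invoke the defining feature of $k$-TEP subshifts from \cite{Sa22d} (the first bullet of its Theorem~1.1): on a convex set a configuration is globally valid precisely when it is locally valid, i.e.\ contains no pattern from $\Sigma^S \setminus \pat$, with a uniform window. For infinite $C$ I would obtain this from the finite case by compactness, restricting to the convex sets $(C \cup \{\vec 0\}) \cap \ball_N$ and passing to the limit. Consequently, writing $D = C \cup \{\vec 0\}$, for $x \in X|C$ the set $\follow(x|C, \vec 0)$ equals the set of symbols $a$ such that $x \sqcup a^{\vec 0}$ is locally valid on $D$; and since $x$ itself is already locally valid on $C$, only the translates $g + S \subseteq D$ with $\vec 0 \in g+S$ can constrain $a$.

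Next I would locate $\vec 0$ as a corner and control these translates. Since $C \cup \{\vec 0\}$ is convex and $\vec 0 \notin C$, one checks $\vec 0 \notin \mathrm{conv}(C)$ (otherwise $\vec 0$ would lie in $\mathrm{conv}(C)\cap\Z^d = C$), so a strictly separating functional $\ell$ with $\ell(\vec 0) > \ell(c)$ for all $c \in C$, with a positive gap, exists; perturbing it slightly I would make $\ell$ injective on the finite set $S$ while preserving the separation, giving a unique $\ell$-maximal corner $s^*$ of $S$. For any translate $g+S \subseteq D$ containing $\vec 0$, every other cell lies in $C$ and hence has strictly smaller $\ell$-value, so $\vec 0$ is the $\ell$-maximum of $g+S$; this forces $g + s^* = \vec 0$, i.e.\ $g = \vec 0 - s^*$. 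Thus at most one translate, $T = (\vec 0 - s^*) + S$, can constrain $\vec 0$.

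Finally I would count. If $T \setminus \{\vec 0\} \subseteq C$ (the convexity of $D$ is what keeps this case under control), then the only constraint is $(x \sqcup a^{\vec 0})|T \in \pat$, and since $\vec 0$ sits at the corner $s^*$ of $T$, the $k$-TEP property gives exactly $k$ admissible symbols $a$ for every filling $x|(T \setminus \{\vec 0\})$; otherwise $\vec 0$ is locally unconstrained in $D$ and all $|\Sigma|$ symbols extend. In either case the count is independent of $x$, so $X$ has equal extension counts for $C$, and the relevant window $T \setminus \{\vec 0\}$ lies in $\ball_r$ with $r$ the diameter of $S$, a bound independent of $C$; this gives uniform equal extension counts for $\C$, whence uniform $\C$-avo by the earlier lemma. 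The main obstacle is the reduction ``globally valid $\iff$ locally valid on convex sets,'' i.e.\ correctly importing and, for unbounded $C$, compactifying the core $k$-TEP theorem of \cite{Sa22d}; the separating-functional and single-translate bookkeeping is then routine.
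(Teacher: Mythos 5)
The paper's own ``proof'' of this lemma is a one-line citation of Lemma~5.2 of the $k$-TEP paper, restricted to the Euclidean case. You instead reconstruct the argument, and your architecture is the right one and essentially the content of the cited lemma: reduce global validity to local validity on the convex set $D = C \cup \{\vec 0\}$ (using the main theorem of \cite{Sa22d} plus compactness for infinite $C$), show that at most one translate of $S$ inside $D$ can constrain the cell $\vec 0$ and that $\vec 0$ sits at a corner of it, and read off the count ($k$ or $|\Sigma|$) from the defining corner property. The uniform window of size $\mathrm{diam}(S)$ also comes out correctly, and the two-case count is indeed independent of $x$.

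There is, however, one step that is false as written. From $\vec 0 \notin \mathrm{conv}(C)$ you conclude that a functional $\ell$ with $\ell(\vec 0) > \ell(c)$ for \emph{all} $c \in C$ (with a positive gap) exists. For infinite $C$ this can fail, because $\mathrm{conv}(C)$ need not be closed and $\vec 0$ may lie in its closure. Concretely, take $C = \{(m,1) \colo m \geq -1\} \cup \{(-1,0)\} \subset \Z^2$: one checks that $C$ and $C \cup \{\vec 0\}$ are both intersections of real convex sets with $\Z^2$, so $C \in \C$, yet $\ell = (a,b)$ with $\ell(\vec 0) = 0 > \ell(-1,0)$ forces $a > 0$, while $0 > am + b$ for all $m \geq -1$ forces $a \leq 0$. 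Note that infinite $C$ is exactly the case of interest, since the avo property is automatic on finite shapes. The repair is immediate and does not disturb your bookkeeping: you only ever apply the separation to cells of the finitely many candidate translates $g + S \subseteq D$ with $\vec 0 \in g + S$; their union minus $\vec 0$ is a finite set $F \Subset C$, the hull $\mathrm{conv}(F)$ is compact and misses $\vec 0$ (as $\mathrm{conv}(F) \cap \Z^d \subset \mathrm{conv}(C) \cap \Z^d = C$), so strict separation from $F$ with a gap does exist, and after a generic small perturbation making $\ell$ injective on $S$ the single-corner, single-translate argument goes through verbatim. With that correction the proof is complete.
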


\begin{proof}
This is the restriction of Lemma~5.2 in \cite{Sa22} to the Euclidean case.
\end{proof}

\begin{definition}
Let $X \subset \Sigma^G$, and let $0 \in \Sigma$. We say $0$ is a \emph{safe symbol} for $X$ if whenever $y \in \Sigma^G$, and for some $x \in X$ we have $y_g \in \{x_g, 0\}$ for all $g \in G$, then $y \in X$.
\end{definition}

\begin{proposition}
Let $0 \in A$, and suppose $X \subset A^G$ is a subshift of finite type and $0$ is a safe symbol for $X$. Then $X$ is uniformly avo for $\pow(G \setminus \{e_G\})$.
\end{proposition}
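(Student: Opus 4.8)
The plan is to fix a window size $m$ for the SFT $X$ (so $\ball_m$ is a window) and to show that $B = C \cap \ball_{2m}$ is $C$-determining for \emph{every} $C \subset G \setminus \{e_G\}$. Since the radius $2m$ does not depend on $C$, this exhibits a common avoradius and hence the uniform avo property for $\pow(G \setminus \{e_G\})$. By the follower-set characterization of avo-ness proved earlier, it suffices to show $\follow(x|C, e_G) = \follow(x|B, e_G)$ for every $x \in X$. The inclusion $\follow(x|C, e_G) \subseteq \follow(x|B, e_G)$ is automatic from the lemma that $\follow$ is decreasing in its leftmost argument, as $B \subseteq C$; so the entire content lies in the reverse inclusion.

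For the reverse inclusion, suppose $a \in \follow(x|B, e_G)$, witnessed by some $y \in X$ with $y|B = x|B$ and $y_e = a$. I would construct a single point $z \in X$ agreeing with $x$ on $C$ and with $z_e = a$, which witnesses $a \in \follow(x|C, e_G)$. The candidate is the ``clean'' gluing: $z_g = x_g$ for $g \in C$, $z_e = a$, and $z_g = 0$ elsewhere (here $e_G \notin C$, so these prescriptions do not conflict). To verify $z \in X$ I use that $X$ is an SFT with window $\ball_m$: a point lies in $X$ as soon as each of its windows $z|g\ball_m$ appears in $X$. For each $g$ I produce a genuine point of $X$ that agrees with $z$ on $g\ball_m$, and two safe-symbol applications supply them: first $x' \in X$, obtained from $x$ by turning every coordinate outside $C$ into $0$; second $y'' \in X$, obtained from $y$ by turning every coordinate outside $C \cup \{e_G\}$ into $0$. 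Both belong to $X$ directly by the safe-symbol property.

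The main obstacle is the ``seam'': windows $g\ball_m$ that straddle the region near $e_G$ (where the symbol $a$ and the local data of $y$ are relevant) and the far region (where only $x$ on $C$ and the symbol $0$ appear), and across this seam $x$ and $y$ need not be compatible. This is precisely why the radius is taken to be $2m$ rather than $m$. For $g \notin \ball_m$ the window $g\ball_m$ avoids $e_G$, so $z|g\ball_m = x'|g\ball_m$ and we are done. For $g \in \ball_m$ we have $g\ball_m \subseteq \ball_{2m}$, and the choice $B = C \cap \ball_{2m}$ forces $z$ and $y''$ to coincide on all of $\ball_{2m}$: on $C \cap \ball_{2m}$ they both equal $x = y$ (using $y|B = x|B$), at $e_G$ both equal $a$, and at every remaining point of $\ball_{2m}$ both equal $0$. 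Hence $z|g\ball_m = y''|g\ball_m$ appears in $X$. With every window legal we conclude $z \in X$, which completes the reverse inclusion; the determining radius $2m$ being independent of $C$ yields uniform avo-ness.
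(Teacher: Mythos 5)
Your proof is correct and follows essentially the same route as the paper: both glue $x$ on $C$, the candidate symbol at $e_G$, and the safe symbol $0$ everywhere else, then verify local validity by matching each window against one of two points of $X$ obtained from $x$ and $y$ via the safe-symbol property. Your version is merely more explicit about the seam analysis (the choice of radius $2m$ and the case split on whether a window meets $e_G$), where the paper just asks that $d(e_G, C\setminus D)$ exceed the diameter of any forbidden pattern.
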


\begin{proof}
Let $C \subset G \setminus \{e_G\}$ be arbitrary. Let $D \Subset C$ be large enough so that $d(e_G, C \setminus D)$ is larger than the diameter of any forbidden pattern. Suppose $x \in X$ and let $y \in \follow(x|D, e_G)$. Define a new configuration $z$ by
\[ z_g = \begin{cases}
y_g & \mbox{if } g \in D \cup \{e_G\} \\
x_g & \mbox{if } g \in C \\
0 & \mbox{otherwise}.
\end{cases} \]
Note that the first two cases intersect in $D$, but $x|D = y|D$ so the definitions agree.

We claim that $z$ does not contain any forbidden patterns. This is because in every $E$ which is the support of a forbidden pattern, $z|E$ agrees with either the configuration $x|C \sqcup 0^{G \setminus C}$ or $y|D \cup \{e_G\} \sqcup 0^{G \setminus (D \cup \{e_G\})}$, which are legal by the assumption that $0$ is a safe symbol.
\end{proof}







Next we consider the topological strong spatial mixing property \cite{Br18}, which generalizes the previous proposition.

\begin{definition}
\label{def:TSSM}
A subshift $X \subset A^G$ satisfies \emph{topological strong spatial mixing} or \emph{TSSM} with gap $n \in \N$, if for any disjoint sets $U, V, S \Subset G$ such that $d(U, V) \geq n$, and for every $u \in A^U, v \in A^V$ and $s \in A^S$, 
\[ u \sqcup s \sqsubset X, s \sqcup v \sqsubset X \implies u \sqcup s \sqcup v \sqsubset X \] 
\end{definition}

Brice\~no defines this in the case of $\Z^d$ in \cite{Br18}, but the definition makes sense in any group. He does not state that $U, V, S$ are disjoint, but this does not change the definition.

\begin{theorem}
\label{thm:TSSM}
A subshift has topological strong spatial mixing if and only if it is uniformly avo for $\pow(G \setminus \{e_G\})$.
\end{theorem}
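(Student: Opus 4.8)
The plan is to prove both implications through the follower-set reformulation of the avo property together with its shift-invariant ``at $g$'' form: uniform avo with avoradius $r$ says exactly that for every $g \in G$, every $C \subset G \setminus \{g\}$, and every $x \in X$ one has $\follow(x|C, g) = \follow(x|(C \cap g\ball_r), g)$, so that the legal symbols at a site $g$ are determined by the contents of $g\ball_r$. The one mechanism driving everything is that a region $V$ with $d(g,V) > r$ is invisible to the follower set at $g$, because then $V \cap g\ball_r = \emptyset$.

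For the direction TSSM $\Rightarrow$ uniform avo, I would show that if $X$ has TSSM with gap $n$, then $n$ is a common avoradius. Fix $C \subset G \setminus \{e_G\}$, set $B = C \cap \ball_n$, and take $x \in X$. The inclusion $\follow(x|C, e_G) \subset \follow(x|B, e_G)$ is automatic (the follower set is decreasing in its first argument), so I only need the reverse: given $a \in \follow(x|B, e_G)$, show $a \in \follow(x|C, e_G)$. For any finite $V' \subset C \setminus \ball_n$ I apply TSSM with $U = \{e_G\}$, $u = a^{e_G}$, $S = B$, $s = x|B$, and $v = x|V'$; the three shapes are disjoint, the hypotheses $u \sqcup s \sqsubset X$ and $s \sqcup v \sqsubset X$ hold, and every element of $C \setminus \ball_n$ is at distance exceeding $n$ from $e_G$, so $d(U,V') \geq n$. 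TSSM then yields $a^{e_G} \sqcup x|(B \cup V') \sqsubset X$. Letting $V'$ range over all finite subsets of $C \setminus \ball_n$, the closed sets $\{y \in X : y_{e_G} = a,\ y|(B \cup V') = x|(B \cup V')\}$ have the finite intersection property, so by compactness of $X$ their intersection contains some $y$, which extends $x|C$ by $a$ at $e_G$; hence $a \in \follow(x|C, e_G)$.

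For the converse, suppose $X$ is uniformly avo with avoradius $r$; I claim TSSM holds with gap $n = r+1$. Given disjoint $U, V, S \Subset G$ with $d(U,V) \geq r+1$ and patterns with $u \sqcup s \sqsubset X$ and $s \sqcup v \sqsubset X$, I would enumerate $U = \{g_1, \dots, g_m\}$ and prove by induction on $i$ that $u_i \sqcup s \sqcup v \sqsubset X$, where $u_i = u|\{g_1, \dots, g_i\}$; the base case $i = 0$ is $s \sqcup v \sqsubset X$ and the case $i = m$ is the desired conclusion. In the inductive step I extend the valid pattern $u_{i-1} \sqcup s \sqcup v$ to the site $g_i$ with symbol $u(g_i)$. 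Since $d(g_i, V) \geq r+1$ we have $V \cap g_i\ball_r = \emptyset$, so the radius-$r$ neighborhood of $g_i$ sees only $U_{i-1} \cup S$, and applying the ``at $g_i$'' identity twice gives $\follow(u_{i-1} \sqcup s \sqcup v,\, g_i) = \follow((u_{i-1} \sqcup s)|(g_i\ball_r),\, g_i) = \follow(u_{i-1} \sqcup s,\, g_i)$. The required membership $u(g_i) \in \follow(u_{i-1} \sqcup s,\, g_i)$ is immediate, since $u_i \sqcup s$ is a subpattern of $u \sqcup s \sqsubset X$. This closes the induction.

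Both directions are genuinely short. The step I expect to demand the most care is the inductive gluing in the converse: one must check that the ``invisible $V$'' cancellation survives at every stage, i.e.\ that the previously placed sites $U_{i-1}$ are allowed to influence the follower set at $g_i$ (they lie within radius $r$ and are part of the determining data) while $V$ never does (it stays outside $g_i\ball_r$ thanks to the gap). In the forward direction the only subtlety is the passage from finite $V'$ to the possibly infinite set $C$, which requires the compactness/finite-intersection argument rather than a single invocation of TSSM.
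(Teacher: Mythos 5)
Your proof is correct and follows essentially the same route as the paper's: the forward direction applies TSSM to peel off the far part of $C$ and concludes by compactness, and the converse glues in one site at a time using the avo property and the fact that the set across the gap lies outside the relevant ball. The only cosmetic difference is that you interchange the roles of $U$ and $V$ in both directions (growing $u$ site by site rather than $v$, and putting $e_G$ in $U$ rather than $V$), which is immaterial by the symmetry of the TSSM definition; your choice of gap $r+1$ also quietly fixes a harmless off-by-one in the paper's converse.
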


\begin{proof}
Suppose a subshift has strong spatial mixing with gap $n$. Let $C \Subset G \setminus \{e_G\}$ be arbitrary, and let $B = C \cap \ball_n$. Suppose $x \in X|C$ and $y \in X|B \cup \{e\}$ with $x|B = y|B$. We should show $x \sqcup y_e^e \sqsubset X$. Pick large $m$, and take $U = (C \cap \ball_m) \setminus B$, $V = \{e_G\}$, $S = B$, $u = x|U$, $s = x|S$, $v = y_e^e$. Then the assumptions of TSSM are easy to check and we thus have $u \sqcup s \sqcup v = x|(U \cup S) \sqcup y_e^e \sqsubset X$. Since $m$ was arbitrary, we conclude that $x \sqcup y_e^e \sqsubset X$ by compactness.

Next suppose $X \subset \Sigma^G$ is uniformly $C$-avo with avoradius $n$ for all sets $C \subset G \setminus \{e_G\}$. Pick $n$ as the gap, and consider any disjoint sets $U, V, S \Subset G$ such that $d(U, V) \geq n$, and patterns $u \in A^U, v \in A^V$ and $s \in A^S$ such that $u \sqcup s \sqsubset X, s \sqcup v \sqsubset X$. Write $V = \{p_1, \ldots, p_m\}$.

In particular $u \sqcup s \sqsubset X, s \sqcup v_{p_1}^{p_1} \sqsubset X$, so the uniform avo property applied for the set $U \cup S$ at $p_1$, gives us $u \sqcup s \sqcup v_{p_1}^{p_1} \sqsubset X$ because $p_1 \ball_n \cap U \cup S \subset S$ meaning $s \sqcup v_{p_1}^{p_1} \sqsubset s \sqcup v \sqsubset X$ from the assumption.

We can proceed by induction, observing that if
\[ u \sqcup s \sqcup v_{p_1}^{p_1} \sqcup \cdots \sqcup v_{p_k}^{p_k} \sqsubset X, \]
then because
\[ s \sqcup v_{p_1}^{p_1} \sqcup \cdots \sqcup v_{p_k}^{p_k} \sqcup v_{p_{k+1}}^{p_{k+1}} \sqsubset X \]
and $p_{k+1} \ball_n \cap (U \cup S \cup V) \subset S \cup V$, we can apply the avo property for the set $U \cup S \cup \{p_1, \ldots, p_k\}$ at $p_{k+1}$ to deduce
\[ u \sqcup s \sqcup v_{p_1}^{p_1} \sqcup \cdots \sqcup v_{p_k}^{p_k} \sqcup v_{p_{k+1}}^{p_{k+1}} \sqsubset X. \]
We eventually obtain $u \sqcup s \sqcup v$ as desired.
\end{proof}

The following is related to Lemma~\ref{lem:ZCase} where one-sided intervals were used to characterize SFTs. Here $\C$ is the set of inductive intervals that are restricted to being negative in one axis (as we will see when we give the general definition). The proposition shows that in the two-dimensional case, this does not lead to $X$ being an avoshift for the two-sided version of $\C$.

Here, a \emph{cellular automaton} is a shift-commuting continuous function $f : \Sigma^{\Z^d} \to \Sigma^{\Z^d}$. A \emph{neighborhood} is $N \Subset \Z^{d}$ such that $x \mapsto f(x)_{\vec 0}$ is a function of $x|N$ (which always exists by continuity). The function $x|N \mapsto f(x)_{\vec 0}$ is called a \emph{local rule} for $f$. A cellular automaton is \emph{reversible} if it is bijective (this is equivalent to any of the following: injectivity, being a homeomorphism, or having a cellular automaton inverse $f^{-1}$). 

\begin{proposition}
\label{prop:Spacetime}
Let $f : \Sigma^\Z \to \Sigma^\Z$ be a surjective cellular automaton. Let $X$ be its spacetime subshift $\{x \in \Sigma^{\Z^2} \colo \forall i: x_{i+1} = f(x_i)\}$ where $x_i \in \Sigma^\Z$ is defined by $(x_i)_j = x_{j,i}$. Let $\C$ consist of sets $(I_1 \times \{0\}) \cup \Z \times I_2$ where $I_1$ is an interval of one of the forms $(-\infty,-1], [-n,-1], \emptyset, [1, n], [1, \infty)$ and $I_2$ is an interval of one of the forms $(-\infty, 1], [-n,-1]$. Then
\begin{itemize}
\item $X$ is always a $\C$-avoshift,
\item if $f$ is reversible, then $X$ is avo for the sets $\C$ and for the sets $-\C = \{-C \colo C \in \mathcal{C}\}$, and
\item if $\Sigma = \{0,1,2\}$ and $f$ is the cellular automaton with neighborhood $\{0,1\}$ and local rule given by the rules $F(2, a) = 2$ and $F(a, b) = a + b \text{ mod 2}$ otherwise (taken in $\{0,1\}$), then $X$ is not a $(-\C)$-avoshift.
\end{itemize}
\end{proposition}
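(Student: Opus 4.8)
The plan is to handle the three assertions separately, using throughout the single structural feature of a spacetime subshift: in any $y \in X$ each row is the $f$-image of the one below it, i.e.\ $y_{j,i+1} = F(y_{j,i}, y_{j+1,i})$. For the first assertion, every $C \in \C$ contains the full row $\Z \times \{-1\}$, since $-1$ lies in each admissible $I_2$; as $f$ has neighborhood $\{0,1\}$, the origin value of any $y \in X$ is $F(y_{0,-1}, y_{1,-1})$, a function of $y$ on the two-element set $B = \{(0,-1),(1,-1)\} \Subset C$. Hence $\follow(y|C, e_G)$ is the singleton $\{F(y_{0,-1},y_{1,-1})\}$, already determined by $y|B$, which is the avo property (indeed equal extension counts $1$); this uses only the neighborhood of $f$. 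For the second assertion, avo for $\C$ is immediate, and avo for $-\C$ is symmetric: reversibility makes $f^{-1}$ a cellular automaton with some neighborhood $N'$, every $-C \in -\C$ contains the full row $\Z \times \{1\}$ (as $1 \in -I_2$), and since row $0$ is $f^{-1}$ of row $1$ the origin value is a function of $\{(j,1) : j \in N'\} \Subset -C$.

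The third assertion is where the real work lies. I would fix the single shape $-C = \Z \times [1,\infty) \in -\C$ (from $I_1 = \emptyset$, $I_2 = (-\infty,-1]$) and show no finite $B \Subset -C$ is determining. Taking $I_1 = \emptyset$ is essential: if $-C$ contained any cell of row $0$ adjacent to the origin, the relation $y_{j,1} = F(y_{j,0}, y_{j+1,0})$ together with the known future would read off the origin from a bounded window, so $X$ really would be avo for such shapes. Given a candidate $B$, choose $R$ with $B \subseteq [-R,R] \times [1,R]$ and put $c = R+2$. I then exhibit two points of $X$ that agree on $B$ but differ at the origin, the first having its origin pinned by its whole future. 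Let $x$ be the time-invariant point all of whose rows equal $w$, where $w$ is $0$ except $w_c = 2$; this is legal since $w$ is a fixed point of $f$ (using $F(2,0)=2$, $F(0,2)=0$). Let $x'$ have row $0$ equal to $1^{\Z}$, all rows $\geq 1$ equal to $0^{\Z}$ (using $F(1,1)=0$), and any backward orbit as its past, which exists because $f$ is surjective (easily checked for this rule). Both restrict to $0$ on $B$, since column $c$ and all of row $0$ lie outside $B$, yet $x_{e_G}=0$ and $x'_{e_G}=1$.

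The crux --- and the step I expect to be the main obstacle to state cleanly --- is that $\follow(x|(-C), e_G) = \{0\}$: the distant $2$-column anchors the reconstruction of the present from the future. Indeed any $y \in X$ with $y|(-C) = x|(-C)$ has its row $1$ equal to $w$, and solving $f(y_0) = w$ leftward from column $c$ forces $y_0 \equiv 0$ on $(-\infty, c-1]$ --- because only a $2$ maps to a $2$ and $F(a,0)=0$ only for $a=0$ --- whence $y_{e_G}=0$. Therefore $1 = x'_{e_G} \in \follow(x|B, e_G)$ while $1 \notin \follow(x|(-C), e_G)$; since $B \subseteq -C$ yields $\follow(x|(-C),e_G) \subseteq \follow(x|B,e_G)$ by monotonicity of $\follow$, the inclusion is strict, so $B$ is not determining. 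As $B$ was arbitrary, $X$ is not $(-C)$-avo and a fortiori not $(-\C)$-avo. The genuine difficulty the construction is designed to expose is that, given the future, the origin is encoded only in a spatially delocalized way --- a parity sum out to the nearest anchoring $2$-column --- so that placing the anchor beyond any prescribed window defeats that window while the full future still determines the origin.
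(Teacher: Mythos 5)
Your proof is correct and follows essentially the same route as the paper's: items (1)--(2) read the origin off the adjacent row via the local rule of $f$ (resp.\ $f^{-1}$), and item (3) rests on the same counterexample idea of a single $2$ placed in the future far to the right of any candidate window, which forces the origin to $0$ by leftward propagation of the parity constraint while the all-zero window still admits a $1$ there. The only cosmetic differences are that you use the full-future shape $\Z\times[1,\infty)$ where the paper uses the single row $\Z\times\{1\}$, and that in item (1) you should argue with a general neighborhood $N$ of $f$ rather than $\{0,1\}$ (as your closing remark already indicates) and dispose of the degenerate case $I_2=\emptyset$ by surjectivity of $f$, as the paper does.
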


\begin{proof}
For the first item, consider an inductive interval with axis intervals $(I_1, I_2)$. If $I_2 \neq \emptyset$, then the local rule of $f$ can be used to determine the unique symbol used at $\vec 0$ from the contents of $[-n, n] \times \{-1\} \subset \Z \times I_2$ if $f$ has neighborhood $[-n, n]$. If $I_2 = \emptyset$, any symbol is legal since $X|\Z \times \{0\}$ is just full shift on $\Sigma$ due to the surjectivity of $f$.

For the second item, if $f$ is reversible we can use the same argument for $f^{-1}$.

For the third item, consider the inductive interval with axis intervals $(I_1, I_2)$ where $I_2 = \{1\}$ and $I_1 = \emptyset$, i.e.\ the set $C = \Z \times \{1\}$. then we cannot determine from any finite subset of the all-zero pattern $0^C$ what the possible symbols at $(0,0)$ are (i.e.\ we cannot locally compute the set of possible symbols at the origina of $f$-preimages of $0^\Z$), since for $0^C$ either symbol can appear, but if we change a $0$ to $2$ far to the right, then the symbol $0$ becomes forced.
\end{proof}

It is a nice exercise (but possibly not an easy one) to show that in the previous proposition (even without assuming $f$ surjective), the spacetime subshift $X$ is an avoshift for $\C \cup -\C$ (the set of all inductive intervals) if and only if $f$ is stable (reaches its limit set in finitely many steps) and $f$ is a constant-to-one (equivalently, open) endomorphism of its limit set.

Next, we show that the avo property is also related to the topological Markov property. The reason we include this is that in \cite{BaGoMaTa20} the authors are able to prove this property for abelian group shifts on many groups, and some interesting properties of group shifts can be deduced by only using the topological Markov property. Specifically, this property suffices to show some interesting measure-theoretic properties. It is a much weaker property than being SFT; even on $\Z$, strong TMP subshifts need not be SFT, although they are sofic \cite{ChHaMaMePa14}, and on $\Z^2$, there are uncountably many subshifts with strong TMP, so they are far from even being sofic. Thus one cannot expect decidability results. 

\begin{definition}
Let $X \subset A^G$ be a subshift. We say $X$ has the \emph{topological Markov property} if for all $B \Subset G$ there exists $C \Subset G$ containing $B$ such that for all $x, y \in X$ with $x|C\setminus B = y|C \setminus B$, the point $z \in A^G$ defined by $z|C = x|C$ and $z|G \setminus B = y|G \setminus B$ is in $X$. If there exists $S \Subset G$ such that we can always take $C = BS$, then $X$ has the \emph{strong topological Markov property}.
\end{definition}

\begin{proposition}
\label{prop:TMP}
A subshift is (uniformly) avo for the family of cofinite subsets of $G \setminus \{e_G\}$ if and only if it has the (strong) TMP.
\end{proposition}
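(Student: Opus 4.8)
The plan is to translate both the avo property and the (strong) TMP into statements about follower sets and then to match them cell by cell. Throughout I would use the follower-set reformulation proved earlier, namely that $X$ is $C$-avo with determining set $B$ exactly when $\follow(x|C,e_G)=\follow(x|B,e_G)$ for every $x\in X$, together with its shifted form ``$B$ is $C$-determining at $g$'', the monotonicity of $\follow$ in its first argument, and the fact that any intermediate set between a determining set and $C$ is again determining. A cofinite subset of $G\setminus\{e_G\}$ is exactly a set $G\setminus F$ with $F\Subset G$ and $e_G\in F$; I will call $F$ the \emph{hole}. Since $\follow$ only decreases as its domain grows, in every determining claim the inclusion $\follow(x|C,e_G)\subseteq\follow(x|B,e_G)$ is automatic, so the entire content is the reverse inclusion, i.e.\ manufacturing a global extension out of a local one.

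For the direction TMP $\Rightarrow$ avo, I would fix a cofinite set $C=G\setminus F$ and apply the TMP with hole $F$, obtaining a finite $C_0\supseteq F$ with the gluing property. I claim the collar $B:=C_0\setminus F\Subset C$ is $C$-determining. Given $a\in\follow(x|B,e_G)$, pick a witness $y\in X$ with $y|B=x|B$ and $y_{e_G}=a$, and feed $y$ as the ``hole source'' and $x$ as the ``outside source'' into the TMP: they agree on $C_0\setminus F=B$ by construction, so the glued point equals $x$ on $C$ and equals $y$ on $F$, hence carries $a$ at $e_G\in F$. This witnesses $a\in\follow(x|C,e_G)$.

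For the direction avo $\Rightarrow$ TMP, I would fix a hole $B_0=\{g_1,\dots,g_k\}$ and fill its cells one at a time. Writing $R_i=\{g_i,\dots,g_k\}$, I build partial configurations that agree with $x$ on the already-filled cells $\{g_1,\dots,g_{i-1}\}$ and with $y$ off $B_0$, and prove by induction that each is globally valid, the last being the desired glued point. The step from $i$ to $i+1$ uses avo at $g_i$ for the cofinite set $G\setminus R_i$, which crucially excludes all not-yet-filled cells, with determining set $B_i\Subset G\setminus R_i$. The key observation is that on $B_i$ the current configuration already agrees with $x$: indeed $B_i$ meets only already-filled cells (where the configuration is $x$) and cells outside $B_0$ (where $x$ and $y$ coincide, once we arrange $B_i\setminus B_0\subseteq C_0\setminus B_0$). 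Hence $x_{g_i}$, a legal extension for $x|B_i=x|(G\setminus R_i)$, is also a legal extension for the current configuration, and the induction proceeds. Taking $C_0:=B_0\cup\bigcup_i B_i$ then works.

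Finally, the uniform statements come from tracking radii. If $X$ is uniformly avo with common avoradius $n$, each $B_i$ above may be taken as $(G\setminus R_i)\cap g_i\ball_n\subseteq g_i\ball_n$, so $C_0\subseteq B_0\ball_n$; since the TMP property for a set $C_0$ persists for every larger set, this yields the strong TMP with $S=\ball_n$. The hard part will be the converse, strong TMP $\Rightarrow$ uniformly avo, and I expect it to be the main obstacle. Rerunning the collar argument with $C_0=FS$ handles holes $F$ contained in a fixed ball: then the collar $FS\setminus F$ lies inside $C\cap\ball_n$ for $n$ comparable to the radius of $S$, and the intermediate-set lemma upgrades $C\cap\ball_n$ to a determining set. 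The delicate case is a hole $F$ that is large compared with $S$: there the collar $FS\setminus F$ can escape every fixed ball $\ball_n$, so one cannot simply shrink the determining set to $C\cap\ball_n$, and establishing a genuinely uniform avoradius requires arguing that a fixed-width collar controls the value at $e_G$ no matter how the hole extends. This is precisely the point at which the uniform strength of the hypothesis $C_0=BS$ must be leveraged, and it is where I would concentrate the real work.
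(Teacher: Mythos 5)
Your treatment of the unparenthesized equivalence is correct and follows essentially the same route as the paper. For TMP $\Rightarrow$ avo you use the same collar argument (the paper claims the determining set ``$C\setminus\{e_G\}$'' for the TMP-set $C$ of the hole; your $C_0\setminus F$ is the cleaner, literally correct version of the same set). For avo $\Rightarrow$ TMP you fill the hole cell by cell exactly as the paper does: at each $g_i$ you invoke the avo property of the cofinite set consisting of the outside together with the already-filled cells, and you take $C_0$ to be the hole union the translated determining sets; your verification that the current configuration agrees with $x$ on $B_i$ is the same computation the paper performs with its sets $E_i$. Your derivation of strong TMP from uniform avo by taking $B_i\subset g_i\ball_n$, hence $C_0\subset B_0\ball_n$, is also exactly what the paper means by ``retracing the proof'' in that direction.

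The gap is precisely where you located it: you do not prove strong TMP $\Rightarrow$ uniformly avo, and you should know that this direction cannot be completed as stated. Retracing the collar argument with $C_0=FS$ yields the determining set $FS\setminus F$, which, as you observe, escapes every fixed ball when the hole $F$ is large; and no further work repairs this, because the implication is false under the paper's definition of a uniform avoradius. Take $X=\{0^G,1^G\}$ on any infinite finitely generated group. It has strong TMP with $S=\ball_1$: for nonempty finite $B$ the collar $BS\setminus B$ is nonempty, so two points of $X$ agreeing there are equal and the glued point is trivially in $X$. Yet for the cofinite set $C=G\setminus \ball_{n+1}$ we have $C\cap\ball_n=\emptyset$, and $\emptyset$ is not $C$-determining, since $\follow(0^C|\emptyset,e_G)=\{0,1\}$ while $\follow(0^C,e_G)=\{0\}$; so no $n$ is a common avoradius. (The paper itself records that this $X$ fails to be uniformly $\pow(G\setminus\{e_G\})$-avo, and the witnessing sets are cofinite.) So your instinct that the ``real work'' lies in this direction is sound, but the conclusion to draw is that only the implication from uniform avo to strong TMP holds; the paper's one-line ``by retracing the proof'' does not establish the converse either, and the parenthetical half of the proposition needs to be weakened accordingly.
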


\begin{proof}
Suppose first that $X$ is avo for the cofinite subsets. Let $B \Subset G$ be arbitrary. Write $B = \{g_1, g_2, \ldots, g_n\}$ without repetition. For all $i$ the set $g_i^{-1}((G \setminus B) \cup \{g_1, \ldots, g_{i-1}\}) = F_i$ is a cofinite subset of $G \setminus \{e_G\}$. Thus, for any such $i$ there exists a finite set $D_i$ such that $D_i \Subset F_i$ and such that 
\[ \follow(x|D_i, e) = \follow(x|F_i, e) \]
for all $x \in X$. Writing $E_i = g_iD_i$, translating by $g_i$, and substituting $y = g_ix$, we deduce
\[ \follow(y|E_i, g_i) = \follow(y|(G \setminus B) \cup \{g_1, \ldots, g_{i-1}\}, g_i) \]
for all $y \in X$. Let $E = \bigcup_i E_i$. We claim that we can pick $C = B \cup E$ in the definition of TMP.

Namely, suppose $x, y \in X$ with $x|C\setminus B = y|C \setminus B$. We prove by induction (in finitely many steps) that the point $z \in A^G$ defined by $z|C = x|C$ and $z|G \setminus B = y|G \setminus B$ is in $X$, by showing that $z|(G \setminus B) \cup \{g_1, \ldots, g_i\}$ is globally valid. For $i = 0$, this is clear since $y \in X$. Now assume $z|(G \setminus B) \cup \{g_1, \ldots, g_{i-1}\}$ is globally valid. We have
\[ \follow(z|E_i, g_i) = \follow(z|(G \setminus B) \cup \{g_1, \ldots, g_{i-1}\}, g_i) \]
from the above, so $x_{g_i}$ is a legal symbol for extending $z|(G \setminus B) \cup \{g_1, \ldots, g_{i-1}\}$ to $g_i$ if and only if $z|E_i \cup \{g_i\}$ appears in $X$. But since $E_i \subset E \subset C$, we have $z|E_i \cup \{g_i\} = x|E_i \cup \{g_i\} \sqsubset X$.

Conversely, suppose we have the topological Markov property, and let $D \subset G \setminus \{e_G\}$ be any cofinite set, meaning $D = G \setminus B$ for some finite $B \ni e$. We will show that $X$ is $D$-avo. For this, let $C \Supset B$ be as in the definition of TMP for the set $B$. We claim that $C \setminus \{e_G\}$ is $D$-determining. Namely, suppose $y \in X|D$. We show the non-trivial direction
\[ y|(C \setminus \{e\}) \sqcup a^e \sqsubset X \implies y \sqcup a^e \sqsubset X. \]
For this, suppose $y|(C \setminus \{e\}) \sqcup a^e \sqsubset X$, say $x \in X$ satisfies $x|C = y|(C \setminus \{e\}) \sqcup a^e$.

Note that $x|C \setminus B = y|C \setminus B$, since $e \in B$. Thus by TMP, the unique point $z$ with $z|C = y|C, z|G \setminus B = y|G \setminus B$ is in $X$. But clearly $z|D \cup \{e\} = y \sqcup a^e$, so $y \sqcup a^e \sqsubset X$ as desired.

By retracing the proof one sees that the uniform avo property similarly corresponds to strong TMP.
\end{proof}


\section{Being SFT in an area}

In this section, we introduce the idea of a set of configurations in a subset of a group being SFT. Let $X \subset A^G$ be a subshift, and let $S \subset G$.

We say $X$ is \emph{SFT on $S$} if there is a finite set of finite patterns $\pat$ such that $x \in X|S$ if and only if $x \in A^S$ and none of the patterns in $\pat$ appear as a subpattern in $x$. When $X \subset A^S$ and $S \subset G$ with $G$ clear from context, we also say directly that $X$ is \emph{SFT} if there is a finite set of forbidden patterns with domains $D \Subset G$, such that $x \in X$ if and only if $x \in A^S$, and $x$ does not contain a translate of one of these patterns whose domain fits inside $S$ (even if $X$ might not extend to any subshift on $G$). If $\C \subset \pow(G)$, then we say $X$ is \emph{uniformly SFT on the sets $\C$} or simply \emph{uniformly $\C$-SFT} if there is a finite set of finite patterns $\pat$ that define all the restrictions $X|C$ for $C \in \C$. In each case a \emph{window} is a set $M \Subset G$ which can contain the domains of all the defining forbidden patterns, and a \emph{window size} is $m$ such that $\ball_m$ is a window.

We make some simple general observations about this notion. These results are not used in the following sections. Nevertheless, the first two propositions may be useful for understanding the notion, and the reader may find the third observation of independent interest.

\begin{proposition}
\label{prop:AllOnFinite}
Every subshift is SFT on any (single) finite area.
\end{proposition}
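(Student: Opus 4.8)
The plan is to simply forbid every $S$-pattern that fails to be globally valid. Since both the alphabet $A$ and the area $S$ are finite, the set $A^S$ of all patterns with domain $S$ is finite, and hence so is its subset
\[ \pat = A^S \setminus (X|S), \]
the collection of $S$-patterns that do not appear in $X$. This is a finite set of finite patterns, so all that remains is to verify that $\pat$ defines $X|S$, i.e.\ that for $x \in A^S$ we have $x \in X|S$ if and only if no $P \in \pat$ appears in $x$.

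For the ``only if'' direction, I would first record the elementary fact that, because $S$ is finite and left translation is a bijection of $G$, any $g \in G$ with $gS \subset S$ in fact satisfies $gS = S$. Now suppose $x \in X|S$ but some $P \in \pat$ occurs in $x$, say $gP$ is a subconfiguration of $x$ for some $g \in G$. Then $gS \subset S$, hence $gS = S$ and $x = gP$, so $P = g^{-1}x$. Writing $x = y|S$ with $y \in X$ and using that $g^{-1}S = S$ together with shift invariance, $P = g^{-1}(y|S) = (g^{-1}y)|S \in X|S$, contradicting $P \in \pat$. Thus no forbidden pattern occurs in $x$.

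The ``if'' direction is immediate: if $x \in A^S$ and no $P \in \pat$ occurs in $x$, then in particular $x$ does not occur in itself via the trivial translate $g = e_G$, which forces $x \notin \pat$; since $x \in A^S$, this gives $x \in X|S$.

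I do not expect any real obstacle here: the statement is a pure finiteness phenomenon, and essentially all the (minimal) content of the argument lies in reconciling the translation-invariant notion of pattern occurrence (where $P \sqsubset x$ allows an arbitrary shift $g$) with exact equality of patterns on $S$. The one point requiring a word of care is precisely this reconciliation, handled by the observation that a finite set admits no proper translate into itself, so that the only relevant translates are those in the stabilizer of $S$; and on the stabilizer the shift invariance of $X$ keeps both $X|S$ and $\pat$ invariant, so no inconsistency can arise.
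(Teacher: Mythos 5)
Your proof is correct and takes the same approach as the paper, which simply forbids all $S$-patterns not in $X|S$ and declares the verification trivial. You have merely filled in the (correct) details that the paper omits, namely that a finite set admits no proper translate into itself and that shift-invariance makes $X|S$ coincide with the set of $S$-patterns occurring anywhere in $X$.
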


\begin{proof}
Let $X \subset A^G$ be a subshift, and let $S \Subset G$. Let $\pat$ be all $S$-pattern that do not appear in $X$. Then trivially $\pat$ defines $X|S$.
\end{proof}

\begin{proposition}
A subshift is SFT if and only if it is SFT on every (single) cofinite area.
\end{proposition}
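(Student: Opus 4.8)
The plan is to prove the two implications separately, with the forward direction carrying essentially all the content. For the converse, I would simply observe that $G$ itself is a cofinite area (its complement $\emptyset$ is finite) and $X|G = X$; thus the hypothesis, applied to the area $G$, asserts verbatim that $X|G$ is cut out by finitely many forbidden patterns, which is exactly the statement that $X$ is SFT. So that direction is immediate once one notes that $G$ counts as cofinite.

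For the forward direction, suppose $X$ is SFT with window $\ball_m$ and finite forbidden set $\F$ (each domain $\subseteq \ball_m$), and let $S = G \setminus B$ with $B \Subset G$. I would introduce the finite \emph{collar} $R = (B\ball_{2m}) \setminus B \subseteq S$, the point being that any translate of a forbidden pattern whose domain meets $B$ has domain $\subseteq B\ball_{2m} = B \cup R$ (this is the only diameter estimate to check: a domain point in $B$ forces the whole domain, of diameter $\le 2m$, into $B\ball_{2m}$). Call $q \in A^R$ \emph{completable} if some filling $p \in A^B$ makes $q \sqcup p \in A^{R \cup B}$ free of forbidden patterns meeting $B$, and take as the candidate forbidden set $\pat = \F \cup \{q \in A^R : q \text{ is not completable}\}$, which is finite. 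The claim to establish is that $\pat$ defines $X|S$.

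The crux is a single observation: \emph{a non-completable collar pattern occurs in no point of $X$} (and hence in no configuration of $X|S$). Indeed, if such a $q$ occurred at a translate $h$ in some $y \in X$, then $y|hR = hq$ with $hR \subseteq S$, and the filling read off from $y$ on the corresponding copy $hB$ would exhibit $q$ as completable (since $y$, being in $X$, carries no forbidden patterns), a contradiction; the same computation, using $hR \subseteq S$, shows $q$ cannot appear in any $y|S$ either. This is the step I expect to be the main obstacle, because forbidding the completability patterns \emph{translation-invariantly} (as the definition of ``SFT on $S$'' requires) a priori risks deleting legitimate configurations away from the hole $B$; the observation is exactly what rules this out, so that adding these patterns removes nothing from $X|S$.

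With that in hand the two inclusions are routine. For $X|S \subseteq \{x \in A^S : x \text{ avoids } \pat\}$ I would note that a restriction $y|S$ avoids $\F$ trivially and avoids the completability patterns by the observation above. For the reverse inclusion I would take $x \in A^S$ avoiding $\pat$, use the identity translate of the completability constraint to force $x|R$ to be completable, pick a filling $p \in A^B$, and verify that $x \sqcup p$ is a globally legal point: a forbidden pattern either meets $B$ — hence lies in $B \cup R$ and is killed by the choice of $p$ — or lies entirely in $S$ and is killed because $x$ avoids $\F$. This yields $x = (x \sqcup p)|S \in X|S$, completing the identification $X|S = \{x : x \text{ avoids } \pat\}$ and thus showing $X$ is SFT on $S$.
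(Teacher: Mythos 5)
Your proof is correct and follows essentially the same route as the paper's: both directions match, and for the substantive one you thicken the hole $B$ by twice the window size, control the resulting finite collar with finitely many extra forbidden patterns, and glue a filling of $B$ back onto a legal configuration of $G \setminus B$, using the fact that locally valid points of an SFT are globally valid. The only (harmless) divergence is in which collar patterns get forbidden --- the paper forbids all patterns on subdomains of the thickened $B$ that do not occur globally in $X$, while you forbid the collar patterns admitting no locally valid filling of $B$, a purely combinatorial condition computable from the original forbidden set; this makes your ``easy'' inclusion require the small extra observation that a collar pattern occurring in a point of $X$ is completable, but it makes the defining family effective, and both variants are sound.
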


\begin{proof}
Since $G$ is cofinite in $G$, a subshift SFT on all cofinite sets must itself be an SFT.

For the other direction, let $X \subset A^G$ be an SFT, and let $B \Subset G$. Let $S \subset G$ be a symmetric set containing $e_G$ such that a defining set of forbidden patterns $\pat \subset A^S$ exists. Let $\qat$ be the set of all patterns with domains contained in $BS^2$. If $x \in X$, then clearly $x|G \setminus B$ does not contain an occurrence of any pattern from $\qat$. Conversely, suppose $x \in A^{G \setminus B}$ does not contain any pattern from $\qat$. Then in particular $P = x|BS^2 \setminus B$ appears in $X$, thus there exists a legal pattern $Q \in X|BS^2$ with $Q|BS^2 \setminus B = P$. Define $y \in A^G$ by $y|G \setminus B = x$ and $y|B = P$, so that in fact $y|BS^2 = Q$. If $g \in BS$, then $gS$ is contained in $BS^2$ and thus $y|gS^2 \sqsubset Q \sqsubset X$. If $g \notin BS$, then $gS \subset G \setminus B$, so $y|gS \sqsubset x \in X$. Thus, $y$ does not contain any of the defining forbidden patterns, and we conclude $y \in X$.
\end{proof}


\begin{proposition}
A subshift is uniformly SFT in finite sets of every fixed cardinality if and only if it is topologically $k$-mixing for all $k$.
\end{proposition}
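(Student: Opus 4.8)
The plan is to prove the two implications separately, after first reducing topological $k$-mixing to a statement about gluing patterns. Since cylinders form a basis, $X$ is topologically $k$-mixing for all $k$ if and only if for all patterns $P_1, \ldots, P_k \sqsubset X$ there is $N$ such that whenever $g_1, \ldots, g_k \in G$ place the supports pairwise more than $N$ apart (i.e.\ $d(g_i \dom(P_i), g_j \dom(P_j)) > N$ for $i \neq j$), the glued pattern $g_1 P_1 \sqcup \cdots \sqcup g_k P_k$ lies in $\lang(X)$. This support-based formulation is the invariant content of the definition, and I will use it in both directions.

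For the direction that uniform SFTness on all fixed cardinalities implies $k$-mixing, fix $k$ and patterns $P_1, \ldots, P_k \sqsubset X$, and let $N' = \sum_i |\dom(P_i)|$. Choose a common window radius $r$ that works for the uniform SFT descriptions of $X|C$ for all $|C| \le N'$ (the maximum of finitely many windows). Now place translates $g_i P_i$ with pairwise support-distance $> 2r$, and let $W$ be the glued configuration on $C = \bigcup_i g_i \dom(P_i)$, so $|C| \le N'$. By the uniform SFT property, the finite defining set $\pat$ for $X|C$ has all domains inside $\ball_r$, hence of diameter at most $2r$, and $W \in X|C$ iff $W$ contains no occurrence of a pattern from $\pat$. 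Since the pieces $g_i \dom(P_i)$ are pairwise more than $2r$ apart, any occurrence of such a pattern is contained in a single piece $g_i \dom(P_i)$, where $W$ agrees with a translate of $P_i \sqsubset X$; as forbidden patterns do not appear in $X$, no such occurrence exists. Thus $W \in X|C$, i.e.\ $W \sqsubset X$, which is the desired gluing with $N = 2r$.

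For the converse, fix $n$; I must produce a single finite set $\pat_n$ describing $X|C$ for every $|C| = n$. The natural candidate is $\pat_n = \{Q : \dom(Q) \subseteq \ball_r,\ Q \not\sqsubset X\}$ for a suitable radius $r$; this is finite, and a configuration avoids $\pat_n$ exactly when each of its subpatterns supported in a translate of $\ball_r$ lies in $\lang(X)$. The inclusion $X|C \subseteq \{w : w \text{ avoids } \pat_n\}$ is immediate, so the content is the reverse inclusion for some fixed $r$. I would prove this by contradiction and compactness: if no $r$ works, pick for each $r$ a set $C_r$ with $|C_r| = n$ and $w_r \in A^{C_r}$ with $w_r \notin X|C_r$ such that every subpattern of $w_r$ supported in some translate of $\ball_r$ lies in $\lang(X)$. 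After normalizing by a translation and passing to a subsequence, all pairwise distances between the $n$ marked points converge in $\N \cup \{\infty\}$; grouping the points whose mutual distances stay bounded yields $m \le n$ \emph{clumps}, with pairwise separation tending to infinity. Because $G$ is discrete, along the subsequence the relative positions and symbols inside each clump are eventually constant, so each clump stabilizes to a fixed pattern $Q_\ell$, and once $r$ exceeds its bounded diameter the hypothesis forces $Q_\ell \sqsubset X$. Topological $m$-mixing applied to $Q_1, \ldots, Q_m$ yields a gap $N^*$; for large $r$ the clumps in $w_r$ are genuine translates of the $Q_\ell$ placed more than $N^*$ apart, so the gluing formulation gives $w_r \sqsubset X$, i.e.\ $w_r \in X|C_r$, contradicting $w_r \notin X|C_r$.

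The main obstacle is precisely this converse: the separation needed for mixing depends on the patterns being glued, while the clumps have diameters that a priori grow with the window $r$, so one cannot fix $r$ by a naive inequality between a window size and a mixing gap. The compactness argument breaks this apparent circularity, crucially using that convergence of positions in a discrete group means eventual equality, so that the clumps stabilize to \emph{fixed} patterns to which a single mixing gap applies. A minor point to handle carefully is the left/right asymmetry of the word metric on non-abelian $G$ when matching ``$d(g_i,g_j)$ large'' with ``supports far apart'', which is exactly why I phrase $k$-mixing directly via support separation.
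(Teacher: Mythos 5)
Your proof is correct, and while the forward direction coincides with the paper's, your converse takes a genuinely different route. The paper strengthens the claim to ``uniformly SFT on sets contained in a union of $k$ balls of radius $r$'' and proves it by induction on $k$ (for all $r$ simultaneously): in the inductive step, either the $k+1$ balls are pairwise $n$-separated, in which case $(k{+}1)$-mixing glues the finitely many globally legal patterns on $r$-balls directly, or else the equivalence classes under ``distance $<n$'' form at most $k$ clumps, each fitting in a ball of radius $(k{+}1)n+2r$, and the induction hypothesis with this inflated radius applies. Your compactness argument replaces that radius-inflating induction: rather than controlling clump diameters uniformly in advance, you extract along a subsequence of putative counterexamples a stable clump decomposition into finitely many \emph{fixed} patterns, so a single mixing gap for those specific patterns suffices. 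Both arguments confront the same essential difficulty (the mixing gap depends on the patterns being glued, whose diameters are a priori unbounded when points cluster); the paper's induction is in principle more effective and yields the slightly stronger statement about unions of $r$-balls, whereas yours is shorter, non-constructive, and scoped exactly to the fixed-cardinality statement. Two small points: your phrasing of $k$-mixing via support separation is the version the paper's own proof de facto uses (it speaks of ``separation between the centers of the containing $r$-balls''), so the left/right asymmetry you flag is handled consistently with the source; and in the forward direction one should note that forbidden patterns in the uniform description whose domains have cardinality at most $N'$ cannot occur in $X$ at all (any larger ones are irrelevant on a domain of size $N'$), which is the precise reason no forbidden occurrence can sit inside a single piece.
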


To clarify the reading, let $\C_n$ be the set of sets of cardinality $n$. The left-hand-side of the equivalence states that for every $n$, $X$ is uniformly SFT on the sets $\C_n$ (but not necessarily uniformly in $n$).

\begin{proof}
Suppose first that $X \subset A^G$ is uniformly SFT in all finite sets of a fixed cardinality $n$. Let $P_1, \ldots, P_k$ be any finite set of patterns that appear in $X$. Let $n$ be the sum of cardinalities of their supports. Let $\pat$ be a set of forbidden patterns defining the restrictions $X|C$ for all $C \in \C_n$. Let $r_1$ be the maximal norm of any element of the domain of any pattern in $\pat$, and let $r_2$ be the maximal norm of the domain of any pattern from any $P_i$, then the pattern $g_1P_1 \cup \cdots \cup g_kP_k$ cannot contain any pattern from $\pat$ as long as $d(g_i, g_j) > 2r_1 + 2r_2$ (since any such pattern can see at most one translate of a pattern $P_i$, and all of them appear in $X$). Since $k$ was arbitrary, we conclude that $X$ is topologically $k$-mixing for all $k$.

For the converse, we show by induction on $k$ that if $X$ is topologically $k$-mixing for all $k$, then it is uniformly SFT in $\C_{k, r}$, the sets whose support is contained in a (not necessarily disjoint) union of $r$-balls. For $k = 1$, this follows from Proposiiton~\ref{prop:AllOnFinite}, since $\C_{k, r}$ contains only finite sets for any $r$. Suppose now the claim holds up to $k$, and all $r \in \N$. We prove the claim or $k+1$, for an arbitrary $r \in \N$.

By $(k+1)$-mixing, there exists $n$ such that for any $k+1$ many patterns with supports contained in the $r$-ball, and which are globally legal, any union of those patterns whose separation (distance between the centers of the containing $r$-balls) is at least $n$ is also globally legal. Thus, we can find a finite set of forbidden patterns such that the set of patterns with support $C \in \C_{k+1,r}$ are correctly defined, when $C$ can be partitioned into $k+1$ many $r$-balls whose separation is at least $n$. We need to show that we can add enough forbidden patterns so that also the patterns whose domain $C \in \C_{k+1,r}$ cannot be partitioned this way are correctly defined.

But note that for any such $C$, we can break its domain into equivalence classes, by putting two of the $r$-balls in the same class when their distance is less than $n$. Each single equivalence class fits into a $R = ((k + 1)n + 2r)$-ball. Thus, it suffices to apply induction to topological $\ell$-mixing and this choice of $R$.
\end{proof}

\subsection{Connections between the (uniform) avo and uniform SFT properties}
\label{sec:UniformAvo}

In this section, We show that
\begin{itemize}
\item under minor assumptions on $\C$, uniform SFT implies uniform avo,
\item under very strong assumptions on $\C$, avoshift implies uniformly avo, and
\item under intermediate assumptions on $\C$, uniform avo implies uniform SFT. 
\end{itemize}

\begin{definition}
We say a family $\C \subset \pow(G \setminus \{e_G\})$ is \emph{good} if
\[ C \in \C \implies \exists g \in G: g(C \cup \{e_G\}) \in \C. \]
\end{definition}

\begin{lemma}
\label{lem:SFTimpliesAvo}
If $X \subset \Sigma^G$ is uniformly SFT on the sets $\C \subset \pow(G \setminus \{e_G\})$, and suppose $\C$ is good. Then $X$ is uniformly avo on $\C$.
\end{lemma}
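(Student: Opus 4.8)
The plan is to show that a window size for the uniform SFT property doubles into a common avoradius. Let $m$ be such that $\ball_m$ is a window for the uniform SFT property on $\C$, and fix a defining set of forbidden patterns $\pat$ with domains contained in $\ball_m$, so that for every $C \in \C$ we have $X|C = \{y \in \Sigma^C \colo \text{no translate of a pattern of } \pat \text{ with domain inside } C \text{ appears in } y\}$. I claim that $n = 2m$ is a common avoradius, i.e.\ that $B := C \cap \ball_{2m}$ is $C$-determining for every $C \in \C$.

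Fix $C \in \C$ and $x \in X|C$. The direction $x \sqcup a^e \sqsubset X \implies x|B \sqcup a^e \sqsubset X$ is immediate from restriction, so I only need the converse. Suppose $x|B \sqcup a^e \sqsubset X$. The goal is to show $x \sqcup a^e \sqsubset X$, and the key observation is that $x \sqcup a^e \in \Sigma^{C \cup \{e_G\}}$ contains no pattern of $\pat$. Indeed, a translate $hP$ of a forbidden pattern $P$ (with domain $D_P \subset \ball_m$) that avoids $e_G$ would already appear in $x$, which is impossible since $x \in X|C$ and $C \in \C$. On the other hand, if such a translate contains $e_G$, then $e_G \in hD_P$ forces $h \in \ball_m$ (balls are symmetric), whence $hD_P \subset h\ball_m \subset \ball_{2m}$; thus this translate appears inside $(x \sqcup a^e)|\ball_{2m} = x|B \sqcup a^e$, which is again impossible since $x|B \sqcup a^e \sqsubset X$.

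It remains to turn this local legality into global legality, and this is exactly where \emph{goodness} enters. By goodness pick $g \in G$ with $g(C \cup \{e_G\}) \in \C$. Since the condition ``contains no pattern of $\pat$'' is translation-invariant, $g(x \sqcup a^e) \in \Sigma^{g(C \cup \{e_G\})}$ also contains no forbidden pattern; applying the uniform SFT characterization to the set $g(C \cup \{e_G\}) \in \C$ gives $g(x \sqcup a^e) \in X|g(C \cup \{e_G\})$, and translating back yields $x \sqcup a^e \in X|(C \cup \{e_G\})$, i.e.\ $x \sqcup a^e \sqsubset X$. Since $n = 2m$ was chosen independently of $C$, this establishes uniform $\C$-avo.

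The only real subtlety is the one just highlighted: SFTness on $C$ alone controls restrictions to $C$ but says nothing about extending to $e_G$, so without goodness one cannot promote the ``no forbidden pattern'' conclusion into membership of the extended restriction $X|(C \cup \{e_G\})$. The remaining work is the routine window-size bookkeeping showing that only forbidden patterns within $\ball_{2m}$ can interact with the new cell at $e_G$.
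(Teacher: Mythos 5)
Your proof is correct and follows essentially the same route as the paper's: show that $x \sqcup a^e$ is locally $\pat$-valid, then use goodness to translate $C \cup \{e_G\}$ into $\C$ so that local validity upgrades to global validity. If anything, your window-size bookkeeping (taking avoradius $2m$ so that any occurrence of a forbidden pattern touching $e_G$ lies inside $\ball_{2m}$) is slightly more careful than the paper's, which takes $B = \ball_r \cap C$ with $\ball_r$ merely containing all domains.
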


\begin{proof}
Let $\pat$ be a set of finite patterns that defines $X$ on the sets $\C$. Let $x \in \Sigma^C$ be globally valid for $X$. We need to show
\[ x \sqcup a^e \sqcup X \iff x|B \sqcup a^e \sqcup X \]
for some finite $B \Subset C$. Pick $B = \ball_r \cap C$ such that $\ball_r$ contains the domain of every $P \in \pat$. Now if $x|B \sqcup a^e \sqcup X$, then $x|B \sqcup a^e$ does not contain any pattern from $\pat$. Since $x \in X|C$, neither does $x$. Thus, neither does $x \sqcup a^e$. Since $\C$ is good, we have $D = g(C \cup \{e\}) \in \C$ for some $g \in G$, thus $\pat$ defines the restriction $X|D$ as an SFT, thus it defines also $X|g^{-1}D$ since $X$ is shift-invariant. This is the domain of $x \sqcup a^e$, so the locally valid configuration $x \sqcup a^e$ is globally valid in $X$, proving the uniform avo property.
\end{proof}

Recall that a \emph{well-quasi-order} or \emph{wqo} is a preorder $<$ such that every infinite sequence contains a nondecreasing subsequence. We only consider partial orders $<$ (in fact, set containment). Equivalently, a wqo is a preorder that is well-founded (there are no infinite decreasing sequences) and has no infinite antichains (infinite sets of incomparable elements).

\begin{lemma}
\label{lem:CommonRadius}
Let $\C$ be any family of subsets of $G \setminus \{e_G\}$ which is wqo under inclusion, and closed under increasing union. Suppose $X$ is $\C$-avo. Then $X$ is uniformly $\C$-avo.
\end{lemma}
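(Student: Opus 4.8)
The plan is to argue by contradiction, using the well-quasi-order hypothesis to manufacture a single ``worst-case'' set $C \in \C$ whose avoradius then forces a uniform bound. Suppose $X$ is \emph{not} uniformly $\C$-avo, so that no natural number is a common avoradius. Then for every $n \in \N$ we may select $C_n \in \C$ for which $n$ fails to be an avoradius, i.e.\ the finite set $C_n \cap \ball_n$ is not $C_n$-determining. The point of the wqo assumption is to glue these bad witnesses together: the sequence $(C_n)_n$ has a nondecreasing subsequence $C_{n_1} \subset C_{n_2} \subset \cdots$ with $n_1 < n_2 < \cdots$, and since $\C$ is closed under increasing union, the set $C = \bigcup_k C_{n_k}$ again lies in $\C$. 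As $X$ is $\C$-avo, $C$ has \emph{some} avoradius $m$, meaning $B := C \cap \ball_m$ is $C$-determining.

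The crux is then to transport this finite determining set $B$ back down to one of the $C_{n_k}$. Because $B$ is finite (balls are finite in our finitely generated groups) and $C$ is the increasing union of the $C_{n_k}$, we have $B \subset C_{n_k}$ for all large $k$; I would fix such a $k$ chosen large enough that moreover $n_k \geq m$, so that $\ball_m \subset \ball_{n_k}$. A short check gives $B = C \cap \ball_m = C_{n_k} \cap \ball_m$. Now the lemma on intermediate sets applies: from ``$B$ is $C$-determining'' together with $B \subset C_{n_k} \subset C$, its first half yields that $B$ is $C_{n_k}$-determining.

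To finish, I would invoke the second half of that same intermediate-set lemma, this time inside $C_{n_k}$: since $B$ is $C_{n_k}$-determining and $B \subset C_{n_k} \cap \ball_{n_k} \subset C_{n_k}$ with $C_{n_k} \cap \ball_{n_k}$ finite, the intermediate finite set $C_{n_k} \cap \ball_{n_k}$ is $C_{n_k}$-determining as well. This says exactly that $n_k$ is an avoradius for $C_{n_k}$, contradicting the defining property of $C_{n_k}$. The contradiction establishes that a common avoradius exists, i.e.\ $X$ is uniformly $\C$-avo.

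I expect no serious analytic obstacle here; the argument is essentially a compactness-free reorganization driven by the two structural hypotheses. The only delicate point is the bookkeeping in the middle step: one must pick the index $k$ so that $B \subset C_{n_k}$ and $n_k \geq m$ hold \emph{simultaneously}, and then verify the equality $C \cap \ball_m = C_{n_k} \cap \ball_m$ that lets us identify $B$ as the relevant determining set. Once these inclusions are lined up, both conclusions follow purely from the intermediate-set lemma, so the wqo and increasing-union hypotheses are each used exactly once, in the two places indicated.
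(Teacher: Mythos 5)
Your proof is correct and follows essentially the same route as the paper: contradiction, wqo to extract an increasing subsequence of bad witnesses, closure under increasing union to land back in $\C$, and the intermediate-set lemma to push the determining set of the union down to a $C_{n_k}$ with $n_k$ large enough. The extra bookkeeping (the equality $C \cap \ball_m = C_{n_k} \cap \ball_m$) is harmless but not needed — finiteness of $B$ and $B \subset C_{n_k} \cap \ball_{n_k}$ already suffice.
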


\begin{proof}
It suffices to show that a single avoradius works for all $C \in \C$. Suppose not, and for each $i \in \N$, let $C_i \in \C$ be any set that does not admit radius $i$. By the wqo assumption, we may restrict to a subsequence $C_{n_i}$ which are increasing as sets, and do not admit radius $n_i$. Then $\bigcup_i C_{n_i} = C \in \C$ by closure under increasing union, and thus $C$ admits a determining set $B \Subset C$ by the $\C$-avo assumption. For all large enough $i$, we have $B \Subset C_{n_i} \subset C$. Thus, $B$ is also a determining set for the intermediate set $C_{n_i}$. Taking $i$ also large enough so that $B \subset \ball_{n_i} \cap C_{n_i}$, we conclude that $n_i$ is an avoradius for $C_{n_i}$, a contradiction.
\end{proof}

If $S$ is a set where an order is understood from context, then ${\downarrow s} = \{t \in S \colo t < s\}$. Note that typically with this notation one has $s \in {\downarrow s}$, but not here.

\begin{definition}
\label{def:Constructible}
Let $\C$ be a family of subsets of $G \setminus \{e_G\}$. A \emph{$\C$-well-ordering} $<$ of $D \subset G$ is a well-ordering of $D$ such that for each $s \in S$, the set $s^{-1} {\downarrow s} = \{s^{-1}t \colo t < s\}$ is in $\C$. If $D$ has a $\C$-well-ordering then we say $D$ is \emph{$\C$-constructible}. A family $\C$ is \emph{constructible} if every $C \in \C$ admits 
a $\C$-well-ordering, and the group $G$ also admits one. 
\end{definition}

\begin{lemma}
\label{lem:UniformSFT}
Let $X \subset A^G$ be a subshift that is uniformly $\C$-avo for $\C \subset \pow(G \setminus \{e_G\})$ with avoradius $r$. Then whenever $D$ is $\C$-constructible, $X|D$ is SFT with window size $r$. 
\end{lemma}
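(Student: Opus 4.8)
The plan is to show that $X|D$ is defined by the finite set of forbidden patterns $\pat = \{P \in \Sigma^{\ball_r} : P \notin \lang(X)\}$, i.e.\ the patterns on the $r$-ball that do not appear in $X$. Since $D$ is $\C$-constructible, fix a $\C$-well-ordering $<$ of $D$. The ``only if'' direction is immediate: if $x \in X|D$, then $x$ is globally valid and hence cannot contain any pattern that fails to appear in $X$, so no translate of a pattern in $\pat$ occurs in $x$.

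For the converse, I would take $x \in \Sigma^D$ that is locally $\pat$-legal (contains no translate of a pattern from $\pat$ whose domain fits in $D$) and show $x \in X|D$ by transfinite induction along the well-ordering $<$. The induction hypothesis at stage $s$ is that the prefix $x|{\downarrow s}$ is globally valid, i.e.\ $x|{\downarrow s} \in X|{\downarrow s}$; by compactness this passes through limit stages of the well-ordering, so the main work is the successor-type step of extending a globally valid $x|{\downarrow s}$ to include the symbol at $s$. The key structural input is that, by the definition of $\C$-well-ordering, the set $C := s^{-1}{\downarrow s}$ lies in $\C$. Translating the configuration $x|{\downarrow s}$ by $s^{-1}$ puts us exactly in the situation governed by the uniform $\C$-avo property at the origin, with avoradius $r$.

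Concretely, I would argue as follows. The configuration $s^{-1}(x|{\downarrow s})$ is a globally valid configuration on $C \in \C$. Its restriction to the determining set $C \cap \ball_r$ is $s^{-1}(x|{\downarrow s})|(C \cap \ball_r)$, and the uniform avo property says that the legal extensions to $e_G$ are determined by this restriction. Now the symbol $x_s$ is a legal extension precisely when the pattern $s^{-1}(x|{\downarrow s})|(C \cap \ball_r) \sqcup (x_s)^{e}$, whose domain sits inside $\ball_r$, appears in $X$. But this pattern is a translate (by $s^{-1}$) of $x|(s\ball_r \cap {\downarrow s}) \sqcup (x_s)^s$, which is a subpattern of $x$ on a domain contained in $s\ball_r$; since $x$ is locally $\pat$-legal and $\pat$ consists of exactly the $\ball_r$-patterns absent from $X$, this subpattern must appear in $X$. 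Hence $x_s$ is a legal extension, which by the avo property (Lemma characterizing avo via follower sets) means $x|({\downarrow s} \cup \{s\})$ is globally valid, completing the inductive step.

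The main obstacle I anticipate is bookkeeping with the limit stages of the well-ordering rather than any deep difficulty: I must check that global validity of all proper prefixes $x|{\downarrow s}$ yields global validity of $x|({\downarrow s} \cup \{s\})$ at limit points via a compactness argument, and that the transfinite induction genuinely terminates with $x|D \in X|D$. A secondary subtlety is to confirm that the domain of the relevant witnessing pattern really fits inside $\ball_r$ after translation so that it is one of the patterns whose occurrence the local legality of $x$ controls; this is where the \emph{uniformity} of the avoradius $r$ across all $C \in \C$ is essential, since the sets $C = s^{-1}{\downarrow s}$ vary wildly with $s$ but a single $r$ must bound the determining window throughout.
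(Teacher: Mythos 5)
Your proof is correct and follows essentially the same route as the paper's: the same choice of forbidden patterns, transfinite induction along the $\C$-well-order, compactness at limit stages, and the uniform avo property with radius $r$ applied to the translated prefix $s^{-1}{\downarrow s} \in \C$ at successor stages. The only detail the paper adds is the degenerate case $X = \emptyset$, which it handles by allowing forbidden patterns whose domain is any subset of $\ball_r$ (in particular the empty pattern); your $\pat \subset \Sigma^{\ball_r}$ would miss this for domains $D$ too small to contain a translate of $\ball_r$.
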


\begin{proof}
If $X$ is empty, the claim is trivial as we may forbid the empty pattern to define its restriction to any set $D$.

Consider now any constructible set $D$. Let $\pat$ be the set of all patterns that do not appear in configurations of $X$, and whose domain is contained in the $r$-ball. Of course if $x \in X|D$, then $x$ does not contain any of these patterns. Suppose then that $x \in A^D$ does not contain any of these patterns. We will show that $x$ is the restriction of a point in $X$. 

Take a $\C$-well-ordering $<$ of $D$, and recall the notation ${\downarrow s} = \{t \in S \colo t < s\}$. Note that $D$ is isomorphic to an initial segment of the ordinals, and also isomorphic to $\{{\downarrow s} \colo s \in D\}$ (ordered by containment) under $s \mapsto {\downarrow s}$, with limit ordinals corresponding to increasing unions. We show by induction along $<$ that $x|{\downarrow s}$ is globally valid in $X$. This is obviously true for the minimal element $s_0$ of $D$, since $x|{\downarrow s_0}$ is the empty pattern, and $X$ is nonempty. For limit ordinals, this follows immediately from compactness of $X$ (an increasing union of globally valid patterns is globally valid).

For successor ordinals $\downarrow s$, say with predecessor $t \prec s$ for $s$, we are dealing with a $<$-prefix $C \sqcup \{t\} \subset D$ with maximal element $t$, such that $x|C$ is globally valid and $t^{-1}C \in \C$. By shift-invariance of $X$, also $y|t^{-1}C$ is globally valid where $y = t^{-1}x$. Since $r$ is a radius for $t^{-1}C$, the set $B = \ball_r \cap t^{-1}C$ is determining for $t^{-1}C$.

Since $y|t^{-1}C \in X|t^{-1}C$ (this is just another way to say that $y|t^{-1}D$ is globally valid), the definition of a determining set states that
\[ {y|B \sqcup a^e \sqsubset X} \iff {y|t^{-1}C \sqcup a^e \sqsubset X}. \]
We have indeed $y|B \sqcup a^e \sqsubset X$, since $B \cup \{e\}$ is contained in $\ball_r$, and $y|B \sqcup a^e = y|B \cup \{e\}$ is a pattern that appears in $X$ (since $y = t^{-1}x$ does not contain any pattern from $\pat$). Thus, $y|t^{-1}C \cup \{e\} = y|t^{-1}C \sqcup a^e \sqsubset X$.

Shifting back by $t$, we conclude that $x|C \cup \{t\} = x|C \sqcup a^t$ is globally valid, concluding the induction step.
\end{proof}


\section{Polycyclic groups and inductive intervals}

Let $G$ be polycyclic. Fix a sequence of subgroups $1 = H_0 < H_1 < \cdots < H_n = G$ each normal in the next, so that quotients $H_{i+1}/H_i$ are cyclic (the existence of such a sequence is the definition of a polycyclic group). Pick $h_i$ so that $h_i H_{i-1}$ generates $H_i/H_{i-1}$, equivalently $H_i = \langle h_i \rangle H_{i-1}$. Let $k_i = \mathrm{ord}(H_i/H_{i-1})$.

\begin{definition}
The $((h_i)_i, (H_i)_i, (k_i)_i)$ are called a \emph{polycycle structure} for $G$.
\end{definition}

Of course $H_i$ and $k_i$ are determined by the choices of $h_i$, but it is convenient to always have $h_i, H_i, k_i$ refer to this data. Each $H_i$ by default inherits a polycycle structure from $G$, by restricting to an initial subset of the $h_i$. We always consider a polycyclic group as carrying a fixed polycycle structure. We call $n$ the \emph{size} of the structure. The various $i$ are informally referred to as \emph{axes} or \emph{dimensions}, and a dimension is called \emph{finite} or \emph{infinite} depending on whether $k_i < \infty$.

Note that the size $n$ is \emph{not} equal to the standard \emph{Hirsch length} of the group, which only counts infinite dimensions. A polycyclic group is called \emph{strongly polycyclic} if all the $k_i$ are infinite. Every polycyclic group is virtually strongly polycyclic, and by a bit of recoding it should be possible to remove the finite dimensions from the following discussion completely, but the theory should be more easily applicable if they are allowed, and they do not add much length to the discussion. 

Each element of $G$ corresponds uniquely to a tuple $t = (t_1, \ldots, t_n) \in \Z^n$ where for all $i$ such that $k_i < \infty$ we have $t_i \in [0, k_i-1]$. The tuple is given inductively for $g \in G$: The last coordinate is a direct projection to $H_n/H_{n-1}$, then we turn it to $0$ by multiplying by a power of $h_i$ from the right, and extract the tuple for $H_{n-1}$ inductively (using its inherited polycycle structure) to get the first $n-1$ coordinates. Conversely we write the element $g$ corresponding to a tuple $t$ as $g(t)$. We are typically only interested in the value of the last nonzero element of the tuple, which is independent of the choice of the $h_i$ (it only depends on the $H_i$).

We say an interval $I \subset \Z$ (possibly infinite in one directions) \emph{grazes $0$} if it is of one of the forms $(-\infty,-1], [-n, -1], \emptyset, [1, n], [1,\infty)$. It is \emph{positive} if it is one of the last two, and \emph{negative} if it is one of the first two. This is the \emph{sign} of the interval. The empty set has no sign \emph{zero}, say. In the case of a finite cyclic group $\Z_k$, intervals are images of intervals on $\Z$ under projection. Then an interval graces $0$ if it is empty, or it contains one of $1$ or $-1$ (or both), but not $0$.

\begin{definition}
Let $G$ be a polycyclic group with a polycycle structure as above. Its \emph{inductive intervals}, or \emph{II}s, are defined inductively as sets $S \subset G$ of elements whose tuple $t$ satisfies that either
\begin{itemize}
\item $t_n$ comes from a particular interval gracing $0$ in $\Z_{k_n}$, or
\item $t_n = 0$ and $(t_1, \ldots, t_{n-1})$ comes from a certain inductive interval in $H_{n-1}$.
\end{itemize}
\end{definition}

Concretely, an II $I$ is thus determined by an $n$-tuple of intervals $(I_1, I_2, \ldots, I_n)$ where $I_i$ is a $0$-gracing interval in $\Z_{k_i}$ (of $k_i$ finite) or $\Z$ (if $k_i = \infty$). The $I_i$ are called \emph{axis intervals}. Then $g(t_1, \ldots, t_n) \in I$ if $t_i \neq 0$ for some $i$, and for the maximal such $i$ we have $t_i \in I_i$.

For the group $\Z^d$, we by default use the polycyclic structure where $h_i$ is the $i$th standard generator $(0, \ldots, 0, 1, 0, \ldots, 0)$ with $1$ in the $i$th position. Again, Figure~\ref{fig:minecraft} illustrates an inductive interval for the group $\Z^3$ with this choice. For example, since the last axis interval is $[-64, -1]$ and this axis points upward, the ground below the player is completely included in the set, all the way until $-64$; but on the first axis (pointing right), we only fill finitely many positions.



\subsection{Main properties of inductive intervals}

\begin{lemma}
Inductive intervals are a good family.
\end{lemma}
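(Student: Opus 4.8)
The plan is to unpack the definition of \emph{good} and verify the required implication directly from the definition of inductive intervals. Recall that a family $\C$ is good if for every $C \in \C$ there exists $g \in G$ such that $g(C \cup \{e_G\}) \in \C$. So I must start with an arbitrary inductive interval $C$, described by its tuple of axis intervals $(I_1, \ldots, I_n)$, and produce a single group element $g$ such that translating $C \cup \{e_G\}$ by $g$ lands back in the family of inductive intervals.

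First I would identify which coordinate ``controls'' membership. An element $g(t)$ lies in $C$ when, for the largest index $i$ with $t_i \neq 0$, we have $t_i \in I_i$; and $e_G$ itself (the all-zero tuple) is \emph{never} in $C$ since every axis interval grazes $0$ and hence excludes $0$. The natural guess for the shift is to translate along the \emph{lowest} axis whose interval is nonempty, or more precisely to pick $g$ so that it shifts the controlling coordinate by one step into the interval. Concretely, let $j$ be the smallest index with $I_j \neq \emptyset$ (such an index exists unless $C = \emptyset$, which I will handle separately: for $C = \emptyset$ one takes $g$ with $g(\{e_G\})$ being a single-point inductive interval, i.e.\ pick any axis $i$ and a singleton interval $\{1\} \subset I_i$ if $k_i$ permits, so $g = h_i^{-1}$ works). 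For the controlling axis $j$, the sign of $I_j$ tells me whether to translate by $h_j$ or $h_j^{-1}$: if $I_j$ is negative I translate by $g = h_j^{-1}$ (pushing $e_G$ to coordinate $-1$, which lies in every negative grazing interval), and symmetrically for positive $I_j$.

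The key verification is then that $g(C \cup \{e_G\})$ is again an inductive interval. The point $e_G$ moves to $g$, whose tuple has controlling coordinate $\pm 1 \in I_j$, hence $g \in C$ conceptually and certainly $g$ lands in the translated set described by an \emph{enlarged} axis interval. The remaining elements of $C$ translate so that the axis interval $I_j$ is replaced by $I_j$ shifted by $\pm 1$; for this to stay a grazing interval I must check the shift does not create a ``$0$'' entry improperly and that the one-sided infinite case and the finite cyclic case both behave. For a one-sided interval like $I_j = (-\infty,-1]$, shifting by $-1$ gives $(-\infty,-2]$, and adjoining the image of $e_G$ (now at $-1$) recovers $(-\infty,-1]$ exactly, so the translated set is the inductive interval with the same axis data. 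For a bounded interval $[-m,-1]$, the image is $[-m-1,-2] \cup \{-1\} = [-m-1,-1]$, again grazing $0$. I would write these cases out compactly, noting that in every case $g\cdot(I_j\text{-interval})$ together with the relocated origin reassembles into a grazing interval on axis $j$ while leaving higher axis intervals untouched.

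The main obstacle I anticipate is bookkeeping the finite cyclic dimensions and the interaction between the translation and the inductive/nested structure: when I shift along axis $j$, I need to be sure that elements of $C$ with controlling axis \emph{higher} than $j$ are unaffected (their coordinate $t_i$ for $i > j$ is the controlling one and multiplication by $h_j$ on the right does not alter coordinates above $j$, which follows from the normal-series structure), and that elements whose coordinate $j$ was $0$ behave correctly under the shift. This is essentially a routine but slightly delicate check using that $g(t) \cdot h_j^{\pm 1}$ modifies the tuple only in coordinate $j$ and below; since I chose $j$ minimal with $I_j \neq \emptyset$, the coordinates below $j$ contribute nothing to membership, so the analysis reduces cleanly to axis $j$ alone. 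Once that reduction is in place, the verification that the shifted axis interval still grazes $0$ is immediate from the case analysis above.
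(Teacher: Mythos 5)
There is a genuine error in your choice of the translation axis. You translate along the smallest axis $j$ with $I_j \neq \emptyset$, but this fails whenever $j > 1$ and $H_{j-1}$ is nontrivial. Take $G = \Z^2$ and the inductive interval $C$ with axis intervals $(\emptyset, [1,5])$, i.e.\ $C = \Z \times [1,5]$. Your recipe gives $j = 2$ and $g = h_2 = (0,1)$, so $g(C \cup \{e_G\}) = \{(0,1)\} \cup (\Z \times [2,6])$. This is not an inductive interval: any inductive interval of $\Z^2$ containing a point with second coordinate $1$ must contain the entire row $\Z \times \{1\}$, because membership of an element whose controlling (maximal nonzero) coordinate is $t_2 = 1$ depends only on whether $1$ lies in the second axis interval, not on $t_1$. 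The same defect appears in your treatment of $C = \emptyset$, where you allow ``any axis $i$'': $h_i^{-1}\{e_G\}$ is a single point, but the inductive interval whose $i$-th axis interval is $\{-1\}$ (with the higher ones empty) is the whole coset $H_{i-1}h_i^{-1}$, which is a singleton only when $i = 1$. The underlying problem is that $e_G$ is a single point, and after translating along axis $j$ it becomes a single element of the coset $H_{j-1}h_j^{\pm 1}$, whereas an inductive interval contains each such coset either entirely or not at all at the controlling level $j$.

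The fix --- and this is what the paper does --- is to always translate along the \emph{first} axis, whether or not $I_1$ is empty. Left multiplication by $h_1^{\pm 1}$ adds $\pm 1$ to the first coordinate and leaves all higher coordinates unchanged, and since $H_0$ is trivial the image of $e_G$ is the genuine singleton coset $\{h_1^{\pm 1}\}$; hence $h_1^{\pm 1}(C \cup \{e_G\})$ is the inductive interval with axis intervals $((I_1 \pm 1) \cup \{\pm 1\}, I_2, \ldots, I_n)$, the sign chosen to match the sign of $I_1$ (either sign works if $I_1 = \emptyset$). Your case analysis of how a grazing interval transforms under the shift is fine and transfers verbatim to this corrected choice. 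A secondary point: you argue via \emph{right} multiplication, but goodness concerns the left translate $g(C \cup \{e_G\})$, and on a nonabelian polycyclic group it is left multiplication by $h_1$ that cleanly preserves the higher coordinates (each quotient $H_i/H_{i-1}$ absorbs $h_1 \in H_{i-1}$ on the left); right multiplication can perturb intermediate coordinates, since $H_1$ is only subnormal, not normal, in $G$.
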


\begin{proof}
If an inductive interval $C$ is given by axis intervals $(I_1, I_2, \ldots, I_n)$ where $I_1 = [1, m]$ or $I_1 = [1, \infty)$ then $C \cup \{e_G\}$ is given by $(J, I_2, \ldots, I_n)$ where $J = [0, m]$ or $J = [0, \infty)$, and then $g_1C$ is an inductive interval, where $g_1$ is the generator for the first axis. The same is true if $I = \empty$. If  $I_1 = [-m, -1]$ or $I_1 = (-\infty, -1]$, then $g_1^{-1}C$ is an inductive interval.
\end{proof}

We show that the family of inductive intervals has good order-theoretic properties, and that polycyclic groups can be ordered so that all prefixes of the order are also inductive intervals up to translation. These are the properties that later allow us to conclude that II-avo subshifts are SFTs, as well as computable properties.

\begin{lemma}
\label{lem:wqo}
Inductive intervals form a well-quasi-order under inclusion.
\end{lemma}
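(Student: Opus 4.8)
The plan is to prove that inductive intervals form a well-quasi-order (wqo) under inclusion by induction on the size $n$ of the polycycle structure. Recall that a wqo can be characterized as a quasi-order with no infinite strictly descending chains and no infinite antichains; equivalently, every infinite sequence has a nondecreasing subsequence. I will use the latter formulation, since it interacts most cleanly with the inductive structure of these sets.

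\medskip

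The key observation is that an inductive interval $C$ in $G = H_n$ is completely encoded by its tuple of axis intervals $(I_1, \ldots, I_n)$, where each $I_i$ is a $0$-gracing interval in $\Z_{k_i}$ (finite case) or $\Z$ (infinite case). Moreover, inclusion of inductive intervals corresponds to a coordinatewise condition on this tuple: from the concrete description, $g(t) \in C$ iff, for the largest index $i$ with $t_i \neq 0$, we have $t_i \in I_i$. The inclusion $C \subset C'$ should translate into the componentwise inclusions $I_i \subset I'_i$ for all $i$ (one should check this carefully, paying attention to how the "largest nonzero coordinate" rule makes the different axes interact, but since a point is assigned to the axis of its top nonzero coordinate, the membership condition decouples axis by axis). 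Thus the poset of inductive intervals is order-isomorphic to the product $\prod_{i=1}^n \mathcal{I}_i$, where $\mathcal{I}_i$ is the set of $0$-gracing intervals in the relevant cyclic group, ordered by inclusion.

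\medskip

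The proof then reduces to two facts. First, each single factor $\mathcal{I}_i$ is a wqo under inclusion: the set of $0$-gracing intervals is $\{(-\infty,-1], [-m,-1], \emptyset, [1,m], [1,\infty)\}$ (and their finite-cyclic images), and this is easily seen to be a wqo — the negative intervals $[-m,-1]$ form a chain ordered by $m$ (hence wqo, being well-ordered), likewise the positive ones, the two infinite intervals and $\emptyset$ are finitely many extra elements, and intervals of opposite sign are incomparable but there are only two sign classes so no infinite antichain arises. In the finite case $\Z_{k_i}$ there are only finitely many $0$-gracing intervals, so $\mathcal{I}_i$ is trivially wqo. Second, I invoke the standard fact (a form of Dickson's lemma / the finite-product closure of wqo's) that a finite product of wqo's, ordered componentwise, is again a wqo. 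Combining these with the order-isomorphism above yields that inductive intervals form a wqo.

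\medskip

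The main obstacle I anticipate is \emph{justifying the order-isomorphism} rather than the wqo bookkeeping, which is routine. Specifically, I must verify that set inclusion $C \subset C'$ holds if and only if $I_i \subset I'_i$ for every axis $i$. The "if" direction is immediate from the membership rule; the "only if" direction requires showing that each axis interval can be "witnessed" independently — i.e., that for each $i$ there exist elements whose top nonzero coordinate is exactly $i$ and which range over all of $I_i$, so that $C \subset C'$ forces $I_i \subset I'_i$. This uses that the coordinate $t_i$ is free to take any value in the relevant cyclic group while the higher coordinates are held at $0$, which follows from the tuple parametrization of $G$. Once this decoupling is confirmed, the remainder is a clean application of Dickson's lemma to a finite product of one-dimensional interval wqo's, and the induction on $n$ is essentially packaged inside the product structure.
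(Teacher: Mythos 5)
Your proof is correct and follows essentially the same route as the paper: both reduce the question to the tuple of axis intervals and compare componentwise, the paper simply carrying out the Dickson-style subsequence extraction by hand (fix the signs, fix the coordinates taking finitely many values, observe the rest become increasing prefixes) where you invoke the general fact that a finite product of wqo's is a wqo. Your worry about the order-isomorphism is well-placed but resolves as you expect; note that for the wqo conclusion only the easy direction (componentwise inclusion of axis intervals implies inclusion of the sets) is actually needed.
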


\begin{proof}
Consider a sequence of inductive intervals $(I_1^i, \ldots, I_n^i)_i$. If some inductive interval appears infinitely many times, we are done. If some $I_j^i$ takes on only finitely many intervals, we may restrict to the infinite subsequence where this interval is fixed. Next, we may restrict to an infinite subsequence such that each $I_i$ is of the same sign, i.e.\ grazes $0$ from the same side. By symmetry, we may assume that $I_i$ is empty or $1 \in I_i$ for all $i$. We may assume that, for all $j$, if $I_j^i$ does not stay fixed, it becomes a longer and longer prefix of $[1, \infty)$. The sequence of IIs we end up with is then increasing under set containment.

For the latter claim, observe that $s$-signed inductive intervals are a subfamily of the inductive intervals.
\end{proof}

\begin{lemma}
\label{lem:Union}
Inductive intervals are closed under increasing union. 
\end{lemma}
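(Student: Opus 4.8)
# Proof Plan for Lemma (Inductive intervals are closed under increasing union)

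The plan is to work directly with the tuple-of-axis-intervals description of inductive intervals and verify that an increasing union, axis by axis, again produces a valid tuple of $0$-grazing intervals. Suppose we have inductive intervals $C^1 \subset C^2 \subset \cdots$ with $C = \bigcup_i C^i$, where each $C^i$ is given by axis intervals $(I_1^i, I_2^i, \ldots, I_n^i)$. First I would make precise how the containment $C^i \subset C^{i+1}$ constrains the axis intervals. The key observation is that membership $g(t_1, \ldots, t_n) \in C^i$ depends only on the axis interval $I_j^i$ at the \emph{largest} index $j$ with $t_j \neq 0$. So to read off the relationship between the $I_j^i$, I would isolate one axis at a time: fixing $t_j \neq 0$ and setting all higher coordinates to zero, membership is governed purely by $I_j^i$, and the containment forces $I_j^i \subset I_j^{i+1}$ \emph{as subsets of} $\Z_{k_j}$ (or $\Z$), provided the higher axis intervals agree in a suitable sense.

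The main subtlety, which I expect to be the central obstacle, is that the containment of the full sets does \emph{not} immediately give a coordinatewise containment of the axis intervals, because a change in a higher axis interval can mask information about lower ones. For instance, if $I_n^{i+1}$ strictly grows while $I_{n-1}$ shrinks, the union at the top axis could still be increasing overall. To handle this cleanly I would argue by downward induction on the axis index, or equivalently reduce to the behavior on the "slice" where the top coordinate is zero. Concretely: the set of elements of $C^i$ with $t_n \neq 0$ is determined by $I_n^i$ alone (all such tuples with $t_n \in I_n^i$ are included regardless of lower coordinates), so the increasing union condition forces $I_n^i \nearrow$ to some $0$-grazing limit interval $I_n$ in $\Z_{k_n}$. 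Then the elements with $t_n = 0$ form a copy of $H_{n-1}$, and restricting each $C^i$ to this slice gives inductive intervals in $H_{n-1}$ forming an increasing union; here I would invoke the inductive hypothesis on $H_{n-1}$ with its inherited polycycle structure.

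I would then verify the two remaining points. First, that an increasing union of $0$-grazing intervals in $\Z$ (or $\Z_{k_j}$) is again $0$-grazing: since each $I_j^i$ avoids $0$ and grazes from a fixed side, and containment preserves the sign, the union is one of $\emptyset$, $[1,m]$, $[1,\infty)$, $[-m,-1]$, $(-\infty,-1]$ — the only new phenomenon is a finite positive (or negative) interval growing to the corresponding infinite ray, which is still $0$-grazing. I should note that the sign might only stabilize after passing to the tail of the sequence, but since the union depends only on the tail this causes no difficulty; if some axis were empty for infinitely many indices it stays empty in the relevant contribution. Second, I would check that the tuple of limit axis intervals $(I_1, \ldots, I_n)$ actually defines the union $C$: an element $g(t)$ lies in $C$ iff it lies in some $C^i$ iff its top nonzero coordinate $t_j$ satisfies $t_j \in I_j^i$ for some $i$ iff $t_j \in I_j = \bigcup_i I_j^i$, which is exactly the membership condition for the inductive interval with axis intervals $(I_1, \ldots, I_n)$. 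This matches the increasing-union description at limit ordinals used later in the construction of $\C$-well-orderings.
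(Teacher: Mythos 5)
Your proposal is correct and follows essentially the same route as the paper, which simply observes that set containment of inductive intervals forces each axis interval to increase and that the union is then given by the coordinatewise unions $(\bigcup_i I_1^i, \ldots, \bigcup_i I_n^i)$. The ``masking'' subtlety you flag does not actually arise --- each $I_j^i$ is read off directly from the elements $g(0,\ldots,0,t_j,0,\ldots,0)$ with $t_j \neq 0$, so containment of the sets immediately gives $I_j^i \subset I_j^{i+1}$ for every $j$ --- but your slice-and-induct resolution is a valid (if more elaborate) way of making the same point.
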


\begin{proof}
Let $(I_1^i, \ldots, I_n^i)_i$ be an increasing sequence of inductive intervals. This means just that each sequence of intervals $I_j^i$ is increasing in $i$. The union is easily verified to be $(\bigcup_i I_1^i, \ldots, \bigcup_i I_n^i)$. Again the same is true automatically for $s$-signed inductive intervals.
\end{proof}

In particular it follows from the above lemmas that the ($s$-signed) inductive intervals are also topologically closed in $\pow(G)$.

\begin{lemma}
\label{lem:Closed}
The inductive intervals are topologically closed in $\pow(G)$.
\end{lemma}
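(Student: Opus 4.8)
The plan is to exploit the two order-theoretic properties already established, namely that inductive intervals are well-quasi-ordered under inclusion (Lemma~\ref{lem:wqo}) and closed under increasing union (Lemma~\ref{lem:Union}). In fact these two facts alone force topological closedness, and this is the only structure of inductive intervals I would use.

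First I would recall that, under the identification of a subset with its indicator function in $\{0,1\}^G$, the topology on $\pow(G)$ is the product topology. Since $G$ is polycyclic, hence finitely generated and countable, this space is metrizable, so it suffices to prove sequential closedness: if $(S^j)_j$ is a sequence of inductive intervals converging to some $S \in \pow(G)$, then $S$ is an inductive interval.

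Given such a convergent sequence, I would apply the wqo property of Lemma~\ref{lem:wqo} to extract a nondecreasing subsequence $S^{j_1} \subset S^{j_2} \subset \cdots$. A nondecreasing sequence of sets converges pointwise to its union: for $g \in \bigcup_k S^{j_k}$ the indicator is eventually $1$, and for $g \notin \bigcup_k S^{j_k}$ it is constantly $0$. Hence this subsequence converges to $\bigcup_k S^{j_k}$. On the other hand it is a subsequence of a sequence converging to $S$, so it also converges to $S$; since limits in the product topology are unique, $S = \bigcup_k S^{j_k}$. By Lemma~\ref{lem:Union} this union is an inductive interval, so $S$ is an inductive interval, as desired.

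There is no real obstacle here beyond bookkeeping: the substantive content has already been isolated into Lemmas~\ref{lem:wqo} and~\ref{lem:Union}. The one point to state carefully is the identification of the pointwise limit of a nondecreasing sequence with its union, together with uniqueness of limits (equivalently metrizability of $\pow(G)$), which is what lets me conclude $S = \bigcup_k S^{j_k}$. The same argument applies verbatim to the $s$-signed inductive intervals, since Lemmas~\ref{lem:wqo} and~\ref{lem:Union} were proved for them as well.
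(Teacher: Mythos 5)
Your proof is correct and follows essentially the same route as the paper's: extract an increasing subsequence via the wqo property, identify its limit with its union, invoke closure under increasing unions, and conclude by uniqueness of limits. You simply spell out the bookkeeping (metrizability of $\pow(G)$, pointwise convergence of a nondecreasing sequence to its union) that the paper leaves implicit.
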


\begin{proof}
If we have a converging sequence of inductive intervals, the previous lemmas show we can find an increasing subsequence that converges to an inductive interval. Thus the sequence itself converges to an inductive interval.
\end{proof}

\begin{lemma}
\label{lem:IIWellOrder}
The family $\C$ of inductive intervals is constructible. 
\end{lemma}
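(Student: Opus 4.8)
The plan is to prove the stronger statement by induction on the size $n$ of the polycycle structure of $G$: both $G$ itself and every inductive interval of $G$ admit a $\C$-well-ordering. The base case $n = 0$ (the trivial group) is immediate, since the only inductive interval is $\emptyset$, and $G = \{e_G\}$ is well-ordered by its single element with $\downarrow e_G = \emptyset \in \C$. The inductive hypothesis must carry both clauses at once, because the interval clause at size $n$ will consume the group clause at size $n-1$.

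For the inductive step I would first fix a convenient well-ordering of the last coordinate. On $\Z$ I take the ``zig-zag'' order $0 \prec -1 \prec 1 \prec -2 \prec 2 \prec \cdots$, whose key feature is that for every $j$ the translated initial segment $\{m - j \colo m \prec j\}$ is a finite interval gracing $0$; on $\Z_{k_n}$ I take the analogous (and routine, since finite) order. Now take $G$ of size $n$, with normal subgroup $H_{n-1}$ and $G/H_{n-1} \cong \Z_{k_n}$, and order $G$ by making the last coordinate most significant: $g \prec g'$ iff the last coordinates satisfy $t_n \prec t'_n$, and within a coset $h_n^j H_{n-1}$ use the $\C$-well-ordering of $H_{n-1}$ supplied by induction. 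To check this is a $\C$-well-ordering, write $s = h_n^j u$ with $u \in H_{n-1}$ and split $\downarrow s$ into the full cosets $h_n^{j'} H_{n-1}$ with $j' \prec j$, together with the part of coset $j$ preceding $s$. Using normality of $H_{n-1}$, left multiplication by $s^{-1}$ sends $h_n^{j'}H_{n-1}$ to the full coset $h_n^{j'-j}H_{n-1}$ and sends the initial segment of coset $j$ to $u^{-1}{\downarrow u}$ inside $H_{n-1}$. Hence $s^{-1}{\downarrow s}$ consists of all full cosets whose last coordinate lies in the gracing-$0$ set $\{j' - j \colo j' \prec j\}$, together with the $H_{n-1}$-inductive interval $u^{-1}{\downarrow u}$ sitting at last coordinate $0$; these glue into the single inductive interval whose last axis interval is $\{j'-j \colo j' \prec j\}$ and whose lower axis intervals are those of $u^{-1}{\downarrow u}$.

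The inductive-interval clause is where the one genuine subtlety lies. Let $C$ have axis intervals $(I_1, \ldots, I_n)$. If $I_n = \emptyset$ then $C \subset H_{n-1}$ and I use the inductive well-ordering of $C$ as an inductive interval of $H_{n-1}$. The naive idea for nonempty $I_n$, namely to order the $t_n = 0$ part first and the cosets afterward, \emph{fails}: the $t_n = 0$ part is a \emph{proper} inductive interval of $H_{n-1}$, so after translating by $s^{-1}$ for $s$ in a later coset it becomes a partial coset sitting at a nonzero last coordinate, which is never an inductive interval. The fix is to reverse the order: for positive $I_n$ I process the full cosets $h_n^j H_{n-1}$ with $j \in I_n$ in increasing order of $j$ (each internally ordered by the $H_{n-1}$ group well-ordering), and only then process the $t_n = 0$ part, the inductive interval $(I_1, \ldots, I_{n-1})$ of $H_{n-1}$, \emph{last}; for negative $I_n$ the cosets are taken in decreasing order of $j$, and the finite-cyclic case is analogous. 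The verification is the same bookkeeping as in the group case: for $s$ in coset $j$ the earlier full cosets translate to full cosets at the gracing-$0$ set $\{j'-j \colo j' \in I_n,\ j' < j\}$ (an interval such as $[-(j-1),-1]$) while the preceding part of coset $j$ gives $u^{-1}{\downarrow u} \in \C$ at last coordinate $0$; and for $s$ in the $t_n = 0$ part, all of $I_n$ has already appeared as full cosets, translating back to the full gracing-$0$ interval $I_n$, with the recursively well-ordered $H_{n-1}$-inductive interval at last coordinate $0$. In every case $s^{-1}{\downarrow s}$ is again an inductive interval, which completes the induction.

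I expect the only non-obvious point to be the decision to place the $t_n = 0$ part \emph{last} rather than first; everything else reduces to checking that translated initial segments of the chosen one-dimensional orders graze $0$, and that a configuration of ``full cosets at nonzero heights plus one partial coset at height $0$'' is exactly the shape of an inductive interval.
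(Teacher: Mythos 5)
Your proof is correct and takes essentially the same route as the paper's: induction on the size of the polycycle structure, ordering the full cosets $h_n^{j}H_{n-1}$ first (sign-appropriately, each carrying the inductively obtained group well-order) and appending the $t_n=0$ part, well-ordered as an inductive interval of $H_{n-1}$, at the very end --- which is exactly the paper's construction, including the key decision to place the height-$0$ part last. The only cosmetic difference is your zig-zag order on the last coordinate in the whole-group clause, where the paper lists the nonnegative powers of $h_n$ before the negative ones; both choices make every translated initial segment graze $0$.
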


\begin{proof}
We start with the claim that inductive intervals are constructible. We now prove the statement of the lemma by induction on the dimension $n$. For $n = 1$, the claim is easy, simply enumerate prefixes of the unique axis interval $I_1$ in the case of an inductive interval, and for the entire group $\Z$, enumerate first the nonnegative numbers, and then the negative numbers (for example).

For general $n$, first consider an inductive interval $(I_1, \ldots, I_n)$. Up to symmetry we may assume that $1 \in I_i$ for all nonempty $I_i$, i.e.\ the intervals are not negative. Let $\ell_n \leq k_n$ be the maximal element of the last axis interval $I_n$ (or $\ell_n = \infty$ if $k_n = \infty$ and the interval is $[1, \infty)$). Pick a $\C$-well-ordering of the group $H_{n-1}$ by induction. Shift this well-ordering to $H_{n-1} \times \{h_n^j\}$ for $j < \ell_n$ by picking any base point. Then order $H_{n-1} \times \{h_n^j \colo 0 < j < \ell_n\}$ by first comparing the power of $h_n$, and then $H_{n-1}$ in case of equality. This is a $\C$-well-ordering, since for any individual element the translated downset $S = s^{-1} {\downarrow s}$ is of the form $S = (H_{n-1} \times \{h_n^{-1}, \ldots, h_n^{-j}\}) \cup T$ where $T \subset H_{n-1}$ is a prefix of the order. Since $T$ is an inductive interval on $H_{n-1}$, $S$ is one in $G$. 

At this point we have $\C$-well-ordered the subset with axis intervals $(\emptyset, \ldots, \emptyset, I_n)$. Next, we well-order $(I_1, \ldots, I_{n-1})$ by induction, as an inductive interval on $H_n$). We add this at the end of the well-order described in the previous paragraph. The order remains a $\C$-well-order, since the new translated downsets are those of $(I_1, \ldots, I_{n-1})$, with all of $I_n$ included on the last axis.

In the case of the entire group $G$, mix the previous idea and the case of $\Z$: first list the positive cosets, and then the negative ones.
\end{proof}

\begin{definition}
\label{def:IIExtendable}
If $D \subset G$ is such that $G \setminus D$ admits a well-order such that $s^{-1} ({\downarrow s} \cup D)$ is in $\C$ for all $s \notin s$, then we say $D$ is \emph{$\C$-extendable}. We say a family of shapes $\C$ is \emph{extendable} if every $C \in \C$ is $\C$-extendable. 
\end{definition}

\begin{lemma}
\label{lem:IIWellOrder2}
The inductive intervals are extendable.
\end{lemma}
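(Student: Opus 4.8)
The plan is to prove extendability by induction on the size $n$ of the polycycle structure, closely paralleling the constructibility argument of Lemma~\ref{lem:IIWellOrder} but now building the well-order \emph{outward} from a fixed inductive interval $C$ rather than from the empty set. The guiding observation is that $C$ is $\C$-extendable exactly when $C$ can be realized as an initial segment of a transfinite enumeration of $G$ whose every translated downset is an inductive interval; so I only need to describe how to continue such an enumeration once all of $C$ has been placed, and check at each step that the set placed so far (translated to bring the newly added element to $e_G$) is an inductive interval. As usual I may assume by the reflection symmetry (swapping $h_i \leftrightarrow h_i^{-1}$) that every nonempty axis interval $I_i$ of $C$ is positive, and I will write $H_{n-1} \triangleleft G$, $\pi : G \to G/H_{n-1}$ for the quotient, and decompose $G$ into the slices $\pi^{-1}(\bar h_n^{\,c})$ indexed by the last coordinate $c$. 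Relative to this decomposition $C$ consists of the full slices $c \in I_n$ together with the lower-dimensional inductive interval $C_0 = (I_1,\dots,I_{n-1}) \subset H_{n-1}$ sitting in the slice $c=0$.

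The construction of the order on $G \setminus C$ then proceeds in two stages. In Stage~0 I enumerate the slice-$0$ complement $H_{n-1} \setminus C_0$ using the $(n-1)$-dimensional extendability of $C_0$ (the induction hypothesis); since translation by an element of $H_{n-1}$ fixes every slice (as $\pi(s^{-1}g)=\pi(g)$ for $s \in H_{n-1}$), each translated downset is full on the slices $I_n$, carries an $(n-1)$-dimensional inductive interval on slice $0$ by the defining property of the extendability order, and is empty elsewhere, hence is an inductive interval with last axis interval $I_n$. After Stage~0 the placed set is $C \cup H_{n-1}$, i.e.\ the slices $I_n \cup \{0\}$ are now full. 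In Stage~1 I fill the remaining ``outer'' slices ($c \neq 0$, $c \notin I_n$) one slice at a time, ordering the slice values $c$ by the one-dimensional recipe (the positive overshoot $c = m+1, m+2, \dots$ first, then the negatives $c=-1,-2,\dots$, with the obvious finite truncation when $k_n < \infty$), and filling each individual slice by transporting a fixed $\C$-well-order of $H_{n-1}$ (which exists by constructibility, Lemma~\ref{lem:IIWellOrder}) onto the coset. The point of this choice is that when the next element $s$ in slice $c$ is added, translation by $s^{-1}$ carries slice $c'$ to slice $c'-c$: the already-full slices $F$ map to $F - c$, which the slice-value ordering guarantees is a $0$-gracing interval, while the partially filled slice $c$ maps onto $u^{-1}{\downarrow u}$ in slice $0$, an $(n-1)$-dimensional inductive interval by the transported constructibility order. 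Thus the translated downset is again an inductive interval.

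The one remaining ingredient is the purely one-dimensional verification that the slice-value order in Stage~1 keeps $F - c$ a $0$-gracing interval at every step; this is the same computation as the $n=1$ base case and amounts to checking that, starting from the block $I_n \cup \{0\} = [0,m]$ (or $[0,\infty)$, or $\{0\}$ when $I_n = \emptyset$) and appending $m+1, m+2, \dots$ then $-1, -2, \dots$, one obtains translated blocks $[-c,-1]$ and then $[1,\infty)$, all of which graze $0$ (in the finite-cyclic case the final step produces $\Z_{k_n}\setminus\{0\}$, which is still a $0$-gracing interval). I expect the main obstacle to be bookkeeping rather than conceptual: one must be careful that $G$ is possibly non-abelian, so the argument should be phrased through $\pi$ and the normality of $H_{n-1}$ to justify that full slices translate to full slices and that transporting the $H_{n-1}$-order to a coset genuinely realizes the downsets $u^{-1}{\downarrow u}$; and the finite-cyclic axes ($k_n < \infty$) must be handled in parallel with the infinite ones. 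Once these routine checks are in place, induction on $n$ (with the trivial group as base case, where the only inductive interval is $\emptyset$) completes the proof.
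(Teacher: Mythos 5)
Your construction is correct and is essentially the paper's own argument: both order the complement of $C$ axis by axis, first completing the lower-dimensional part and then adjoining whole new cosets of the relevant subgroup (each ordered by a constructibility order as in Lemma~\ref{lem:IIWellOrder}), checking at each step that the translated downset together with $C$ is an inductive interval with $0$-gracing axis intervals. The only differences are organizational --- you phrase the construction as a top-down recursion on $H_{n-1}$ and enumerate the new slice indices one side at a time rather than alternating, whereas the paper describes the same well-order iteratively starting from the first axis --- and your version supplies the routine verifications (normality of $H_{n-1}$ giving ``full slices translate to full slices'', the finite-cyclic axes) that the paper leaves as ``easy to see''.
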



\begin{proof}
Consider an inductive interval $D$ with axis intervals $(I_1, \ldots, I_n)$. Begin ordering the complement by adding elements on the sides of $I_1$ in, say, alternating, order. It is easy to see that up to a shift, the prefixes of the order, together with $D$, are of the form $(J, I_2, \ldots, I_n)$ for larger and larger intervals $J$ with alternating signs, and after adding all elements of the first axis this way, we have ordered a translate of $(\emptyset, I_2', \ldots, I_n)$ where now $I_2'$ is $I_2$ with one new element. We can now order a new coset of the first axis (on either side of $I_2'$). We can continue similarly up the dimensions to order $D$ (with order type at most $\omega^n$).
\end{proof}



\section{SFTness and the avo property in concrete examples}

\subsection{Polycyclic groups}

\begin{theorem}
\label{thm:PolycyclicUniformSFT}
Let $G$ be a polycyclic group, and let $X \subset A^G$ be a subshift that is avo for the class $\C$ of all inductive intervals. Then $X$ is uniformly SFT on $\{G\} \cup \C$. 
\end{theorem}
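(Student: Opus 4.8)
The plan is to assemble the three structural facts about inductive intervals established above and feed them into Lemmas~\ref{lem:CommonRadius} and~\ref{lem:UniformSFT}, the only real work being to verify that the resulting SFT descriptions are genuinely \emph{uniform}, i.e.\ arise from a single finite set of forbidden patterns rather than a separate set for each shape.

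First I would pass from the $\C$-avo hypothesis to uniform $\C$-avo. By Lemma~\ref{lem:wqo} the inductive intervals form a well-quasi-order under inclusion, and by Lemma~\ref{lem:Union} they are closed under increasing union. These are precisely the hypotheses of Lemma~\ref{lem:CommonRadius}, so that lemma applies and yields a common avoradius $r$ valid for every $C \in \C$; in other words, $X$ is uniformly $\C$-avo with avoradius $r$.

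Next I would invoke constructibility. By Lemma~\ref{lem:IIWellOrder} the family $\C$ is constructible, which by Definition~\ref{def:Constructible} means both that every $C \in \C$ admits a $\C$-well-ordering and that the ambient group $G$ does as well. Hence every set in $\{G\} \cup \C$ is $\C$-constructible. Applying Lemma~\ref{lem:UniformSFT} with the avoradius $r$ from the previous step, each such set $D$ satisfies that $X|D$ is SFT with window size $r$; in particular this covers both all the inductive intervals and $X|G = X$ itself.

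The one point that requires care --- and the step I expect to be the crux --- is upgrading this collection of individual SFT descriptions to a single uniform one, since the conclusion asks for uniform SFTness rather than SFTness of each restriction in isolation. Here I would inspect the proof of Lemma~\ref{lem:UniformSFT}: the forbidden patterns it uses are exactly the patterns with domain contained in $\ball_r$ that do not appear in $X$. This finite set $\pat$ depends only on $r$ and on $X$, and \emph{not} on the particular constructible set $D$. Since the same $r$ works across all of $\{G\} \cup \C$ by uniform avo, the same $\pat$ simultaneously defines $X|D$ for every $D \in \{G\} \cup \C$. By the definition of uniform SFTness this exhibits $X$ as uniformly SFT on $\{G\} \cup \C$, with window $\ball_r$, completing the argument.
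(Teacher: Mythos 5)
Your proposal is correct and follows essentially the same route as the paper's proof: wqo plus closure under increasing unions gives uniform avo via Lemma~\ref{lem:CommonRadius}, constructibility via Lemma~\ref{lem:IIWellOrder} feeds into Lemma~\ref{lem:UniformSFT}, and uniformity follows because the forbidden pattern set there depends only on the common avoradius $r$ and on $X$. The paper leaves that last uniformity observation implicit, so your making it explicit is a small improvement in exposition rather than a divergence.
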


\begin{proof}
Suppose $G$ is a polycyclic and $X \subset A^G$ is avo for inductive intervals $\C$. By Lemma~\ref{lem:wqo}, $\C$ is a wqo under inclusion, and by Lemma~\ref{lem:Union} it is closed under increasing union. 
Thus by Lemma~\ref{lem:CommonRadius}, $X$ is uniformly $\C$-avo. By Lemma~\ref{lem:IIWellOrder}, $\C$ is constructible, meaning any $D \in \C \cup \{G\}$ admits a $\C$-well-order. Then Lemma~\ref{lem:UniformSFT} implies that $X|D$ is a subshift of finite type with window size $r$ for any of these sets. In other words, $X$ is uniformly SFT on $\{G\} \cup \C$.
\end{proof}

\begin{corollary}
Let $G$ be a polycyclic group, and let $X \subset A^G$ be a subshift that is avo for inductive intervals. Then $X$ is SFT.
\end{corollary}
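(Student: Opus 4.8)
The plan is to read this off directly from Theorem~\ref{thm:PolycyclicUniformSFT}, which has just been established. First I would observe that the hypotheses of the corollary are exactly those of that theorem: $G$ is polycyclic and $X \subset A^G$ is avo for the family $\C$ of all inductive intervals. The theorem then asserts that $X$ is uniformly SFT on the family $\{G\} \cup \C$. The single extra ingredient needed is the trivial remark that the full group $G$ is itself a member of $\{G\} \cup \C$. By the definition of being uniformly SFT on a family of sets, there is then one finite set of finite forbidden patterns $\pat$ that defines the restriction $X|C$ for every $C$ in this family, and in particular $\pat$ defines $X|G$.

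Since $X|G = X$, this says precisely that for $x \in A^G$ we have $x \in X$ if and only if $x$ contains no translate of a pattern from $\pat$; this is exactly the definition of a subshift of finite type, so $X$ is SFT. There is essentially no obstacle here, as all the work has already been done inside Theorem~\ref{thm:PolycyclicUniformSFT}: the chain of Lemmas~\ref{lem:wqo}, \ref{lem:Union}, \ref{lem:CommonRadius} upgrades the avo property to the \emph{uniform} avo property, Lemma~\ref{lem:IIWellOrder} supplies a $\C$-well-ordering of $G$ (so that $G$ is $\C$-constructible), and Lemma~\ref{lem:UniformSFT} converts uniform avo plus constructibility of the domain into the SFT conclusion with a fixed window size. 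The only genuinely new point in the corollary is the specialization $D = G$, together with the observation that ``SFT on the full group $G$'' coincides with the ordinary notion of an SFT.
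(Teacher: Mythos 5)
Your proof is correct and is exactly the paper's argument: the paper's entire proof of this corollary is the observation that $G \in \{G\} \cup \C$, so the conclusion of Theorem~\ref{thm:PolycyclicUniformSFT} specializes to $D = G$ and gives that $X = X|G$ is SFT. Your additional recap of the lemmas behind the theorem is accurate but not needed for the corollary itself.
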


\begin{proof}
$G \in \{G\} \cup \C$.
\end{proof}

For group shifts, the following is shown in \cite{Sc95}. We obtain a new proof.

\begin{corollary}
Let $G$ be a polycyclic group, and let $X \subset \Sigma^G$ be a quasigroup shift (i.e.\ $\Sigma$ is a quasigroup and $X$ is closed under cellwise quasigroup operations). Then $X$ is SFT.
\end{corollary}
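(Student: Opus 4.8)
The plan is straightforward: I want to reduce this corollary to the previous one, which states that any subshift avo for inductive intervals is SFT. So the entire task is to verify that a quasigroup shift $X \subset \Sigma^G$ on a polycyclic group $G$ is avo for the class $\C$ of inductive intervals. Since the previous corollary already supplies the implication ``avo for inductive intervals $\implies$ SFT,'' once I establish the avo property I am done.

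First I would invoke Lemma~\ref{lem:Quasigroup}, which says precisely that if $\Sigma$ is a finite quasigroup and $X \subset \Sigma^G$ is closed under the cellwise quasigroup operations, then $X$ has equal extension counts for \emph{every} $C \subset G \setminus \{e_G\}$, and is therefore an avoshift for the full family $\pow(G \setminus \{e_G\})$. The key observation is that inductive intervals are, by definition, particular subsets of $G \setminus \{e_G\}$ (each II is a set of elements with nonzero tuple, hence excludes $e_G$), so $\C \subset \pow(G \setminus \{e_G\})$. Consequently, being avo for \emph{all} of $\pow(G \setminus \{e_G\})$ immediately implies being avo for the subfamily $\C$ of inductive intervals.

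With the avo property for inductive intervals in hand, I would apply the preceding corollary (``$G$ polycyclic, $X$ avo for inductive intervals $\implies X$ SFT'') verbatim to conclude that $X$ is of finite type. The proof is thus essentially a one-line citation chain: Lemma~\ref{lem:Quasigroup} gives avo for $\pow(G \setminus \{e_G\}) \supset \C$, and the previous corollary converts this to SFTness.

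I do not anticipate any genuine obstacle here, since all the substantive work was done in establishing Lemma~\ref{lem:Quasigroup} and the SFTness corollary. The only point requiring a moment's care is the containment $\C \subset \pow(G \setminus \{e_G\})$, i.e.\ confirming that inductive intervals never contain the identity; this follows directly from the definition of an II as consisting of elements $g(t)$ with $t_i \neq 0$ for some $i$, so that $e_G = g(0,\dots,0)$ is excluded. No new machinery is needed, and the recoding through finite versus infinite dimensions in the polycycle structure plays no role in the argument.
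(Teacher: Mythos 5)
Your proposal is correct and matches the paper's own proof exactly: both cite Lemma~\ref{lem:Quasigroup} to get the avo property for all subsets of $G \setminus \{e_G\}$, specialize to inductive intervals, and apply the preceding corollary. The extra check that inductive intervals exclude the identity is a harmless (and valid) elaboration the paper leaves implicit.
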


\begin{proof}
We showed in Lemma~\ref{lem:Quasigroup} that $X$ is avo for all subsets of $G \setminus \{e\}$. In particular it is avo for iterated intervals, thus it is SFT.
\end{proof}

For shift groups beyond polycyclic, one cannot expect to obtain SFTness from an avo property. Namely, every group that has a non-f.g.\ subgroup admits a group shift which is not of finite type \cite{Sa18e}. Group shifts are avo for all sets, so in particular there cannot exist any family of shapes on such a group, which would allow deducing SFTness of avoshifts for that shape. (Finitely-generated groups where all proper subgroups are finitely-generated are known to exist, see e.g.\ \cite{Ol83}, but simple examples are not known.)

For the lamplighter group $\Z_2 \wr \Z$, we note that there is even a sofic group shift which is not SFT, so ``avo implies SFT'' is not even true for sofic shifts, for any family of sets $\C$. (The same can be proved on many other groups with a sufficiently strong simulation theory.)

\begin{proposition}
On the lamplighter group $G$, there is a sofic group shift (thus an avoshift for all subsets $G \setminus \{e\}$), which is not SFT.
\end{proposition}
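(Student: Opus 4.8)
The goal is to exhibit a sofic group shift on the lamplighter group $G = \Z_2 \wr \Z$ that is not of finite type, showing that the implication ``avo $\implies$ SFT'' (valid on polycyclic groups) genuinely requires polycyclicity, and fails even within the sofic class.

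\textbf{Plan.} The plan is to produce the example via a simulation/factoring argument. The lamplighter group has a rich enough geometry (it is metabelian, amenable, but not polycyclic, since $\bigoplus_\Z \Z_2$ is not finitely generated) that one can encode configurations that are ``eventually constant'' along the lamp coordinate. First I would describe a concrete group shift. A natural candidate uses the group structure of the lamp subgroup: let the alphabet be $\Z_2$ and consider the subshift $X$ consisting of those $x \in \Z_2^G$ on which the ``cellwise sum over a lamp'' or a suitably chosen linear constraint coming from the group structure holds. Concretely, I would pick $X$ to be the kernel of a shift-commuting homomorphism $\Z_2^G \to \Z_2^G$, i.e.\ a linear cellular-automaton-type constraint, whose defining relation forces a long-range parity condition. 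Being a kernel of a continuous shift-commuting group homomorphism, $X$ is automatically a group shift, hence avo for all subsets of $G \setminus \{e_G\}$ by Lemma~\ref{lem:Quasigroup}.

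\textbf{Soficity.} To show $X$ is sofic, I would exhibit it as a factor of an SFT. The cleanest route is to use the known strong simulation results available on the lamplighter group: groups with a sufficiently strong ``simulation theorem'' can realize suitable effective or algebraic subshifts as sofic shifts. Since the relation defining $X$ is a linear, recursively-describable constraint, one can build an SFT $Y$ on $G$ together with a block/sliding-block-code factor map $\phi : Y \to X$ whose image is exactly $X$; the SFT $Y$ carries bookkeeping symbols that propagate the parity information along the lamp coordinate, and $\phi$ erases these to leave only the $\Z_2$-valued lamp data. One must verify that $\phi$ is onto $X$ and that $Y$ is genuinely finite type, which is where the ``sufficiently strong simulation theory'' is invoked.

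\textbf{Failure of SFTness.} The main obstacle, and the crux of the proof, is showing $X$ is \emph{not} SFT. I would argue by contradiction: if $X$ were an SFT with window $\ball_m$, then local legality on $\ball_m$-patterns would imply global legality. The strategy is to construct, for each $m$, a configuration that is locally legal on every $\ball_m$-window but globally illegal, exploiting that the long-range parity constraint cannot be detected within any fixed finite window because the non-finitely-generated lamp subgroup lets the relevant ``support'' of the constraint escape to infinity. Concretely, one produces configurations whose defining sum is violated only ``at infinity'' along the lamp direction, so that every bounded patch looks consistent. This locally-legal-but-not-globally-legal family witnesses non-SFTness. I expect the delicate point to be arranging the constraint so that it is simultaneously (i) expressible as a clean algebraic/group condition making $X$ a group shift, (ii) realizable as a sofic factor, and (iii) inherently non-local due to the non-finitely-generated subgroup; balancing these three requirements, rather than any single verification, is the hard part.
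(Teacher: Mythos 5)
Your high-level plan matches the paper's: observe that any group shift is avo for all subsets (Lemma~\ref{lem:Quasigroup}), get non-SFTness from the non-finitely-generated lamp subgroup $\bigoplus_{i\in\Z}\Z_2$, and get soficity from a simulation theorem for the lamplighter group (the paper cites the simulation theorem of Barbieri--Salo for exactly this). However, your concrete candidate for $X$ has a genuine defect: you propose to take $X$ to be the kernel of a continuous shift-commuting group homomorphism $\Z_2^G\to\Z_2^G$, i.e.\ a linear cellular automaton. By continuity such a map has a finite neighborhood $N$, so its kernel is cut out by the single local condition ``the local rule evaluates to $0$ on $x|gN$ for every $g$'', i.e.\ by finitely many forbidden patterns up to translation. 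That is an SFT by definition, so this candidate cannot witness the claim; a ``long-range parity condition'' simply cannot be the kernel of a \emph{continuous} shift-commuting endomorphism of the full shift. The non-locality has to be built into the subshift itself, not into a sliding-block constraint.

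The paper's fix is to take the two-symbol group shift $\{0^H,1^H\}$ on the non-f.g.\ group $H=\bigoplus_{i\in\Z_+}\Z_2$ (not SFT by \cite{Sa18e}, essentially because any finite set of forbidden patterns only constrains a finitely generated subgroup), pull it back to $\bigoplus_{i\in\Z}\Z_2$, and then take the \emph{free extension} to the lamplighter group, i.e.\ the subshift on $G$ defined by the same (infinite, unbounded-diameter) family of forbidden patterns. Free extensions preserve both the group-shift structure and non-SFTness, and the simulation theorem of \cite{BaSa24} applies to subshifts of this specific form (free extensions of computable subshifts on the lamp subgroup) to give soficity. Your locally-legal-but-globally-illegal strategy for non-SFTness is the right mechanism, but it only works once the example is of this free-extension type rather than a kernel of a linear CA; as written, your steps (i) and (iii) are in direct conflict.
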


\begin{proof}
Every group shift is avo for the family of all sets, so it suffices to find a sofic group shift which is not SFT. The two-symbol group shift on $\bigoplus_{i \in \Z_+} \Z_2$ is of course not SFT \cite{Sa18e}. It is easy to see that also its pullback in the sense of \cite{BaSa24} to $\bigoplus_{i \in \Z} \Z_2$ is not SFT but is still a group shift. By \cite{Sa18e}, its free extension to any supergroup (i.e.\ the subshift defined on the larger group by the same forbidden patterns) is also non-SFT but is still a group shift, in particular we get a non-SFT group shift on the lamplighter group. By the simulation theorem of \cite{BaSa24}, this specific form of subshift is automatically sofic, since the two-symbol subshift on $\bigoplus_{i \in \Z_+} \Z_2$ is obviously computable. 
\end{proof}

\subsection{Free group SFTs are avo}

We recall a definition from \cite{Sa22d}.

\begin{definition}
Let $G$ be free on generators $S$. A \emph{tree convex set} of $G$ is $C \Subset G$ such that if $u, w \in C$ and $v$ lies on the geodesic between $u, w$, then if $\min(d(v, u), d(v, w)) = r$, the ball $v \ball_{r-1}$ is contained in $C$. A \emph{limit tree convex set} is a possibly infinite set with the same property.
\end{definition}

\begin{lemma}
A set is a limit tree convex set if and only if it is a limit of tree convex sets in Cantor space.
\end{lemma}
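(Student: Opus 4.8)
The plan is to prove both implications directly from the pointwise (product-topology) description of convergence in $\pow(G) \cong \{0,1\}^G$: a sequence $(C_i)_i$ converges to $C$ exactly when, for each fixed $g \in G$, the truth value of $g \in C_i$ is eventually equal to that of $g \in C$. For the direction ``limit tree convex implies a limit of tree convex sets'' I would exhibit the explicit approximating sequence $C_i = C \cap \ball_i$; each $C_i$ is finite, and $C_i \to C$ is immediate since every $g$ lies in $\ball_i$ for all large $i$. The only real content is that each $C_i$ is again tree convex, and for this I would isolate two sub-facts: (a) tree convexity is preserved under intersection, and (b) every ball $\ball_i$ is tree convex.

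Fact (a) is immediate from the definition: if $u,w \in A \cap B$ and $v$ lies on their geodesic with $r = \min(d(v,u),d(v,w))$, then $v\ball_{r-1} \subset A$ and $v\ball_{r-1} \subset B$ by tree convexity of $A$ and of $B$, so $v\ball_{r-1} \subset A \cap B$. Granting (a) and (b), the set $C_i = C \cap \ball_i$ is an intersection of the (limit) tree convex set $C$ with the tree convex ball $\ball_i$, hence tree convex, which completes this direction once $C_i \to C$ is noted.

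Fact (b) is where the work lies, and I expect it to be the main obstacle. Since the Cayley graph of a free group is a tree, I would argue via tree geometry. Given $u, w \in \ball_i$ and $v$ on the geodesic $[u,w]$ with $r = \min(d(v,u),d(v,w))$, let $z$ be the projection of $e$ onto $[u,w]$ (the median of $e,u,w$), and write $\alpha = d(e,z)$, $\beta = d(z,u)$, $\gamma = d(z,w)$, so that $d(e,u) = \alpha+\beta$, $d(e,w)=\alpha+\gamma$, and every point of $[u,w]$ has distance to $e$ equal to $\alpha$ plus its distance to $z$. For $p \in v\ball_{r-1}$ the triangle inequality gives $d(e,p) \le d(e,v) + (r-1) = \alpha + d(z,v) + r - 1$, and a short case analysis on the position of $v$ along $[u,w]$ shows $d(z,v) + r \le \max(\beta,\gamma)$; hence $d(e,p) \le \alpha + \max(\beta,\gamma) - 1 = \max(d(e,u),d(e,w)) - 1 \le i$, so $v\ball_{r-1} \subset \ball_i$, as required.

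For the converse, suppose $C_i \to C$ with each $C_i$ finite and tree convex, and take $u,w \in C$, $v$ on $[u,w]$, and $r = \min(d(v,u),d(v,w))$; I must show $v\ball_{r-1} \subset C$. By pointwise convergence there is $N$ with $u,w \in C_i$ for all $i \ge N$. The geodesic $[u,w]$ and the value $r$ depend only on $G$, not on the set in question, so tree convexity of $C_i$ gives $v\ball_{r-1} \subset C_i$ for every $i \ge N$. As $v\ball_{r-1}$ is finite, each of its finitely many points lies in $C_i$ for all large $i$, hence lies in $C$; thus $v\ball_{r-1} \subset C$, and $C$ satisfies the tree convexity condition, i.e. it is a limit tree convex set.
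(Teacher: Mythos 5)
Your proof is correct and takes essentially the same route as the paper's: the converse direction is exactly the observation that the tree-convexity condition is closed under pointwise limits, and the forward direction uses the same approximating sequence $C_i = C \cap \ball_i$. The only difference is that you carefully verify that $C \cap \ball_i$ is tree convex (via closure under intersection and tree convexity of balls, your fact (b) being checked correctly with the median argument), a step the paper simply asserts.
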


\begin{proof}
The condition is clearly closed so limits of tree convex sets are limit tree convex. Let then $C$ be limit tree convex. Then $C_r = C \cap \ball_r$ satisfies the condition, so is tree convex for all $r$, and $C$ is a limit of these sets.
\end{proof}

We also need the standard notion of a \emph{geodesically convex set}: $C \subset G$ is geodesically convex in a free group $G$ if the unique path between any $a, b \in C$ is contained in $C$.

If $\C$ is a family of sets, the corresponding \emph{extension sets} are $\D = \{C \subset \pow(G \setminus \{e\}) \;|\; C \in \C \wedge C \cup \{e\} \in \C\}$.

\begin{theorem}
\label{thm:FreeGroupSFTUniformlyAvo}
A subshift on a free group is an SFT if and only if it is uniformly avo on the extension sets of limit tree convex sets.
\end{theorem}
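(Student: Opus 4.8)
The plan is to prove both implications by reducing to the general machinery already in place, after checking the relevant combinatorial properties of tree convex sets and of the extension sets $\D$. Write $\mathcal{T}$ for the family of limit tree convex sets, so that $\D = \{C \in \mathcal{T} : e \notin C,\ C \cup \{e\} \in \mathcal{T}\}$. I would isolate three facts, each proved directly from the definitions: that a proper tree convex set can always be enlarged by one boundary vertex while remaining tree convex (from which goodness of $\D$ follows); that $G$ itself admits a $\D$-well-ordering, i.e.\ is $\D$-constructible; and the \emph{local-to-global principle}, that every SFT is uniformly SFT on $\mathcal{T}$. Granting these, the backward implication is Lemma~\ref{lem:UniformSFT} applied to $D = G$, and the forward implication is Lemma~\ref{lem:SFTimpliesAvo} applied to the good family $\D$.

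For the backward direction, assume $X$ is uniformly avo on $\D$ and order $G$ by breadth-first search in the Cayley tree: enumerate $\ball_0, \ball_1, \ball_2, \ldots$, finishing each sphere before starting the next. For an element $s$ first appearing at distance $k$, its down-set ${\downarrow s}$ equals $\ball_{k-1}$ together with part of the sphere of radius $k$. The tree convexity constraints for such a set come only from pairs $u,w$ at distance $k$, and for these a short computation with $r = \min(d(v,u),d(v,w))$ shows that the required ball $v\ball_{r-1}$ sits inside $\ball_{k-1}$; hence tree convexity of every down-set reduces to the inclusion $\ball_{k-1} \subseteq {\downarrow s}$, which holds by construction. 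Thus ${\downarrow s}$ and ${\downarrow s} \cup \{s\}$ are tree convex, and since tree convexity is left-invariant, $s^{-1}{\downarrow s}$ and $s^{-1}{\downarrow s} \cup \{e\} = s^{-1}({\downarrow s} \cup \{s\})$ are tree convex with $e \notin s^{-1}{\downarrow s}$, so $s^{-1}{\downarrow s} \in \D$. The BFS order is therefore a $\D$-well-ordering, $G$ is $\D$-constructible, and Lemma~\ref{lem:UniformSFT} gives that $X = X|G$ is an SFT.

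For the forward direction, let $X$ be an SFT with window $\ball_m$ and let $\pat$ consist of the $\ball_m$-patterns not appearing in $X$. I would first reduce uniform SFTness on $\mathcal{T}$ to finite tree convex sets: for limit tree convex $C$ each $C \cap \ball_r$ is finite tree convex, so any $x \in A^C$ avoiding $\pat$ restricts to a globally legal pattern on each $C \cap \ball_r$ by the finite case, whence $x \in X|C$ by compactness. Once uniform SFTness on $\mathcal{T}$ is known, it holds a fortiori on $\D \subseteq \mathcal{T}$, and since $\D$ is good (enlarge $C \cup \{e\}$ by a leaf-ward boundary vertex $q$ and translate by $q^{-1}$), Lemma~\ref{lem:SFTimpliesAvo} yields that $X$ is uniformly avo on $\D$.

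The heart of the matter, and the step I expect to be the main obstacle, is the local-to-global principle for finite tree convex $C$: if $x \in A^C$ avoids $\pat$ then $x \in X|C$. I would build $C$ up one element at a time along a $\mathcal{T}$-well-ordering, maintaining the invariant that the current prefix is globally legal; the induction step reduces to a \emph{gluing} claim, namely that for tree convex $D$ with $D \cup \{t\}$ tree convex and $x|D$ globally legal one has $\follow(x|D, t) = \follow(x|(t\ball_m \cap D), t)$. The inclusion $\subseteq$ is monotonicity of $\follow$; the reverse is where the tree is essential. Given $y \in X$ with $y|D = x|D$ and a witness $z \in X$ realizing a symbol $a \in \follow(x|(t\ball_m \cap D), t)$, one wants to splice $z$ near $t$ onto $y$ far from $t$. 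The key geometric fact is that, the Cayley graph being a tree, any window $g\ball_m$ centered at distance $m$ from the attachment edge of $t$ is confined to a single branch together with that edge, so it never sees two branches spliced differently; thus two points of $X$ agreeing on a separating sphere of thickness $m$ can be merged across it into a point of $X$. Making this precise — pinning down the separating sphere forced by tree convexity and checking that every window lies entirely in the $y$-region, entirely in the $z$-region, or inside the agreement sphere — is the one genuinely new ingredient, with the remainder being bookkeeping through the general lemmas.
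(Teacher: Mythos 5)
Your argument is correct in outline and rests on the same two pillars as the paper's proof: a $\D$-well-ordering of $G$ fed into Lemma~\ref{lem:UniformSFT} for the backward direction, and the fact that in a tree a filled $m$-ball is a cutset across which configurations of $X$ can be spliced for the forward direction. The backward direction is essentially the paper's: the paper obtains an $\omega$-ordering of $G$ with tree convex prefixes from the convexoid property of the tree convex sets (citing \cite{Sa22d}), and your BFS order is a concrete instance of this; your verification that BFS down-sets are tree convex is fine (indeed the computation works for all pairs $u,w$ in the down-set, not only those at distance exactly $k$, since any $v$ on the geodesic satisfies $d(v,e)+\min(d(v,u),d(v,w)) \leq \max(d(u,e),d(w,e))$). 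The forward direction is where you genuinely reorganize: the paper proves the uniform avo property in one direct step, using that $C \cup \{e\}$ is geodesically convex and that tree convexity fills the $r$-ball around the midpoint of any geodesic of length $\geq 2r+1$ from $e$, so the legal extensions at $e$ are determined by $C \cap \ball_{O(r)}$; you instead first establish uniform SFTness on all limit tree convex sets by a local-to-global induction and only then deduce uniform avo. This buys a slightly stronger intermediate statement (the paper only recovers uniform SFTness afterwards via Lemma~\ref{lem:UniformSFT}), but note that your gluing claim \emph{is} the uniform avo statement at $t$, so the trip through Lemma~\ref{lem:SFTimpliesAvo} is a detour rather than a shortcut; it also makes you depend on goodness of $\D$ for infinite sets, which is not as immediate as ``add a leaf-ward boundary vertex'' (adding an arbitrary adjacent vertex can destroy tree convexity) --- though you can sidestep this entirely, since you have uniform SFTness on all of $\mathcal{T}$ and $C \cup \{e\} \in \mathcal{T}$ directly. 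Two constants to repair in the heart of the argument: the determining set should be $t\ball_{2m+1} \cap D$ rather than $t\ball_m \cap D$, since the separating ball guaranteed by tree convexity sits at distance about $m+1$ from $t$ and has radius about $m$ (this is exactly the $2r+1$ in the paper's version), and the splice must be performed simultaneously along every branch of $D$ leaving $t\ball_{2m+1}$, with $y$ and $z$ agreeing on the separating balls because those balls lie in $D \cap t\ball_{2m+1}$ where both equal $x$.
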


\begin{proof}
Let $G$ be a free group. Suppose first that $X$ is an SFT. Consider $x \in X|C$ where $C$ and $\dot C = C \cup \{e_G\}$ are limit tree convex sets. Let $X$ be an SFT with forbidden patterns of diameter at most $r$. Recall that $\dot C$ is geodesically convex \cite{Sa22d}. If there is no geodesic path from $e_G$ which is of length at least $2r$, we have a bound on the diameter of $\dot C$ and we are done.

Since the radius-$r$ ball is a cutset of the group, when determining the possible legal continuations we need not look past any such ball. For any geodesic path of length at least $2r+1$ in $C$, the $r$-ball around the central element is contained in $C$ since $C$ is a limit tree convex set. We conclude that it suffices to look a bounded distance away from the identity element to know the legal extensions, which implies the uniform avo property.

Suppose then that $X$ is uniformly avo on the limit tree convex sets. In particular it is uniformly avo with some avoradius $r$ on the tree convex sets. By \cite{Sa22d}, the tree convex sets form a convexoid. This implies that we can order $G$ with order type $\omega$ so that every prefix of it is tree convex. Then Lemma~\ref{lem:UniformSFT} implies that $X$ is a subshift of finite type with window size $r$.
\end{proof}

\begin{example}
\label{ex:FreeGroupGeodesics}
On the free group $F_2 = \langle a, b \rangle$, there is an SFT which is not avo for geodesically convex sets. Consider an SFT over alphabet $\Z_2^2$ with the rule that if $x_{gb} = (s_1, t_1), x_{gab} = (s_2, t_2), x_{g} = (s_3, t_3)$ then $t_3 = s_1 + s_2$. Then $b^{-1} \langle a \rangle \cup \{b^{-1} a^n b\}$ is geodesically convex and has avoradius $\Omega(n)$. \qee
\end{example}

\subsection{More applications of Lemma~\ref{lem:UniformSFT}}

Lemma~\ref{lem:UniformSFT} (which deduces uniform SFT from uniform avo) has implications on groups other than polycyclic ones. It only needs a constructible family of sets, and these are easy to find. We illustrate the result by proving the following result from \cite{Br18}, which applies to all groups.

\begin{proposition}
Let $X \subset A^G$ be a topologically strong spatial mixing subshift on any f.g.\ group $G$. Then $X$ is SFT.
\end{proposition}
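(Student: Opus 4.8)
The plan is to combine Theorem~\ref{thm:TSSM} with Lemma~\ref{lem:UniformSFT}, exactly as was done for the polycyclic and free-group cases. By Theorem~\ref{thm:TSSM}, a subshift $X \subset A^G$ has topological strong spatial mixing if and only if it is uniformly avo for $\pow(G \setminus \{e_G\})$, the family of \emph{all} subsets of $G \setminus \{e_G\}$. So from the TSSM hypothesis I immediately obtain uniform avo with some avoradius $r$ for every subset $C \subset G \setminus \{e_G\}$. The remaining task is to feed this into Lemma~\ref{lem:UniformSFT}, whose conclusion ``$X|D$ is SFT with window size $r$'' for $D = G$ is precisely what we want (that $X$ itself is SFT).

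To apply Lemma~\ref{lem:UniformSFT} I need $G$ to be $\C$-constructible, where $\C = \pow(G \setminus \{e_G\})$; that is, I need a well-ordering $<$ of $G$ such that for each $s \in G$ the translated downset $s^{-1}{\downarrow s}$ lies in $\C$. But since $\C$ is the family of all subsets of $G \setminus \{e_G\}$, this membership condition is essentially free: the only requirement is that $e_G \notin s^{-1}{\downarrow s}$, i.e.\ that $s$ itself does not precede $s$ in the order, which holds automatically for any strict well-order. Hence \emph{any} well-ordering of $G$ is a $\C$-well-ordering, and such an ordering exists for the countable (indeed finitely generated) group $G$ — one can even take order type $\omega$ by enumerating $G$ as $g_0, g_1, g_2, \ldots$. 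This makes $G$ $\C$-constructible.

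With uniform avo (avoradius $r$) and $\C$-constructibility of $G$ in hand, Lemma~\ref{lem:UniformSFT} applied to $D = G$ gives that $X|G = X$ is SFT with window size $r$, completing the proof. Concretely, the set of forbidden patterns is all patterns on the $r$-ball that do not appear in $X$, and the lemma's induction along the chosen well-order shows these suffice to cut out exactly $X$.

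I do not expect any real obstacle here: the entire content has been front-loaded into Theorem~\ref{thm:TSSM} and Lemma~\ref{lem:UniformSFT}. The only point requiring a moment's care is the observation that the family of \emph{all} subsets of $G \setminus \{e_G\}$ trivially admits a constructible well-ordering (unlike the inductive-interval or tree-convex settings, where constructibility was the substantive combinatorial input); but this is immediate. If one wanted to be fully explicit one could note that finite generation of $G$ is used only to match the hypotheses and to guarantee countability/enumerability, and is not otherwise essential to the SFT conclusion itself.
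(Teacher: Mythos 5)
Your proof is correct and follows exactly the paper's own route: TSSM gives uniform avo for all subsets of $G \setminus \{e_G\}$ by Theorem~\ref{thm:TSSM}, constructibility of that family is trivial since any well-order works, and Lemma~\ref{lem:UniformSFT} applied to $D = G$ finishes. No further comment needed.
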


\begin{proof}
Recall that TSSM is equivalent to being uniformly avo for the class of all sets $\C \subset \pow(G \setminus \{e_G\}$. Obviously $\C$ is constructible (one can well-order any set in any way). By Lemma~\ref{lem:UniformSFT} $X|D$ is SFT with window size $r$ for any $D \subset G$, in particular this is true for $D = G$.
\end{proof}

Less trivial examples exist of constructible families, for example the convexoids $\C$ (slightly generalizing the standard \emph{convex geometries}) considered in \cite{Sa22d} have this property. A \emph{convexoid} is a family $\C$ of finite subsets of a set $G$ which satisfies $\emptyset \in C$ and $\forall B \Subset G: \exists C\in\C: C \supset B$, and which have the \emph{corner addition property}
\[ C, D \in \C \wedge C \subsetneq D \implies \exists a \in D \setminus C: C \cup \{a\} \in \C \]

\begin{lemma}
\label{lem:ConvexoidComputable}
Let $\C$ be a convexoid. Then the family of extension sets of $\C$ is constructible. 
\end{lemma}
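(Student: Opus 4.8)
The goal is to show that for a convexoid $\C$, the family $\D$ of extension sets of $\C$ is constructible, meaning (per Definition~\ref{def:Constructible}) that every $C \in \D$ admits a $\D$-well-ordering and the group $G$ itself admits one. The natural tool is the corner addition property of convexoids, which is exactly designed to let us build up any member of the family one element at a time. The plan is to first well-order $G$ so that every proper prefix, translated back to the identity, lands (after adding the new base element) inside $\D$, and then to observe that the same construction restricts to give a well-ordering of each individual $C \in \D$.

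\textbf{Key steps.} First I would use the defining axioms of a convexoid to produce, by transfinite recursion, an enumeration $(s_\alpha)_{\alpha < \lambda}$ of $G$ (or of a fixed $C$) such that each initial segment $\{s_\beta \colo \beta < \alpha\}$ is a member of $\C$. Concretely, starting from $\emptyset \in \C$, at each stage I have built a set $C_\alpha \in \C$; since every finite subset of $G$ is contained in some member of $\C$ (the covering axiom), and since $\C$ is closed under the corner addition relation connecting any $C \subsetneq D$ in $\C$, I can always find an element $a$ with $C_\alpha \cup \{a\} \in \C$, and I add it as $s_\alpha$. This produces a well-ordering whose prefixes are all in $\C$. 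The crucial extra point is to check that the \emph{extension} requirement holds: for the new element $s_\alpha = t$, the translated downset $t^{-1}({\downarrow t})$ must itself lie in $\C$ and $t^{-1}({\downarrow t} \cup \{t\}) = t^{-1}(\text{prefix})$ must also lie in $\C$, so that $t^{-1}{\downarrow t}$ is in the extension family $\D$. This is where the definition of extension sets ($C \in \C \wedge C \cup \{e\} \in \C$) interacts with the corner addition property: the prefix $\{s_\beta \colo \beta \leq \alpha\}$ is in $\C$ by construction, and the prefix with $t$ removed is in $\C$ by the previous stage, so after translating by $t^{-1}$ both conditions are met, placing $t^{-1}{\downarrow t}$ in $\D$ as required.

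\textbf{Main obstacle.} The delicate point is the translation invariance interaction: convexoids as defined are families of subsets of an abstract set $G$ and need not be translation-invariant, whereas a $\D$-well-ordering requires the translated downsets $s^{-1}{\downarrow s}$ to land in $\D$. If the convexoid is translation-invariant (which is the setting of \cite{Sa22d} for convexoids on groups), then the argument above goes through directly, since prefixes being in $\C$ translates to translated prefixes being in $\C$. I would therefore carry out the construction assuming the convexoid is translation-invariant (as is implicit in the paper's usage), and the real work is simply verifying that the corner-addition recursion can be run to exhaust all of $G$ (or all of $C$) while keeping every prefix in $\C$; the covering axiom guarantees we never get stuck on a finite set, and for the infinite case I would take increasing unions at limit stages, using that prefixes of the order are increasing unions of convexoid members. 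Wrapping up, restricting the same recursion to a fixed $C \in \D \subset \C$ gives the $\D$-well-ordering of $C$, completing the verification that $\D$ is constructible.
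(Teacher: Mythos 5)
Your proposal is correct and follows essentially the same route as the paper: the paper likewise iterates the corner addition property to produce what it calls an \emph{anti-shelling} (an ordering whose prefixes stay in $\C$), orders each $D$ in the extension family by an anti-shelling from $\emptyset$, and orders $G$ itself by concatenating anti-shellings along an increasing chain of convex sets supplied by the covering axiom. Your observation that translation invariance of $\C$ is needed to pass from ``prefixes lie in $\C$'' to ``translated downsets lie in the extension family'' is a fair point that the paper leaves implicit (its convexoids are inherited from the translation-invariant setting of \cite{Sa22d}).
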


\begin{proof}
Let $\D$ be the family of extension sets, i.e.\ all $C \in \C$ such that $e_G \notin C$ and $C \cup \{e_G\} \in \C$. We check that each $D \in \D$ and $G$ itself are $\D$-constructible, i.e.\ admit $\D$-well-orders.

First, we recall that repeated application of the corner addition property for $C \Subset D$ provides an \emph{anti-shelling from $C$ to $D$}, or ordering of $D \setminus C$ such that any prefix of this order, together with $C$, is convex (meaning belongs to the convexoid $\C$). If $D \in \C$, then one can verify that an anti-shelling from $\emptyset$ to $D$ is a $\D$-well-order.

From the second property in the definition, we can list an increasing sequence of convex sets $C_1, C_2, \ldots$. Starting from $C_0 = \emptyset \in \C$, and concatenating anti-shellings from $C_i$ to $C_{i+1}$ for all $i$, one can check that the resulting order of $G$ (with order type $\omega$) is a $\D$-well-order.
\end{proof}

Thus for example the extension sets of tree convex sets defined in Section~\ref{sec:Examples} are extendable. We could thus conclude from Lemma~\ref{lem:UniformSFT} that the uniform avo property on the free group, for extension sets of tree convex sets, implies the SFT property. Since we already characterized SFTs are the uniformly avo subshifts for the extension sets of limit tree convex sets, this is not worth stating here (in the uniform avo case, taking the limits does not matter). Convexoids for strongly polycyclic groups are built in \cite{Sa22d}, as well as on some other groups which are not discussed in the present paper.




\section{Decidability results}
\label{sec:Decidability}

Next, we explain how one can, starting from a finite set of forbidden patterns, prove (algorithmically or in any sufficient proof system) that an SFT $X$ is indeed an avoshift for a set of shapes and compute its language and perform comparisons between such subshifts. In the case of inductive intervals on polycyclic groups, we can also compute traces (restrictions to subgroups in the subnormal series). In the next section we explain how one can also compute some factors (avofactors), which in the case of group shifts include all algebraic factors. 

A common situation in symbolic dynamics is that we are applying partial algorithms to undecidable problems, and we know that they work on some subclass of the problems, but this class itself is not obviously decidable, we usually want that our algorithm never make an incorrect claim (in addition to making correct claims on the subclass). Namely, this allows us to blindly apply our algorithms without even knowing whether they belong to the class. The algorithm is ``allowed'' to work correctly even work on some instances that are not in this class, or it may never halt, it is simply not allowed to give an incorrect answer. We make this slightly more precise in the following definition.

\begin{definition}
Let $I$ be some countable set of (codings of) problem instances, $J \subset I$ a subset of these instances (thought of as a nice subclass), and $T \subset I$ the ``positive instances''. We say an algorithm \emph{safely partially solves $(I, T)$} if, when given $i \in I$, it either eventually outputs ``yes'', ``no'', or never halts. Furthermore, if it says ``yes'' then $i \in T$, and if it says ``no'' then $i \notin T$. We say the algorithm \emph{safely solves $(I, T)$ on $J \subset I$} if it safely partially solves $(I, T)$ and, given $i \in I$, then it eventually gives an answer (thus indeed the correct answer) if $i \in J$.
\end{definition}

This definition readily generalizes from decision problems to problems with more complex output (like computing the language of a subshift).



We define some computability properties for the sets $\C$. A family of subsets of $G$ is \emph{upper semicomputable} if we can uniformly in $r$ compute a sequence of upper approximations to $B = \{C \cap \ball_r \colo C \in \C\}$, which eventually reaches $B$ (though we cannot necessarily detect when this has happened). We say such a family $\C$ is \emph{computable} if it is also lower semicomputable, meaning we can actually compute the set of finite subsets of sets in $\C$ (note that this does not necessarily mean we can decide $C \in \C$ for a given finite set $C$).

\begin{lemma}
Let $G$ be a finitely-generated group and let $\C \subset \pow(G)$ be any topologically closed family of shapes. Let $X$ be an SFT defined by a finite set of forbidden patterns $\qat$, such that $\qat$ uniformly defines $X$ on all $C \in \C$. Then for all $r \in \N$ there exists $R$ such that whenever $C \in \C$ and $x \in \Sigma^{C \cap \ball_R}$ is locally $\qat$-valid in $X$, then $x|C \cap B_r$ is globally valid in $X$.
\end{lemma}

\begin{proof}
Suppose that the conclusion fails for some fixed $r$. Then we can find for arbitrarily large $R$ a shape $C_R \in \C$, and a locally valid pattern $x_R \in \Sigma^{C_R \cap \ball_R}$ (containing no $\qat$-pattern), such that $x|C_R \cap B_r$ is not globally valid. 

Since $\C$ is closed, we can pass to a subsequence of $C_R$ that tends to $C \in \C$, in which case also $C_R \cap \ball_R$ tends to $C$. By compactness of Cantor space, we can find a converging subsequence of the patterns $x_R$ that tends to a configuration $x$ on $C \in \C$ which contains no pattern from $\qat$

By the assumption that $\qat$ defines the restriction $X|C$, we have $x \in X|C$. But $x|\ball_r \cap C = x_R|C_R \cap \ball_r \not\sqsubset X$ for large enough $R$, a contradiction.
\end{proof}


\begin{lemma}
\label{lem:ComputingPatterns}
Let $G$ be a finitely-generated group with decidable word problem, and let $\C \ni \emptyset$ be a topologically closed, upper semicomputable, extendable family of shapes. Then there is an algorithm that, given a finite set of patterns $\pat$ defining an SFT $X$ on $G$, enumerates all finite families of forbidden patterns $\qat$ that uniformly define the $\C$-restrictions of $X$.
\end{lemma}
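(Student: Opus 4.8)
The plan is to reduce the enumeration to a semidecision procedure: since $G$ has decidable word problem we can enumerate all finite sets $\qat$ of patterns (finite subsets of $G$ together with all labelings), so it suffices to exhibit a procedure that, given $\qat$, halts with acceptance exactly when $\qat$ uniformly defines the $\C$-restrictions of $X$; dovetailing this procedure over all candidates then enumerates precisely the working families. Write $Y_{\qat}$ for the SFT on $G$ defined by forbidding $\qat$, so that ``$\qat$ works'' means $Y_{\qat}|C = X|C$ for every $C\in\C$. The degenerate case $X=\emptyset$ is handled separately by forbidding the empty pattern.

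First I would decompose ``works'' into two halves. The soundness half, ``$X\subseteq Y_{\qat}$'', says no pattern of $\qat$ occurs in $X$; since $X$ is the SFT given by $\pat$, for each $Q\in\qat$ the statement $Q\not\sqsubset X$ is semidecidable (search for a radius on which $Q$ admits no $\pat$-legal extension, a finite check because the word problem is decidable), and a finite conjunction of semidecidable statements is semidecidable. The reverse global inclusion ``$Y_{\qat}\subseteq X$'', i.e.\ that every $\qat$-legal point avoids $\pat$, is likewise semidecidable: by compactness it fails iff some $\qat$-legal pattern on a ball contains a central $\pat$-pattern, which is decidable for each fixed radius. Together these certify $X=Y_{\qat}$, reducing the remaining (completeness) content to a statement purely about $Y_{\qat}$: every $\qat$-legal configuration on a $C\in\C$ extends to a $\qat$-legal \emph{point} (which then lies in $X$ automatically).

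For this completeness half I would use the preceding lemma together with extendability and constructibility of $\C$. Extendability lets us fill $G\setminus C$ along a well-order whose translated down-sets lie in $\C$, so that extending a $\qat$-legal configuration on $C$ to a point reduces to iterating one-cell extensions, each controlled by the bounded window of $\qat$; the preceding lemma guarantees that, when $\qat$ genuinely works, a single radius $R$ witnesses all these extensions uniformly in $C$. The procedure therefore searches over $R$ and, using the upper-semicomputable approximations to the traces $C\cap\ball_R$, checks the finite condition that every $\qat$-legal pattern on such a trace admits a legal one-cell extension at the next construction site; this is decidable for each $R$, and I would argue it holds for some $R$ exactly when the completeness half holds, so the overall predicate is semidecidable.

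The main obstacle is precisely this completeness half, and the delicate point is keeping the bounded check both \emph{necessary} and \emph{sufficient}. It is inherently dangerous because the honest condition quantifies over infinite configurations and, taken naively, over positive membership in the language of $X$, which is undecidable for SFTs on general groups; the role of the compactness lemma is to collapse this to a single radius. At the same time one must not test one-cell extensions for \emph{all} locally $\qat$-legal patterns, since a working $\qat$ may carry redundant forbidden patterns creating ``phantom'' dead ends never realized by genuine configurations (for instance adjoining any $Q^{*}$ with $Q^{*}\not\sqsubset X$ preserves working). This is why the test is phrased for patterns that are $\qat$-legal on the \emph{whole} outer ball $C\cap\ball_R$: as $R$ grows these approximate the genuinely extendable local pictures, and the preceding lemma ensures a finite $R$ already suffices. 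Verifying that this filtration is correct — that the finite test accepts some $R$ if and only if the completeness half holds — is the crux of the argument.
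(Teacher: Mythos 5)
Your proposal is correct and follows essentially the same route as the paper: first semidecide that $\qat$ defines the same SFT as $\pat$ on all of $G$, then use the preceding compactness lemma to reduce the uniform-$\C$ property to a radius-$R$ one-cell-extension test over the (upper semicomputable) traces $C\cap\ball_R$, with soundness of that test supplied by transfinite filling along an extendability well-order and completeness by the lemma's choice of $R$. The only quibble is your throwaway phrasing of the semidecidability of $Y_{\qat}\subseteq X$ (the finite witness is a radius on which \emph{no} $\qat$-legal pattern contains a central $\pat$-pattern, rather than the existence of a bad pattern on some ball), but the conclusion you draw from it is the right one.
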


In particular, this algorithm recognizes the SFTs that are uniformly $\C$-SFTs. Note that here we need not explicitly discuss safety; 
 the algorithm simply will not enumerate any sets $\qat$ if such sets do not exist.

\begin{proof}
Since $X$ is SFT and $G$ has decidable word problem, we can compute upper approximations to the language. Note also that the equality of SFTs is semidecidable (see e.g.\ \cite{SaTo23}), so we can enumerate the complete list of sets of patterns $\qat$ which define $X$.

Thus it suffices to show that once we have found a set of patterns $\qat$ equivalent to the original, which additionally defines $X|C$ uniformly on $C \in \C$, then we can recognize this fact. 
For this, consider any such $\qat$ and let $R$ be given for $2r+1$ obtained from the previous lemma for the family $\qat$, meaning if $x \in \Sigma^{C \cap \ball_R}$ is a locally $\qat$-legal configuration, then $x|C \cap \ball_{2r+1}$ is globally legal.

We claim that then whenever $x \in \Sigma^{C \cap \ball_R}$ is $\qat$-legal for $C \in \C$, then $x$ has a $\qat$-legal extension to $e_G$. 
Namely, otherwise every extension $x|(C \cap \ball_{2r+1}) \sqcup a^e$ is actually globally illegal. But this is a contradiction, since clearly $x|(C \cap \ball_{2r+1})$ then already cannot be a globally valid pattern. 


This means that we can (for the fixed $\qat$) go through larger and larger $R$, and for fixed $(\qat, R)$ we can compute better and better approximations to the set of $R$-sized prefixes of shapes in $\C$. Since the limits of shapes from $\C$ are upper semicomputable, we eventually know the correct set of their subshapes of size at most $R$ (though of course we cannot necessarily recognize that we do). At this point, by the assumption on $R$ and the discussion of the previous paragraph, no matter what $\qat$-legal pattern we put on one of these shapes $M \Subset \ball_R \cap G \setminus \{e_G\}$, it will have at least one legal continuation to $e_G$ that does not contain a $\qat$-pattern.

At this point, the algorithm can conclude that $\qat$ is a set of forbidden patterns defining $X|C$ uniformly on $C \in \C$. Namely, if we have a configuration on $C \in \C$ which has no $\qat$-pattern, then by $\C$-extendability of $C$, we can order $G \setminus C$ so that prefixes of the order together with $C$ are (up to a shift) in $\C$, and inductively prove along this order that at any point of the process there is an extension containing no pattern from $\qat$.
\end{proof}

\begin{lemma}
Let $G$ be a finitely-generated group with decidable word problem, and let $\C \ni \emptyset$ be a topologically closed, upper semicomputable and extendable family of shapes. Then there is an algorithm that, given a finite set of patterns $\pat$ defining an SFT $X$ on $G$, safely solves emptiness on all $X$ which are uniformly SFT on $\C$. If $\C$ is constructible, it also safely solves emptiness on uniform $\C$-avoshifts $X$.
\end{lemma}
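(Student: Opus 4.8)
The plan is to reduce emptiness to inspecting a single finite family of forbidden patterns that uniformly defines the $\C$-restrictions of $X$, exploiting crucially that $\emptyset \in \C$.

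First I would invoke Lemma~\ref{lem:ComputingPatterns}: since $G$ is finitely generated with decidable word problem and $\C$ is topologically closed, upper semicomputable, extendable, and contains $\emptyset$, there is an algorithm that, on input $\pat$, enumerates all finite families $\qat$ that uniformly define the restrictions $X|C$ for $C \in \C$. The emptiness algorithm simply runs this enumeration and waits for the first such $\qat$ to be produced.

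The key observation is that the uniform definition at the degenerate shape $\emptyset \in \C$ already decides emptiness. For $C = \emptyset$ the only configuration is the empty pattern $\epsilon$, and $\epsilon \in X|\emptyset$ holds exactly when $X \neq \emptyset$. Since the enumerated $\qat$ defines $X|\emptyset$, we have $\epsilon \in X|\emptyset$ if and only if no pattern of $\qat$ occurs in $\epsilon$; but the only pattern that can occur in $\epsilon$ is the empty pattern itself. Hence $X = \emptyset$ if and only if the empty pattern belongs to $\qat$. So as soon as an enumerated $\qat$ appears, the algorithm outputs ``empty'' when the empty pattern lies in $\qat$ and ``nonempty'' otherwise. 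Safety is then immediate: Lemma~\ref{lem:ComputingPatterns} only ever enumerates families that genuinely uniformly define the $\C$-restrictions, so the verdict read off from the empty pattern is always correct; and if $X$ is not uniformly $\C$-SFT, no $\qat$ is produced and the algorithm simply never halts, which is permitted. For termination on the subclass $J$, note that if $X$ is uniformly $\C$-SFT then a valid $\qat$ exists and is eventually enumerated, so the algorithm halts with the correct answer.

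For the constructible case I would observe that uniform $\C$-avoshifts fall into exactly this same subclass. If $\C$ is constructible, then every $C \in \C$ is $\C$-constructible, so Lemma~\ref{lem:UniformSFT} applies with the common avoradius $r$ and yields that $X|C$ is SFT with window size $r$ simultaneously for all $C \in \C$; this is precisely uniform $\C$-SFTness. Thus uniform $\C$-avoshifts are uniformly $\C$-SFT, and the algorithm above already halts correctly on them. The main (and essentially only nontrivial) point is recognizing that the shape $\emptyset$ carries all the emptiness information once a uniform defining family is in hand, so that no separate and potentially undecidable global emptiness test is needed; everything else is bookkeeping atop the prior lemmas.
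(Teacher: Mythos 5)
Your proposal is correct and follows essentially the same route as the paper: run the enumeration of Lemma~\ref{lem:ComputingPatterns}, read off emptiness from whether the empty pattern lies in the first enumerated $\qat$ (using $\emptyset \in \C$), and reduce the avoshift case to the uniform SFT case via Lemma~\ref{lem:UniformSFT}. The only cosmetic difference is that the paper additionally runs a direct semidecision procedure for emptiness in parallel, which enlarges the set of instances on which the algorithm halts but is not needed for the stated claim.
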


\begin{proof}
If the given SFT $X$ is empty, we can recognize this fact. Otherwise, we will try to apply the previous theorem. Since its output is correct on all SFTs (when it gives an output), if it eventually outputs some $\qat$, we can determine emptiness: The empty pattern is in the language of a subshift if and only if the subshift is nonempty. Since $\emptyset \in \C$, the only possible reason why the empty pattern would be forbidden is that the empty pattern is directly in $\qat$. I.e.\ $X$ is empty if and only if $\qat \cap \Sigma^\emptyset \neq \emptyset$. If $X$ is uniformly SFT on $\C$, then eventually we find $\qat$ can can check this.

It suffices to show the last claim. Indeed, if $\C$ is constructible and $X$ is a uniform $\C$-avoshift, then $X$ is uniformly SFT on $\C$ by Lemma~\ref{lem:UniformSFT}.
\end{proof}

The following theorem implies in particular the theorem of \cite{BeKa24} that projective subdynamics of group shifts on $\Z^d$ can be computed effectively.

\begin{theorem}
\label{thm:PolycyclicQat}
Let $G$ be a polycyclic group with polycycle structure $((h_i)_i, (H_i)_i, (k_i)_i)$. Then given an SFT $X$, we can safely compute a set of forbidden patterns $\pat$ that defines $X$ uniformly on all inductive intervals if $X$ is an II-avoshift. The restriction $X|H_i$ is an avoshift for all $i$. Furthermore, we can effectively compute the forbidden patterns for $X|H_i$.
\end{theorem}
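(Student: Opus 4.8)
The plan is to handle the three assertions in turn, using the translation between uniform SFTness and the uniform avo property established in Lemmas~\ref{lem:SFTimpliesAvo} and~\ref{lem:UniformSFT}. To obtain $\pat$ I would apply Lemma~\ref{lem:ComputingPatterns} with $\C$ the family of inductive intervals of $G$. Its hypotheses hold: $\emptyset \in \C$, the family is topologically closed by Lemma~\ref{lem:Closed}, it is upper semicomputable because $G$ has decidable word problem and inductive intervals are given explicitly by tuples of $0$-grazing intervals, and it is extendable by Lemma~\ref{lem:IIWellOrder2}. The lemma then supplies an algorithm enumerating exactly those finite $\qat$ that uniformly define the $\C$-restrictions of $X$; since it never outputs an incorrect $\qat$, the procedure is safe. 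If $X$ is an II-avoshift, then Theorem~\ref{thm:PolycyclicUniformSFT} guarantees that $X$ is uniformly SFT on $\{G\} \cup \C$, so a valid $\qat$ exists and is eventually produced. I take $\pat = \qat$ and record its window radius $r$; because inductive intervals form a good family, Lemma~\ref{lem:SFTimpliesAvo} shows that $r$ is a common avoradius of $X$ for $\C$.

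For the second assertion the key point is that follower sets are insensitive to passing to $H_i$: if $D \subset H_i$, then $(X|H_i)|D = X|D$, since restricting $X$ to $H_i$ and then to $D$ is the same as restricting $X$ directly to $D$. Consequently $\follow_{X|H_i}(\,\cdot\,, e_G) = \follow_X(\,\cdot\,, e_G)$ on configurations with domain inside $H_i$. Now every inductive interval $C$ of $H_i$ (with its inherited polycycle structure) is an inductive interval of $G$ contained in $H_i \setminus \{e_G\}$, so any $C$-determining set for $X$ is also $C$-determining for $X|H_i$. Hence $X|H_i$ is avo for all inductive intervals of $H_i$, i.e.\ an avoshift.

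For the third assertion I would first note that $H_i$ is $\C$-constructible: Lemma~\ref{lem:IIWellOrder} applied to the polycyclic group $H_i$ yields a well-order of $H_i$ whose translated downsets are inductive intervals of $H_i$, and these are inductive intervals of $G$, so the order is a $\C$-well-order. Since $X$ is uniformly $\C$-avo with avoradius $r$, Lemma~\ref{lem:UniformSFT} applied to $D = H_i$ shows that $X|H_i$ is SFT with window size $r$, its forbidden patterns being exactly the patterns with domain contained in $\ball_r \cap H_i$ that do not occur in $X$. Computing this finite set reduces to deciding, for each such pattern $P$, whether $P \sqsubset X$; here I use that $\lang(X|H_i)$ and $\lang(X)$ agree on patterns with domain in $H_i$. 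To decide $P \sqsubset X$ I translate $\dom(P)$ into a finite inductive interval $C_0$, obtaining a translate $P'$ with $\dom(P') \subseteq C_0$. Because $\qat$ uniformly defines $X$ on $\C$, a $C_0$-configuration is globally valid precisely when it is $\qat$-legal, so $P \sqsubset X$ if and only if some $\qat$-legal $z \in \Sigma^{C_0}$ satisfies $z|\dom(P') = P'$, which is a finite search.

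The main obstacle is this third step: turning global validity into a decidable local condition. The crucial leverage is that $\qat$ defines $X$ not only on $G$ but uniformly on all inductive intervals, so that membership of a pattern in $\lang(X)$ becomes a finite search once its domain is relocated inside a \emph{finite} inductive interval; the one technical point to verify is exactly this relocation, which relies on the one-sided ``positive-orthant'' inductive intervals being cofinal under left translation (any finite set can be pushed into one by multiplying by a deep positive element, handling the axes from the top down). Reusing the single avoradius $r$ for both $X$ on $G$ and $X|H_i$ on $H_i$ — justified by $H_i$ being $\C$-constructible — is what keeps the window sizes, and hence the computation, uniform.
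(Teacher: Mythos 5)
Your treatment of the first two assertions is correct and essentially identical to the paper's: the first claim is Theorem~\ref{thm:PolycyclicUniformSFT} combined with Lemma~\ref{lem:ComputingPatterns} (with extendability from Lemma~\ref{lem:IIWellOrder2}), and the second is the observation that inductive intervals of $H_i$ are inductive intervals of $G$ and follower sets are unchanged under restriction to $H_i$.

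The third assertion is where there is a genuine gap. Your plan is to list all patterns with domain in $\ball_r \cap H_i$ that do not occur in $X$, and to decide $P \sqsubset X$ by translating $\dom(P)$ into a \emph{finite} inductive interval $C_0$ and doing a finite search over $\qat$-legal elements of $\Sigma^{C_0}$. But for $n \geq 2$ there are essentially no finite inductive intervals: if any axis interval $I_j$ with $j \geq 2$ is nonempty (and $H_{j-1}$ is infinite), the inductive interval contains entire cosets of $H_{j-1}$ and is infinite. Concretely, in $\Z^2$ the only finite inductive intervals are subsets of the first axis, so a domain such as $\{(0,0),(0,1)\}$ admits no translate into one. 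The ``positive-orthant'' inductive intervals you invoke are indeed cofinal under translation, but they are infinite, so the search over $\Sigma^{C_0}$ is not finite; knowing that $\qat$ defines $X|C_0$ only gives you co-semidecidability of $P \sqsubset X$ by compactness, not decidability. Deciding the language genuinely requires the separate inductive argument of Lemma~\ref{lem:DecidableLanguage} (building patterns along a $\C$-well-order of $G$ using the avoradius), which the paper proves afterwards and does \emph{not} use for this theorem. The paper's own proof of the third claim is much shorter and sidesteps language computation entirely: $H_i$ is $\C$-constructible (it is well-ordered in the proof of Lemma~\ref{lem:IIWellOrder}), every prefix of that order is a translate of an inductive interval on which $\qat$-legality implies global validity, so the subfamily of $\qat$ consisting of those forbidden patterns admitting a translate into $H_i$ already defines $X|H_i$; extracting that subfamily is a trivially decidable syntactic operation on $\qat$. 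You should either switch to that argument or explicitly route your part three through Lemma~\ref{lem:DecidableLanguage} rather than the finite-interval relocation.
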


\begin{proof}
For the first claim, by Theorem~\ref{thm:PolycyclicUniformSFT}, $X$ is uniformly SFT on iterated intervals whenever $X$ is an II-avoshift. By Lemma~\ref{lem:ComputingPatterns}, we can find a set of forbidden patterns $\qat$ defining it uniformly on these sets, since by Lemma~\ref{lem:IIWellOrder2} the II are extendable (the other properties are obvious).

The second claim (that $X|H_i$ is an avoshift) is immediate from the definition.

For the third (decidability) claim, since each of the groups $H_i$ is an iterated interval, the forbidden patterns in $\qat$ that admit a translate contained in $H_i$ directly define $X|H_i$. Obviously it is decidable whether a forbidden pattern admits such a translate.
\end{proof}

\begin{corollary}
The set of II-avoshifts on a polycyclic group is recursively enumerable.
\end{corollary}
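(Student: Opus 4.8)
The plan is to reduce the statement to the semidecision procedure packaged in Theorem~\ref{thm:PolycyclicQat} (which rests on Lemma~\ref{lem:ComputingPatterns}), combined with the observation that, among SFTs on a polycyclic group, the II-avoshifts are exactly the subshifts that are uniformly SFT on the family $\C$ of inductive intervals. First I would fix a coding: by the corollary following Theorem~\ref{thm:PolycyclicUniformSFT} every II-avoshift is an SFT, hence is presented by a finite set of forbidden patterns, and the collection of all such finite pattern sets is itself recursively enumerable because a polycyclic group has decidable word problem, so its elements and the patterns over it can be enumerated. Thus ``the set of II-avoshifts is recursively enumerable'' is to be read as: we can enumerate finite pattern sets $\pat$ so that every listed $\pat$ defines an II-avoshift and every II-avoshift is defined by some listed $\pat$.

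The key equivalence I would establish is that, for an SFT $X$ on polycyclic $G$, the following coincide: $X$ is an II-avoshift; $X$ is uniformly $\C$-avo; and $X$ is uniformly SFT on $\C$. Indeed $\C$ is a wqo under inclusion (Lemma~\ref{lem:wqo}) and closed under increasing union (Lemma~\ref{lem:Union}), so $\C$-avo upgrades to uniform $\C$-avo by Lemma~\ref{lem:CommonRadius}; uniform $\C$-avo yields uniform $\C$-SFT by Theorem~\ref{thm:PolycyclicUniformSFT} (using constructibility, Lemma~\ref{lem:IIWellOrder}); and conversely uniform $\C$-SFT gives back uniform $\C$-avo by Lemma~\ref{lem:SFTimpliesAvo}, since inductive intervals form a good family. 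Each implication cites only results already proved, so this step is routine bookkeeping.

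With the equivalence in hand the enumeration is a dovetailing argument. I would enumerate all finite pattern sets $\pat$ and, for each, run the algorithm of Lemma~\ref{lem:ComputingPatterns} on the SFT it defines; that algorithm applies because $\C$ is topologically closed (Lemma~\ref{lem:Closed}), upper semicomputable (its intersections with balls are computable from the defining tuples of axis intervals), extendable (Lemma~\ref{lem:IIWellOrder2}), and contains $\emptyset$. Running all these searches in parallel, whenever the search attached to some $\pat$ outputs a witnessing family $\qat$, I emit $\pat$. Correctness is then immediate: by Lemma~\ref{lem:ComputingPatterns} the search for $\pat$ succeeds exactly when the SFT it defines is uniformly $\C$-SFT, which by the equivalence above is exactly when that SFT is an II-avoshift. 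Hence every emitted $\pat$ defines an II-avoshift (soundness), and every II-avoshift, being an SFT, is defined by some $\pat$ whose search eventually halts and is therefore emitted (completeness).

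The main obstacle is conceptual rather than computational: one must notice that the property being semidecided is a statement about subshifts, while the algorithm only manipulates finite pattern sets, so the whole argument is legitimate only because II-avoshifts are forced to be SFTs and hence finitely presentable. The one-sidedness of Lemma~\ref{lem:ComputingPatterns}, which can confirm but never refute the uniform $\C$-SFT property, is precisely what delivers recursive enumerability rather than outright decidability; and it is the reverse implication ``uniform $\C$-SFT $\Rightarrow$ II-avoshift'', obtained through the goodness of $\C$, that makes the halting condition of each search coincide exactly with membership in the class.
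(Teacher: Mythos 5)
Your proposal is correct and follows essentially the same route as the paper: enumerate finite pattern sets, run the search of Lemma~\ref{lem:ComputingPatterns} (via Theorem~\ref{thm:PolycyclicQat}) for a uniformly $\C$-defining family $\qat$, and emit on success, with soundness from goodness of the inductive intervals plus Lemma~\ref{lem:SFTimpliesAvo} and completeness from Theorem~\ref{thm:PolycyclicUniformSFT}. The paper's proof is just a terser statement of the same argument, leaving the equivalence you spell out implicit.
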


\begin{proof}
For each finite set of forbidden patterns defining an SFT $X$, we try to compute forbidden patterns that define $X$ uniformly on all inductive intervals. If we find such, then $X$ is indeed uniformly SFT on the inductive intervals. Since the inductive intervals are a good family, Lemma~\ref{lem:SFTimpliesAvo} implies that $X$ is (uniformly) avo on inductive intervals.
\end{proof}

Next, we proceed to show that we can compute the language of an avoshift under suitable assumptions.

\begin{lemma}
\label{lem:DecidableLanguage}
Let $G$ be a group with decidable word problem, let $\C$ be a topologically closed, computable family of sets, and suppose that $G$ is $\C$-constructible. Then there is an algorithm that, given a finite set of forbidden patterns $\pat$ such that $\pat$ defines an SFT $X$ $\C$-uniformly, decides the language of the SFT $X$.
\end{lemma}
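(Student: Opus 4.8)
The plan is to decide the language of $X$ by reducing each query ``$P \sqsubset X$?'' to a finite, effective search that uses the $\C$-constructibility of $G$ together with the $\C$-uniform SFT structure. First I would observe that because $\pat$ defines $X$ $\C$-uniformly and $G$ itself is $\C$-constructible, Lemma~\ref{lem:UniformSFT} already tells us (once we extract an avoradius) that $X = X|G$ is defined by $\pat$ as an honest SFT on all of $G$; the content of the present lemma is therefore \emph{effectivity}, i.e.\ that we can actually certify both membership and non-membership of a pattern in $\lang(X)$.

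The key asymmetry to exploit is that the two directions of ``$P \in \lang(X)$'' are semidecidable for different reasons. On one side, \emph{non}-membership is easy: since $\C$ is computable and $G$ is $\C$-constructible, fix a $\C$-well-ordering of $G$ and enumerate its prefixes $\downarrow s$; each translated downset $s^{-1}\downarrow s$ lies in $\C$, so $\pat$ defines each $X|\downarrow s$ as an SFT with the common window size $r$. To test whether $P$ (with finite domain $D$) extends to a point of $X$, I would run the constructive procedure from the proof of Lemma~\ref{lem:UniformSFT}: process the elements of $G$ in the well-order, maintaining at each finite stage the set of all $\qat$-legal partial configurations on the current prefix that agree with $P$ on $D$. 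At a successor step with predecessor $t$, the determining set $B = \ball_r \cap t^{-1}C$ tells us exactly which symbols legally extend to $t$, and this is computable because $\C$ is computable (we can find the relevant downset shape) and $\pat$ is finite. If at some finite stage \emph{no} partial configuration survives, then $P \notin \lang(X)$, and this is detected in finite time.

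For the membership side, I would run a complementary search that simply enumerates globally valid patterns by extending $P$ outward and checking $\qat$-legality on larger and larger balls; by the uniform SFT property a $\qat$-legal configuration on a sufficiently large prefix is globally legal, so if $P \in \lang(X)$ this search eventually certifies it. Running both semi-procedures in parallel (dovetailing) on the input $P$ yields an algorithm that always halts with the correct answer: exactly one of the two searches must succeed, since either $P$ has a globally valid extension or the constructible exhaustion terminates with an empty surviving set. Because the word problem of $G$ is decidable, all the bookkeeping—computing downsets, comparing elements in the well-order, and checking pattern occurrences—is effective.

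The main obstacle I anticipate is controlling the search along a well-order of type up to $\omega^n$ (or worse) so that the non-membership branch genuinely halts in \emph{finite} time rather than merely in the limit. The resolution is that one never needs to traverse the whole well-order: by the uniform avoradius $r$, the surviving-set computation at each successor step depends only on the bounded window $\ball_r$, so the set of ``surviving local states'' is a finite object evolving over a finitely-branching process, and non-membership manifests as a dead end reachable within a bounded prefix determined by $r$, $|D|$, and the (computable) local geometry of $\C$. Making this finiteness precise—identifying a computable bound $R$ after which no new obstruction can arise, analogous to the $R$ extracted in the lemmas preceding Lemma~\ref{lem:ComputingPatterns}—is the step that requires the most care, but it follows the same compactness-plus-upper-semicomputability pattern already used there.
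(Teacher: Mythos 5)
There is a genuine gap, and it sits in your membership branch --- which is unfortunately the entire content of the lemma. You have the asymmetry backwards: non-membership $P \not\sqsubset X$ is the easy, standard direction for \emph{any} SFT on a group with decidable word problem (by compactness, $P \not\sqsubset X$ iff some finite superset of its domain admits no locally $\pat$-legal extension of $P$, and one searches over balls); no well-order or $\C$-structure is needed for it. As a side remark, your version of this branch, which maintains surviving configurations along prefixes of the $\C$-well-order, does not obviously terminate, since for a well-order of type exceeding $\omega$ the finite stages never reach cells beyond the first $\omega$-block and hence can miss the obstruction --- but that is repaired by searching over balls instead. The unrepaired problem is the positive direction. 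Your certificate for $P \sqsubset X$ is to find $\pat$-legal extensions of $P$ on ``larger and larger balls,'' justified by the claim that ``a $\pat$-legal configuration on a sufficiently large prefix is globally legal.'' That claim is true for \emph{prefixes} of the $\C$-well-order, because each translated downset lies in $\C$ and $\pat$ defines $X$ there --- but those prefixes are in general infinite sets (e.g.\ on $\Z^2$ the first nontrivial downsets already contain an entire coset of $\Z\times\{0\}$), whereas balls are finite and are not translated elements of $\C$. Local $\pat$-legality on a finite ball does not certify global legality: the given $\pat$ need not contain every globally illegal window-sized pattern, so the cell-by-cell bootstrapping breaks down the moment it needs to know that a window-sized subpattern is \emph{globally} legal. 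Your proposed rescue via ``a computable bound $R$'' fails for the same reason the paper is careful here: the $R$ in the auxiliary lemma before Lemma~\ref{lem:ComputingPatterns} comes from a compactness argument and is not computable from $\pat$.

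The mechanism you are missing is the paper's incremental certification of the \emph{exact} language on finite shapes. Root the process at the empty pattern (globally legal iff the empty pattern is not in $\pat$), and record that exact knowledge is preserved under translation and under passing to subsets. The key deduction rule is: if the algorithm already knows the exact set of globally legal patterns on $C \cap \ball_R$ for some $C \in \C$, with $R$ larger than the diameter of any pattern in $\pat$, then the exact set of globally legal patterns on $(C \cap \ball_R) \cup \{e_G\}$ is precisely the set of locally $\pat$-legal one-point extensions of the known patterns. Soundness of the nontrivial inclusion uses that $\pat$ defines $X|(C \cup \{e_G\})$: glue the new symbol onto a global extension $x \in X|C$ of the known pattern; any forbidden pattern in the result would have to meet $e_G$ and hence lie inside $\ball_R$, where the configuration is locally legal by hypothesis. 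Transfinite induction along the $\C$-well-order of $G$ then shows that every finite shape is eventually certified exactly, which is the positive semidecision procedure your dovetailing needs. Without it, your algorithm need not halt on patterns that \emph{are} in the language.
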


This is a promise problem, i.e.\ we don't care what the algorithm does if $\pat$ does not have the property stated. (But we will only apply this when we actually know $\pat$ has this property.)

\begin{proof}
Our algorithm computes better and better upper approximations to the set of patterns of $X$ on finite subsets of $G$ using upper semicomputability of the language.

At times, it will declare that it has found the precise set of patterns in the language on a particular domain. We will be sure to only add such deduction rules when they are actually safe (assuming the given property of $\pat$), i.e.\ when the algorithm declares it knows the patterns of a particular shape, this can be trusted to be actually true.

To root the process, the algorithm can check whether the empty pattern is in $\pat$. If it is, then $X$ is empty and we are done. Otherwise, it concludes that it knows the exact set of globally legal patterns on the domain $\emptyset$ already (i.e.\ the empty pattern appears in the language). Also, if the algorithm has already deduced the globally legal patterns on a domain $D$, then it knows the globally legal patterns on any $gD$, and on any subset of $D$.

The crucial point is that if we know the exact patterns on $C \cap \ball_R \setminus \{e_G\}$ for $C \in \C$, where $R$ is larger than the diameter of any domain of a pattern in $S$, then we also know the $(C \cap \ann_R) \cup \{e_G\}$-patterns. This is because a pattern on $\dot C = C \cup \{e_G\}$ is in $X$ if and only if it does not contain a $\pat$-pattern, so if we know the contents of a $C$-pattern on the annulus $\ball_R \setminus \{e_G\}$, then since patterns in $\pat$ cannot see over the annulus, the algorithm just needs to list the possible extensions that do not directly introduce a pattern from $\pat$, and these will give exactly the globally legal patterns on the extended shape.

We can now prove by transfinite induction on $s \in G$, w.r.t.\ the $\C$-well-order of $G$, that the algorithm eventually knows all $D$-shaped patterns for finite sets $D$ whose elements are strictly below $s$, and all their translates. (Stated like this, if the order on $G$ has a maximum $s$, then we do not actually deduce the finite patterns containing $s$ directly, but any pattern can be shifted so as not to include the maximum.) For limit ordinals $s$, the induction step is trivial. For successor ordinals, suppose $t \prec s$ is the predecessor and let $D \Subset C = \{g \colo g < t\}$ be any finite set. It suffices to show that the algorithm eventually deduces the correct patterns on $D \cup \{t\}$.

By induction, we eventually know the exact set of patterns on $\ball_R \cap t^{-1} C$. If $R$ is larger than the diameter of any pattern in $\pat$ and large enough so that $t\ball_r \supset D$, then we note that by the main deduction rule, the algorithm eventually deduces the set of patterns on $(\ball_R \cap t^{-1} C) \cup \{e_G\}$. After a shift, it deduces them on $(C \cap t\ball_R) \cup \{t\}$, and then also the subset $D \cup \{t\}$.
\end{proof}

\begin{lemma}
\label{lem:InclusionSafelyDecidable}
Let $G$ be a finitely-generated group with decidable word problem. Let $A$ be any algorithm that, given an SFT $X$, safely enumerates the language of every $X$ if $X$ belongs to a class of subshifts $\mathcal{S}$. Then given SFTs $X, Y \subset \Sigma^G$, the inclusion $X \subset Y$ is safely decidable for pairs $(X, Y)$ such that $X \in \mathcal{S}$.
\end{lemma}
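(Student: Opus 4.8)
The plan is to reduce the inclusion $X \subset Y$ to finitely many language-membership questions about $X$, and then to answer each such question by running two opposite semidecision procedures in parallel. Since $Y$ is an SFT, it is given by a finite set of forbidden patterns, say $F_1, \dots, F_k$. A point lies in $Y$ precisely when none of the $F_j$ occurs in it, so $X \subset Y$ holds if and only if no $F_j$ occurs in any point of $X$, that is, if and only if $F_j \notin \lang(X)$ for every $j$. Thus it suffices to decide, for each of the finitely many $F_j$, whether $F_j \in \lang(X)$.

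For a single pattern $F$ I would set up two semidecisions. On one side, to semidecide $F \in \lang(X)$, I run the given algorithm $A$ on $X$ and wait for it to output (a translate of) $F$. Because $A$ safely enumerates the language, any pattern it outputs genuinely belongs to $\lang(X)$, so detecting $F$ this way is correct for every SFT $X$; and because $A$ enumerates all of $\lang(X)$ whenever $X \in \mathcal{S}$, this procedure halts on the promised class exactly when $F \in \lang(X)$. On the other side, to semidecide $F \notin \lang(X)$, I use that the complement of the language of an SFT is recursively enumerable: by compactness, $F \notin \lang(X)$ iff there is a finite region (e.g.\ a ball $\ball_n$ containing the domain of $F$) on which no $X$-locally-valid pattern agrees with $F$. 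Enumerating balls of increasing radius and checking the finitely many candidate extensions for local validity (which is decidable since $G$ has decidable word problem and $X$ has finitely many forbidden patterns) semidecides $F \notin \lang(X)$, and this procedure is sound for every SFT $X$ and halts exactly when $F \notin \lang(X)$.

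Running both procedures in parallel for a fixed $F$ therefore yields a test that never lies and that terminates whenever $X \in \mathcal{S}$, since in that case exactly one of $F \in \lang(X)$, $F \notin \lang(X)$ is detected. The full inclusion algorithm runs these combined tests for all $F_1, \dots, F_k$ simultaneously: it outputs ``no'' as soon as some $F_j$ is confirmed to lie in $\lang(X)$ (witnessing a point of $X$ outside $Y$), and it outputs ``yes'' once every $F_j$ has been confirmed absent from $\lang(X)$. Both outputs are backed by genuine facts that do not rely on the promise, which gives safety; and under the promise $X \in \mathcal{S}$ each $F_j$'s status is eventually resolved, so the algorithm halts with the correct verdict, giving completeness on the class $\{(X,Y) : X \in \mathcal{S}\}$.

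The main obstacle, and the reason the hypothesis on $A$ is needed, is that $F \in \lang(X)$ is in general \emph{not} semidecidable for an arbitrary SFT $X$: there is no finite certificate that a pattern extends to a full point. The one-sided semidecision of non-membership is always available from the SFT structure, but the matching membership side is precisely what $A$ supplies on $\mathcal{S}$, and the whole argument hinges on combining these two one-sided tests into a procedure that terminates under the promise while remaining safe in general.
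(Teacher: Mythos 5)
Your proof is correct and takes essentially the same approach as the paper: dovetail a semidecision of $X \subset Y$ (sound for all SFTs, via compactness and the decidable word problem) with a semidecision of $X \not\subset Y$ obtained from the safe enumeration $A$, so that the answer is always trustworthy and termination is guaranteed under the promise $X \in \mathcal{S}$. The only cosmetic difference is that you organize both tests around the finitely many forbidden patterns of $Y$, whereas the paper searches the enumerated language of $X$ for any pattern not in $\lang(Y)$ and cites semidecidability of SFT inclusion as a known fact.
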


\begin{proof}
The inclusion $X \subset Y$ is semidecidable. If $X \not\subset Y$, then there exists a pattern $P \sqsubset X$ such that $P \not\sqsubset Y$. The latter is semidecidable. The former is semidecidable if $X \in S$, because eventually the algorithm $A$ outputs the pattern $P$. Since $A$ outputs the language of $X$ safely, if it outputs $P$ then indeed $P \sqsubset X$, so if the deduction $X \not\subset Y$ is made, it is correct even if we do not necessarily have $X \in S$, so $X \subset Y$ is indeed decided safely.
\end{proof}


While the previous lemma is not commonly stated as we have here, it is well-known and commonly applied in the case that $\mathcal{S}$ is the class of subshifts where periodic points are dense. Namely, Wang's algorithm \cite{Wa61,SaTo23} is more or less the proof of precisely this result. Note that here the idea of ``safety'' is quite useful -- Wang's algorithm naturally allows use to conclude $X \not\subset Y$ for also many $X$ where periodic points are not dense, since the algorithm that enumerates the subpatterns of periodic points indeed safely enumerates the language of a given SFT when it has dense periodic points.

\begin{theorem}
The language of an II-avoshift on a polycyclic group is safely computable, uniformly in the SFT. In particular, given two SFTs on a polycyclic group $X, Y$, such that $X$ is an II-avoshift, we can safely decide the inclusion $X \subset Y$.
\end{theorem}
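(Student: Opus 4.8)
The plan is to obtain the language computation by chaining the two algorithmic results already established, and then to derive inclusion from the generic reduction of Lemma~\ref{lem:InclusionSafelyDecidable}. Concretely, given an SFT $X$ presented by a finite set of forbidden patterns, I would first run the safe procedure of Theorem~\ref{thm:PolycyclicQat}: it either never halts or outputs a finite set $\qat$ that genuinely defines $X$ uniformly on all inductive intervals, and by that theorem it does halt with such a $\qat$ whenever $X$ is an II-avoshift. Feeding this $\qat$ into the algorithm of Lemma~\ref{lem:DecidableLanguage} then decides the language of $X$. Since the first stage is safe (it only ever emits a $\qat$ that truly defines $X$ uniformly on the inductive intervals) and the second stage is correct precisely under that promise, the composite never reports a pattern outside the language, and it terminates with the full language exactly when $X$ is II-avo. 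This yields safe computability of the language, uniformly in the input SFT.

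The only thing to verify is that the family $\C$ of inductive intervals meets the hypotheses of Lemma~\ref{lem:DecidableLanguage}. Polycyclic groups have decidable word problem, $\C$ is topologically closed by Lemma~\ref{lem:Closed}, and $G$ is $\C$-constructible by Lemma~\ref{lem:IIWellOrder}. It remains to note that $\C$ is \emph{computable} in the required sense. Upper semicomputability is immediate from the concrete description of inductive intervals by tuples of $0$-grazing axis intervals (and is in any case already used in Theorem~\ref{thm:PolycyclicQat}). For lower semicomputability one must compute the set of finite subsets of members of $\C$: given a finite $F \Subset G$, one computes the coordinate tuple of each element, extracts on each axis the last nonzero coordinate values occurring, and checks that on every axis these values all have one sign; $F$ embeds in some inductive interval if and only if this holds, which is decidable. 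Hence $\C$ is computable and Lemma~\ref{lem:DecidableLanguage} applies.

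For the inclusion statement I would invoke Lemma~\ref{lem:InclusionSafelyDecidable} with $\mathcal{S}$ taken to be the class of II-avoshifts. The algorithm just constructed safely enumerates (indeed decides) the language of $X$ whenever $X \in \mathcal{S}$, which is exactly the hypothesis of that lemma on the enumerating algorithm $A$. We therefore conclude that $X \subset Y$ is safely decidable for every pair of SFTs $(X,Y)$ on $G$ with $X$ an II-avoshift.

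I expect no genuine obstacle: the substantive work lives in the preceding sections, and the present statement is essentially a bookkeeping assembly of Theorem~\ref{thm:PolycyclicQat}, Lemma~\ref{lem:DecidableLanguage}, and Lemma~\ref{lem:InclusionSafelyDecidable}. The one point deserving care is the \emph{safety} of the composition: one must confirm that threading the merely conditionally halting search for $\qat$ into the language decider never produces a spurious pattern even when $X$ fails to be an II-avoshift. This holds because each subroutine is individually safe under the stated promises, so any pattern the composite declares to be in the language is vouched for by a correct $\qat$.
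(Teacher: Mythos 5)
Your proposal is correct and follows essentially the same route as the paper: the paper's proof is exactly the chain Theorem~\ref{thm:PolycyclicQat} $\to$ Lemma~\ref{lem:DecidableLanguage} $\to$ Lemma~\ref{lem:InclusionSafelyDecidable}. Your additional verification that the inductive intervals are computable and that $G$ is constructible is a reasonable elaboration of hypotheses the paper leaves implicit at this point.
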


\begin{proof}
By Theorem~\ref{thm:PolycyclicQat} we can safely find a set of forbidden patterns defining $X$ uniformly on the IIs. Then we can safely compute the language by Lemma~\ref{lem:DecidableLanguage}. The previous lemma concludes the proof.
\end{proof}

We recall that even in the case $G = \Z$, the previous theorem covers some cases Wang's algorithm does not, as it does not require dense periodic points. (Similarly, Wang's algorithm covers many cases ours does not.)

The following is an easy exercise in automata theory, but it may be of some interest that the avoshift technology covers this.

\begin{theorem}
The language of an SFT on the free group is decidable. In particular, given two SFTs on a free group $X, Y$, we can decide the inclusion $X \subset Y$.
\end{theorem}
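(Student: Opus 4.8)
The plan is to instantiate the general decidability machinery of Section~\ref{sec:Decidability} with the free group $G$ and the family $\C$ of extension sets of limit tree convex sets, leaning on Theorem~\ref{thm:FreeGroupSFTUniformlyAvo} to guarantee that \emph{every} SFT lands in the nice subclass, so that ``safe'' decidability upgrades to outright decidability. Free groups have decidable word problem (reduced words give a normal form), so this standing hypothesis of the decidability lemmas is met.

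First I would record the relevant properties of $\C$. It is topologically closed: by the lemma identifying limit tree convex sets with Cantor limits of tree convex sets, together with the fact that the extension condition $C \cup \{e\} \in \C$ is itself closed, $\C$ is closed in $\pow(G)$. It is computable: a finite set can be tested for tree convexity directly from the geodesic/ball condition, and since $C \cap \ball_r$ is tree convex whenever $C$ is limit tree convex, the $r$-truncations of members of $\C$ coincide with $r$-truncations of finite tree convex extension sets; hence we can both enumerate finite subsets and compute $\{C \cap \ball_r \colo C \in \C\}$. Finally, the finite tree convex sets form a convexoid \cite{Sa22d}; its corner addition property furnishes anti-shellings which show, exactly as in Lemma~\ref{lem:ConvexoidComputable}, that $\C$ is constructible (so $G$ and each $C \in \C$ admit $\C$-well-orders) and extendable. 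The downsets arising in these orders are finite, hence finite tree convex extension sets, and these lie in the larger family $\C$ as well, so no incompatibility arises.

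Next I would produce, algorithmically, forbidden patterns defining $X$ uniformly on $\C$. By Theorem~\ref{thm:FreeGroupSFTUniformlyAvo} the SFT $X$ is uniformly $\C$-avo; since every $C \in \C$ is $\C$-constructible, Lemma~\ref{lem:UniformSFT} shows $X$ is uniformly SFT on $\C$, so a finite defining family $\qat$ exists. Because $\C \ni \emptyset$ is topologically closed, upper semicomputable, and extendable, Lemma~\ref{lem:ComputingPatterns} gives an algorithm that, from any initial set of forbidden patterns for $X$, enumerates all such $\qat$; the existence just established guarantees it halts with one. Feeding this $\qat$ and the computability and constructibility of $\C$ into Lemma~\ref{lem:DecidableLanguage} yields an algorithm deciding $\lang(X)$. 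As this works for \emph{every} SFT on $G$, the language is decidable uniformly in $X$.

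The inclusion statement then follows from Lemma~\ref{lem:InclusionSafelyDecidable} with $\mathcal{S}$ taken to be the class of \emph{all} SFTs on $G$: the language of $X$ is (safely) enumerable by the previous paragraph, and $P \not\sqsubset Y$ is semidecidable for the SFT $Y$, so $X \subset Y$ is decided by simultaneously searching for a pattern witnessing $X \not\subset Y$ and for a certificate of inclusion. Since $\mathcal{S}$ is all SFTs, ``safely decidable on $\mathcal{S}$'' is simply decidability. The only genuine subtlety is the bookkeeping of the second paragraph: two distinct families are in play — finite tree convex sets, which carry the convexoid/constructibility/extendability structure, and their limit versions, which supply topological closure — and the step I expect to demand the most care is verifying that these are compatible enough for the hypotheses of Lemmas~\ref{lem:ComputingPatterns} and~\ref{lem:DecidableLanguage} to hold for a single family $\C$ at once.
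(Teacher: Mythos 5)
Your proposal follows essentially the same route as the paper: Theorem~\ref{thm:FreeGroupSFTUniformlyAvo} puts every free-group SFT in the uniformly avo class for extension sets of limit tree convex sets, then Lemma~\ref{lem:UniformSFT}, Lemma~\ref{lem:ComputingPatterns}, Lemma~\ref{lem:DecidableLanguage}, and Lemma~\ref{lem:InclusionSafelyDecidable} are chained in exactly the same order, with the convexoid structure of tree convex sets supplying constructibility and extendability. The compatibility check between the finite and limit versions of the family that you flag as the delicate step is precisely the point the paper also addresses (closure and upper semicomputability come from the limit family, extendability from the convexoid anti-shellings), so your argument is correct and matches the paper's.
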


\begin{proof}
As we showed in Section~\ref{sec:Examples}, any SFT on the free group is uniformly avo on the extension set $\D$ of the limit tree convex sets $\C$. Lemma~\ref{lem:UniformSFT} says that if $X \subset A^G$ is a subshift that is uniformly $\D$-avo for $\D \subset \pow(G \setminus \{e_G\})$ with avoradius $r$, then whenever $D$ is $\D$-constructible, $X|D$ is SFT with window size $r$. The set $\D$ is constructible since $\D$ is a convexoid by Lemma~\ref{lem:ConvexoidConstructible}. 

Next, Lemma~\ref{lem:ComputingPatterns} says that if $\D \ni \emptyset$ is a topologically closed, upper semicomputable, extendable family of shapes, then there is an algorithm that, given the finite set of patterns $\pat$ defining an SFT $X$ on $G$, enumerates all finite families of forbidden patterns $\qat$ that uniformly define the $\C$-restrictions of $X$. Topological closure and upper semicomputability are clear for $\D$ from the corresponding properties of $\C$ (which are clear from the definition). Extendability follows from the proof of constructibility in Lemma~\ref{lem:ConvexoidConstructible} since we can take $C_i = C$ for some $i$ in the proof. We conclude that such a set $\qat$ exists, and thus the algorithm eventually finds one.

Next, Lemma~\ref{lem:DecidableLanguage} says that if $\D$ be a topologically closed, computable family of sets, and $G$ is $\D$-constructible, then there is an algorithm that, from such a set $\qat$, decides the language of the SFT $X$. Since the family $\qat$ is guaranteed to indeed define $X$ uniformly on $\D$, the language is decided safely for all SFTs.

Lemma~\ref{lem:InclusionSafelyDecidable} then implies that inclusion of SFTs is safely decidable for all SFTs, in other words the problem is simply decidable.
\end{proof}

\section{Avofactors}
\label{sec:Avofactors}

We slightly generalize the avoshift concept, to give an avoshift proof for the useful fact that algebraic factors of group shifts can be computed \cite{BeKa24}. We give only the outline, as this is analogous to the theory developed above.

Let $S$ be any set, and $\C \subset \pow(S) \times S$ any set of \emph{cornered subsets} meaning $(C, c) \in \C \implies c \notin C$. Then we say $X \subset \Sigma^S$ is $\C$-avo if for any $(C, c) \in \C$, there is a finite subset of $B \Subset C$ such that $\follow_X(x|C, c) = \follow_X(x|B, c)$ for all $x \in X$. In the case of subshifts, we could always take $c = e_G$ and always translate the cell to be determined to the origin, so we did not discuss corners explicitly.

Now, if we have a family of subsets $\C$ on a group $G$, then we can extend it to a family of cornered shapes $\D$ on $G \times L$ where $L = \{1, \ldots, \ell\}$, by taking the shapes $(G \times \{j+1, \ldots, \ell\}) \cup (C \times \{j\})$ with corner $(e_G, j)$. This corresponds roughly to the inductive intervals on the group $G \times \Z_{\ell}$, which on the last axis graze the origin from the right, but now $\{1, \ldots, \ell\}$ is not a group but an interval. Now we define an \emph{avoshift relation} to be a subset $X \subset \Sigma^{G \times \{1, \ldots, \ell\}}$ which is closed, $G$-invariant, and is avo for the cornered set described in this paragraph. The construction order one should imagine is that we build the ``cosets'' $G \times \{i\}$ for decreasing $i$, and the possible constructions in each coset come from $\C$.

In the case that $\C$ is the set of inductive intervals on a polycyclic group, one can repeat the arguments of the previous sections to conclude that if $X \subset \Sigma^{G \times L}$ is $G$-invariant and a $\D$-avoshift, then we can compute a set of $G$-invariant finite patterns on $G \times L$ that define $X$ uniformly on all sets in $\D$.

\begin{definition}
Let $G$ be a polycyclic group with fixed polycycle structure. Given an avoshift $X \subset \Sigma^G$, we say $Y \subset \Sigma^G$ is an avofactor of $X$ if there is a factor map $f : X \to Y$ such that the graph $Z \subset X \times Y \subset \Sigma^{G \times \{1, 2\}}$ of $f$, defined by $(x, y) \in Z \iff f(x) = y$ is an avorelation.
\end{definition}

Note that we assume the same alphabet just for notational convenience; if they are not the same, we can consider both with the union alphabet. It is important that the image is written on the second component, so that the ``image of $f$ is constructed before the preimage'' when we consider $\D$-well-orderings. The decidability result below effectively comes from the fact that we find local rules for building preimages for any image.

Now we can show the following analogously to Theorem~\ref{thm:PolycyclicQat}.

\begin{theorem}
Every avofactor of an II-avoshift on a polycyclic group is an avoshift, and we can effectively compute its forbidden patterns.
\end{theorem}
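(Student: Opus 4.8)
The plan is to mirror the structure of Theorem~\ref{thm:PolycyclicQat}, transported from the setting of ordinary avoshifts on inductive intervals to the setting of avorelations on the cornered family $\D$ built from inductive intervals on $G \times L$ with $L = \{1,2\}$. Let $X \subset \Sigma^G$ be an II-avoshift and $Y$ an avofactor via $f : X \to Y$, with graph $Z \subset \Sigma^{G \times \{1,2\}}$ an avorelation. First I would record the structural inputs: since $X$ is an II-avoshift, by the corollary to Theorem~\ref{thm:PolycyclicUniformSFT} it is SFT, and by Theorem~\ref{thm:PolycyclicQat} we can safely compute forbidden patterns defining $X$ uniformly on all inductive intervals. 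The goal is to run the analogous machinery for $Z$: establish that $Z$ is uniformly $\D$-avo, that the relevant shapes in $\D$ are a good, wqo, increasing-union-closed, constructible and extendable family (inherited from the inductive intervals via the product construction described in this section), and then invoke the analogues of Lemma~\ref{lem:CommonRadius}, Lemma~\ref{lem:UniformSFT}, and Lemma~\ref{lem:ComputingPatterns} to obtain and compute a finite set of $G$-invariant forbidden patterns defining $Z$ uniformly on $\D$.

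The key steps, in order, would be the following. Step one: verify that the cornered family $\D$ on $G \times \{1,2\}$ inherits the order-theoretic properties needed. The shapes are $(G \times \{2\}) \cup (C \times \{1\})$ with corner $(e_G,1)$, together with the shapes $C \times \{2\}$ with corner $(e_G,2)$ coming from constructing the second coset first; these are, as noted in the text, essentially the inductive intervals on $G \times \Z_2$ restricted to graze from the appropriate side, so wqo, closure under increasing union, goodness, constructibility and extendability all transfer from Lemmas~\ref{lem:wqo}, \ref{lem:Union}, \ref{lem:IIWellOrder}, and \ref{lem:IIWellOrder2}. Step two: from the avorelation hypothesis and the wqo/increasing-union properties, apply the analogue of Lemma~\ref{lem:CommonRadius} to upgrade $Z$ from $\D$-avo to uniformly $\D$-avo with a common avoradius $r$. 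Step three: apply the analogue of Lemma~\ref{lem:UniformSFT} to the $\D$-constructible set $G \times \{1,2\}$ to conclude that $Z$ is uniformly SFT on $\D$ with window size $r$. Step four: apply the analogue of Lemma~\ref{lem:ComputingPatterns} (using topological closure, upper semicomputability, and extendability of $\D$, all inherited from the inductive intervals) to \emph{safely compute} a finite set of $G$-invariant forbidden patterns $\qat$ defining $Z$ uniformly on $\D$.

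Step five extracts $Y$ and its forbidden patterns from $\qat$. Because $Y = Z|_{G \times \{2\}}$ (the projection of the graph to the image coordinate) and the $\D$-well-ordering is arranged so that the coset $G \times \{2\}$ is constructed \emph{before} $G \times \{1\}$, the subgroup $G \times \{2\}$ is itself one of the constructible shapes handled by the uniform SFT conclusion; hence those patterns in $\qat$ whose domain admits a translate into $G \times \{2\}$ directly define $Y$ as an SFT, exactly as $X|H_i$ was read off in Theorem~\ref{thm:PolycyclicQat}. This simultaneously shows $Y$ is SFT and gives its computable forbidden patterns. Finally, to see that $Y$ is itself an avoshift (not merely SFT), one checks that the avorelation property of $Z$, restricted to the image coordinate, yields the avo property for $Y$ on inductive intervals: a determining set for a shape of $Z$ in $\D$ projects to a determining set for the corresponding inductive interval of $Y$, since follower sets in $Y$ are images of follower sets in $Z$.

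The main obstacle I expect is Step five, specifically the claim that $Y$ is an avoshift and that the ordering genuinely lets one read its forbidden patterns off $\qat$. The subtlety is that the avorelation property controls extensions of partial configurations of the \emph{pair} $(x,y)$ to a new cell, and one must verify that the induced control on the image coordinate alone is enough for the avo property of $Y$ on inductive intervals in $G$ — this is where the ordering convention (image coset before preimage coset) is essential, since it guarantees that, when building the $G \times \{2\}$ layer, the relevant downsets lie entirely within that layer and thus correspond to genuine inductive intervals of $Y$. Confirming that the bounded determining sets descend correctly to the quotient of follower sets under the factor map, rather than becoming unbounded, is the crux; the remaining steps are faithful transcriptions of the polycyclic arguments already established.
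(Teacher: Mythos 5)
Your proposal follows essentially the same route as the paper's (sketched) proof: transport the wqo/increasing-union/constructibility/extendability machinery to the cornered family $\D$ on $G \times \{1,2\}$, upgrade to uniform avo, conclude uniform SFT, compute the defining patterns, and read off the image layer $G \times \{2\}$. Your worry about follower sets ``descending to a quotient'' in Step five is unnecessary: since the shapes $C \times \{2\}$ lie entirely in the image layer and $f$ is surjective, the follower sets of $Z$ on those shapes are literally the follower sets of $Y$, which is exactly how the paper disposes of the first claim.
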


\begin{proof}[Sketch of proof]
Let $\C$ be the inductive intervals of the polycyclic group $G$. Consider the subshift $Z \subset X \times Y \subset \Sigma^{G \times \{1, 2\}}$, assumed avo. Note that $Y$ must be an avoshift, since $C \times \{2\} \in \D$ for all $C \in \C$, proving the first claim.

Next, we go through the entire theory with $\D$ in place of inductive intervals. We can easily show that $\D$ is wqo under inclusion and closed under increasing union. Now the proof of Lemma~\ref{lem:CommonRadius} shows that any $\D$-avoshift $X$ is uniformly $\D$-avo.

Next, analogously to Lemma~\ref{lem:UniformSFT} we see that $X|D$ is uniformly SFT for $D \in \D$, i.e.\ there is a set of forbidden patterns for it, by showing that sets in $\D$ are constructible in an appropriate sense.

Since $\D$ are also extendable and are computable in any reasonable sense, we see as in Section~\ref{sec:Decidability} that we can compute a set of forbidden patterns defining $X$ uniformly on $D \in \D$. In particular these patterns work on $G \times \{2\} \in \D$, which means precisely that they give forbidden patterns for the image subshift.
\end{proof}
 
\begin{corollary}
We can effectively compute the algebraic factors of any group shift on a polycyclic group.
\end{corollary}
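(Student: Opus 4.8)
The plan is to recognize every algebraic factor as an avofactor in the sense of the definition above, and then quote the preceding theorem, which computes forbidden patterns for avofactors of II-avoshifts. So the only real work is to verify its two hypotheses in the algebraic setting: that a group shift is an II-avoshift, and that the graph of an algebraic factor map is an avorelation.

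First I would note that the preceding theorem applies to group shifts at all: by Lemma~\ref{lem:Quasigroup} a group shift $X \subset \Sigma^G$ has equal extension counts for every $C \subset G \setminus \{e_G\}$, hence is avo for each such $C$, so in particular it is an II-avoshift. Next, given an algebraic factor map $f : X \to Y$ (a shift-commuting continuous surjective homomorphism onto the group shift $Y = f(X)$), I would form its graph $Z = \{(x, f(x)) : x \in X\}$, viewed inside $\Sigma^{G \times \{1,2\}}$ with $X$ on the coset $G \times \{1\}$ and $Y$ on $G \times \{2\}$. Because $f$ is a homomorphism, $Z$ is a subgroup of the product group shift $X \times Y$ and hence is itself a group shift under the cellwise product operations, as well as $G$-invariant and closed. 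The key step is then to observe that the equal-extension-count computation of Lemma~\ref{lem:Quasigroup} goes through verbatim for cornered shapes: it uses only that the configurations on a shape form a group under a cellwise operation and that multiplication by a fixed element is injective, neither of which cares whether the corner lies in a group or merely in the set $G \times \{1,2\}$. Thus $Z$ has equal extension counts at each corner $(e_G, j)$ of each cornered shape in the family $\D$ associated to the inductive intervals $\C$, so $Z$ is avo for $\D$, i.e.\ it is an avorelation. This makes $Y$ an avofactor of $X$, and the preceding theorem then yields its forbidden patterns.

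To make the computation fully explicit, I would feed the algorithm of the preceding theorem a genuine SFT presentation of $Z$. Since $Z$ is a group shift on the polycyclic group $G$, it is an SFT by the SFTness corollary above, and a finite defining set of forbidden patterns for $Z$ can be assembled from the forbidden patterns of $X$ on the first coset, a window of the local rule of the sliding-block homomorphism $f$ relating the two cosets, and the forbidden patterns of $Y$ on the second coset. Running the algorithm on this presentation produces forbidden patterns defining $Z$ uniformly on $\D$; restricting to those supported on $G \times \{2\}$ gives forbidden patterns for the image $Y$.

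The hard part is the claim that $Z$ is an avorelation, which reduces entirely to confirming that the argument of Lemma~\ref{lem:Quasigroup} survives the passage from subsets of a group to cornered shapes on $G \times \{1,2\}$. This should be routine, since that argument is purely cellwise and rests only on injectivity of group multiplication, both available in the product group $\Sigma \times \Sigma$ regardless of where the corner sits; the remaining computational points are bookkeeping that parallels the proof sketch of the preceding theorem.
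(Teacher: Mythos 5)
Your proposal is correct and follows essentially the same route as the paper: the paper's entire proof is the one-line observation that the graph of a shift-invariant continuous group homomorphism between group shifts is an avorelation, after which the preceding theorem does all the work. Your elaboration — that the graph is itself a subgroup of the product and hence a cellwise group shift, so the equal-extension-counts argument of Lemma~\ref{lem:Quasigroup} applies verbatim to cornered shapes — is exactly the ``easy verification'' the paper leaves implicit.
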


\begin{proof}
It is easy to verify that the graph of a shift-invariant continuous group homomorphism between group shifts is an avorelation.
\end{proof}

\begin{example}
\label{ex:FullShift}
Not all factors of avoshifts are avo -- in fact, avoshifts on inductive intervals are not closed under conjugacy even on the group $\Z^2$ (note that on $\Z$ they are just the SFTs, so they are closed under conjugacy). For example, take the $\Z^2$ binary full shift. Now double the alphabet and always write the $f$-image of the first track of the row below, to the second track of the present row, where $f$ is the cellular automaton from Example~\ref{ex:FreeGroupGeodesics}. As in Example~\ref{ex:FreeGroupGeodesics}, it is easy to see that the subshift is not avo for inductive intervals. \qee
\end{example}


\section{Discussion and future work}
\label{sec:Discussion}

The avoshift notion we have developed here already has strong implications, and already generalizes many of the known important properties of group shifts. However, the theory is still in its infancy, and we outline here some future directions.

\subsection{Practical implementations}
\label{sec:Practical}

One motivation of the present paper is to provide a (first approximation to a) theoretical framework for practical decision procedures for basic operations on SFTs, at least comparing them and computing their languages. We have not attempted to make the procedures efficient in the present paper (and do not even provide pseudocode), nor have we implemented them. Our hope is that they can be implemented with SAT solvers. If they end up being practical, the plan is also to fit them in the framework of \cite{SaTo23}.

For now we do not know how practical the methods are, as we have not applied them (on the computer). A particularly interesting question is how well these methods perform compared to Wang-style algorithms for deciding properties of multi-dimensional SFTs. From our experiments with the Diddy toolbox \cite{SaTo23} (which implements Wang's algorithm), our impression is that the basic algorithm of Wang from \cite{Wa61} for computing the language of a subshift using only density of periodic points is remarkably effective also for comparing SFTs when implemented with SAT solvers.

While of course multidimensional SFTs in practice tend to have periodic points and counterexamples are always carefully engineered rather than arise naturally, \emph{dense periodic points} is a stringent property that fails for many natural examples, in particular it is trivial to find pairs of SFTs whose comparison cannot be made with Wang's algorithm directly. It is indeed trivial to find examples where the avoshift theory can check noninclusion of SFTs, but Wang's algorithm cannot, even in one dimension.

This is not a true weakness of Wang style algorithms: one could say that the core idea of Wang's algorithm is that one can prove the global validity of a finite pattern, by giving a complete finitary description of a configuration, in a sufficiently simple logical framework so that one can check it belongs to the subhsift, and that finding such a description is reasonable to find. Periodic points are of course an obvious example of such structures, but one may more generally consider semilinear points (see e.g.\ \cite{SaTo22c}) possible with restrictions on the eventual periods (which Diddy has very limited support for, at present \cite{SaTo23}), and it is plausible that one can more generally use automatic structures \cite{KhMi07}. These already cover a lot of ground, and in fact we are not aware of any avoshifts that do not have dense semilinear points. Thus, we are not aware of any deep theoretical reason why Wang-style methods could not be better than avoshifts.

On the other hand, there are situations where there is an easy-to-describe way to extend any configuration on a subset, but a simple configuration can nevertheless have an incredibly complicated extension with even uncomputable properties (think of spacetime diagrams of a Turing complete reversible cellular automaton). This might intuitively suggest that there are subshifts where an avoshift-style theory can make nontrivial deductions, but truly Wang-type algorithms are a logical impossibility in the sense that global configurations do not have direct finite descriptions (at least on the level of concreteness of automatic configurations). Again, we do not have concrete examples of such a phenomenon.

There are of course also other decidability methods that the present methods should be compared to. For example, in one dimension, classical automata theory seems unrivaled in its expressivity and practicality. One can also use automata theory in two dimensions by applying classical automata theory to slices of configurations. The references \cite{JeRa15,SaTo22c} are able to prove some interesting results about concrete examples of $\Z^2$-SFTs this way.

\subsection{Extensions to be explored}

In the case of $\Z$, in Lemma~\ref{lem:ZCase} we only used one-sided inductive intervals on $\Z$ (only the infinite one, because for finite sets the avo propery is automatic). One should indeed be able to extend the present theory so that it would cover polycyclic groups and ``one-sided/signed inductive intervals''. However, constructibility fails for these sets, and one has to consider weak constructibility, meaning we do not directly well-order sets, but present them as a well-ordered union of constructible sets. We believe the theory can be easily augmented to cover this, and also in two dimensions is covers some additional examples (such as spacetime diagrams of cellular automata Proposition~\ref{prop:Spacetime}). The main reason for not attempting to write the results in this generality is that while we can extend the SFTness proofs to this case, we do not know how to extend any of the decidability theory without the property of extendability (Definition~\ref{def:Extendable}), which fails for any notion of one-sided/signed inductive intervals. These decidability aspects are what we consider the most important facet of the theory developed here.

The theory of avoshifts could be extended to subshifts on monoids like $\Z \times \N$ or $\N^2$. However, we do not know what the optimal framework is. One might consider something like ``signed polycyclic groups'' where some of the infinite dimensions are restricted to only including the positive cosets of the normal subgroup. However, requiring intermediate dimensions to be positive can lead to complications in the non-abelian case (even in simple examples like the Heisenberg group), and we do not know what the cleanest approach is.

One possible solution is to develop the theory more generally in a setting of graph subshifts, to hopefully both cover groups like $\Z \times \N$, and to make the decidability theory of avofactors in Section~\ref{sec:Avofactors} a direct corollary of the general theory instead of needing an ad hoc discussion.

\subsection{Dynamical properties}

Avoshifts as defined her lack some desirable properties. First, they are not conjugacy invariant, thus being an avoshift is itself not actually a dynamical property (except on $\Z$ where it corresponds to being an SFT). This is the case also for SFTs with safe symbols and topological strong spatial mixing subshifts, as well as $k$-TEP subshifts, which as we recall as some of classes generalized by avoshifts. However, group shifts (arguably the main class of motivating examples) can be defined purely abstractly as internal groups in the category of subshifts. While we have to some extent swept this under a rug, they are indeed avoshifts only up to conjugacy. The failure is drastic here, as full shifts admit a group shift structure, but there are subshifts conjugate to full shifts which are not avo (Example~\ref{ex:FullShift}).

The mixing properties finite extension property \cite{BrMcPa18}, as well as the related recently defined map extension property \cite{Me23} and contractibility \cite{PoSa24}, all imply dense periodic points on a large class of groups \cite{PoSa24}, and are conjugacy invariant. The finite extension property was actually introduced as a natural conjugacy-invariant generalization of the topological strong spatial mixing property. Of course, one can trivially extend the present theory to cover the class of subshifts that are avoshifts up to conjugacy, in the sense that SFTness is conjugacy-invariant, and for the decidability theory one can go through possible conjugacies and the avo property through a conjugating map can be recognized. However, it seems likely that there is a more natural approach.

Finally, in the present paper we have only concentrated on decidability results. Group shifts, $k$-TEP subshifts and TSSM subshifts also have many other desirable properties such as dense periodic points, and interesting things can be said about the invariant measures in at least the first two cases (for TSSM and SFTs with a safe symbol, see \cite{Pa14} for some information). We do not know how much one can say, in the present generality, about the dynamical properties of avoshifts (other than the fact they are SFT). 

As we have shown, avoshifts do not have dense periodic points in general. For decidability theory, as discussed in Section~\ref{sec:Practical}, this is in a sense a good thing, as we want the class to cover as much dynamical ground as possible, and dense periodic points is a restrictive property. In a future paper, our plan is to find subclasses of avoshifts with more restricted dynamics (and still covering group shifts). Specifically, for avoshifts on convex sets in $\Z^d$, and for subshifts with equal extension counts, we claim that there are non-trivial things that can be said. 

\bibliographystyle{plain}
\bibliography{../../../bib/bib}{}

\end{document}